\let\l@section\l@chapter
\newtheorem{theorem}{Theorem}
\newtheorem{corollary}{Corollary}[theorem]
\newtheorem{lemma}[theorem]{Lemma}
\newtheorem{Proposition}[theorem]{Proposition}
\newtheorem{Assumption}[theorem]{Assumption}
\newcommand{\bigO}{\mathcal{O}}
\newcommand{\hide}[1]{{}}
\newcommand{\idlow}[1]{\mathord{\mathcode`\-="702D\it #1\mathcode`\-="2200}}
\renewcommand{\paragraph}[1]{\vspace{-0.1em} \noindent \textbf{#1}}
\icmltitlerunning{Communication-Efficient Distributed Optimization with Quantized Preconditioners}
\begin{document}

\twocolumn[
\icmltitle{Communication-Efficient Distributed Optimization \\ with Quantized Preconditioners}



\icmlsetsymbol{equal}{*}

\begin{icmlauthorlist}
\icmlauthor{Foivos Alimisis}{unige}
\icmlauthor{Peter Davies}{ist}
\icmlauthor{Dan Alistarh}{ist,neurmagic}
\end{icmlauthorlist}

\icmlaffiliation{unige}{Department of Mathematics, University of Geneva, Switzerland (work done while at IST Austria)}
\icmlaffiliation{ist}{IST Austria}
\icmlaffiliation{neurmagic}{Neural Magic, US}

\icmlcorrespondingauthor{Foivos Alimisis}{Foivos.Alimisis@unige.ch}

\icmlkeywords{Distributed Optimization, Bit Complexity, Second Order Algorithms}

\vskip 0.3in
]



\printAffiliationsAndNotice{}  

\begin{abstract}
We investigate fast and communication-efficient algorithms for the classic problem of minimizing a sum of strongly convex and smooth functions  that are distributed among $n$ different nodes, which can communicate using a limited number of bits. Most previous communication-efficient approaches for this problem are limited to first-order optimization, and therefore have \emph{linear} dependence on the condition number in their communication complexity. 
We show that this dependence is not inherent: communication-efficient methods can in fact have sublinear dependence on the condition number. 
For this, we design and analyze the first communication-efficient distributed variants of preconditioned gradient descent for Generalized Linear Models, and for Newton's method. 
Our results rely on a new technique for quantizing both the preconditioner and the descent direction at each step of the algorithms, while controlling their convergence rate. 
We also validate our findings experimentally, showing fast convergence and reduced communication. 
\end{abstract}
\vspace{-4mm}
\section{Introduction}

Due to the sheer size of modern datasets, many practical instances of large-scale optimization are now \emph{distributed}, in the sense that data and computation are split among several computing nodes, which collaborate to jointly optimize the global objective function. 
This shift towards distribution induces new challenges, and many classic algorithms have been revisited to reduce  distribution costs. These costs are usually measured in terms of the number of bits sent and received by the nodes (\emph{communication complexity}) or by the number of parallel iterations required for convergence (\emph{round complexity}).

In this paper, we focus on the communication (bit) complexity of the classic empirical risk minimization problem
\vspace{-1mm}
\begin{equation*}
    \min_{x\in \mathbb{R}^d} f(x) := \frac{1}{n} \sum_{i=1}^n f_i(x),
    \vspace{-1mm}
\end{equation*}
where the global $d$-dimensional cost function $f$ is formed as the average of smooth and strongly-convex local costs $f_i$, each owned by a different machine, indexed by $i=1,...,n$. 

This problem has a rich history. 
The seminal paper of \citet{4048825} considered the case $n = 2$, and provided a lower bound of $\Omega( d \log (d / \epsilon ))$ for quadratic functions, as well as an almost-matching upper bound for this case, within logarithmic factors. (Here, $d$ is the problem dimension and $\epsilon$ is the  error-tolerance.) 

The problem has concentrated significant attention, given the surge of interest in distributed optimization and machine learning, e.g.~\cite{niu2011hogwild, jaggi2014communication, alistarh2016qsgd, nguyen2018sgd, ben2019demystifying}. In particular, a  series of  papers~\cite{khirirat2018distributed, ye2018communication, magnusson2020maintaining, alistarh2020improved} continued to provide improved upper and lower bounds for the communication complexity of this problem, both for deterministic and randomized algorithms, as well as examining related distributed settings and problems~\cite{scaman2017optimal, jordan2018communication, vempala2020communication, mendler2020randomized, hendrikx2020statistically}. 

The best known lower bound for solving the above problem for deterministic algorithms and general $d$ and $n$ is of 
\vspace{-1mm}
\[
    \Omega( n d \log ( d / \epsilon ) ) 
    \vspace{-1mm}
\]
total communication bits, given recently by~\cite{alistarh2020improved}. 
This lower bound can be asymptotically matched for quadratic functions by a quantized variant of gradient descent~\cite{magnusson2020maintaining, alistarh2020improved} using 
\vspace{-1mm}
\[
    \bigO( n d \kappa \log \kappa \log (\gamma d / \epsilon))
    \vspace{-1mm}
\]
total bits, where $\kappa$ is the condition number of the problem and $\gamma$ is the smoothness bound of $f$. 

An intriguing open question concerns the optimal dependency on the condition number for general objectives. While existing lower bounds show no such explicit dependency, all known algorithms have linear (or worse) dependency on $\kappa$. Resolving this problem is non-trivial, since one usually removes this dependency in the non-distributed case by leveraging curvature information in the form of preconditioning or full Newton steps. 
However, existing distribution techniques are designed for \emph{gradient} quantization, and it is not at all clear for instance how using a preconditioning matrix would interact with the convergence properties of the algorithm, and in particular whether favourable convergence behaviour can be preserved at all following quantization. 

\paragraph{Contribution.} In this paper, we resolve this question in the positive, and present  communication-efficient variants of preconditioned gradient descent for generalized linear models (GLMs) and distributed Newton's method.  

Specifically, given a \emph{small enough}  error-tolerance $\epsilon$, a communication-efficient variant of preconditioned gradient descent for GLMs (QPGD-GLM) can find an $\epsilon$-minimizer of a $\gamma$-smooth function using a total number of bits 
\begin{equation*}
        B^{\idlow{QPGD-GLM}} = \bigO \left(n d \kappa_{\ell} \log (n \kappa_{\ell} \kappa(M)) \log (\gamma D/\epsilon)\right), 
\end{equation*}
where $d$ is the dimension, $n$ is the number of nodes,  $\kappa_\ell$ is the condition number of the loss function $\ell$ used to measure the distance of training data from the prediction, $\kappa(M)$ is the condition number of the averaged covariance matrix of the training data, and $D$ is a bound on the initial distance from the optimum. In practice, $\kappa_{\ell}$ is often much smaller than the condition number $\kappa$ of the problem, and is equal to $1$ in the case that $\ell$ is a quadratic. 

This first result suggests that distributed methods need not have linear dependence on the condition number of the problem. 
Our main technical result extends the approach to a distributed variant of Newton's method, showing that the same problem can be solved using 
\begin{equation*}
   B^{\idlow{Newton}} = \bigO\left(nd^2 \log \left( {d} \kappa \right) \log  (\gamma \mu/\sigma \epsilon) \right) \textnormal{ total bits, }
\end{equation*}
under the assumption that the Hessian is $\sigma$-Lipschitz. 

Viewed in conjunction with the above $\Omega(nd \log(d / \varepsilon))$ lower bound, these algorithms outline a new communication complexity trade-off between the dependency on the dimension of the problem $d$, and its  condition number $\kappa$. Specifically, for ill-conditioned but low-dimensional problems, it may be advantageous to employ quantized Newton's method, whereas QPGD-GLM can be used in cases where the structure of the training data favors preconditioning. Further, our results suggest that there can be no general  communication lower bound with linear dependence on the condition number of the problem.

Our results assume the classic coordinator / parameter server~\citep{PS} model of distributed computing, in which a distinguished node acts as a coordinator by gathering model updates from the nodes. 
In this context, we introduce a few tools which should have broader applicability. 
One is a lattice-based matrix quantization technique, which extends the state-of-the-art vector (gradient) quantization techniques to  preconditioners. 
This enables us to carefully trade off the  communication compression achieved by the algorithm with the non-trivial error in the descent directions due to quantization. 
Our main technical advance is in the context of quantized Newton's method, where we need to keep track of the concentration of quantized Hessians relative to the full-precision version. Further, our algorithms quantize directly the local descent directions obtained by multiplying the inverse of the quantized estimation of the preconditioner with the exact local gradient. This is a non-obvious choice, which turns out to be the correct way to deal with quantized preconditioned methods.

We validate our theoretical results on standard regression datasets, where we show that our techniques can provide an improvement of over $3 \times$ in terms of total communication complexity used by the algorithm, while maintaining convergence and solution quality.

\paragraph{Related Work.} 
There has been a surge of interest in distributed optimization and machine learning. While a complete survey is beyond our scope, we mention the significant work on designing and analyzing communication-efficient versions of classic optimization algorithms, e.g.~\cite{jaggi2014communication, scaman2017optimal, jordan2018communication, khirirat2018distributed, nguyen2018sgd, alistarh2016qsgd, alistarh2018convergence, ye2018communication, NUQSGD,  magnusson2020maintaining, ghadikolaei2020communication}, and the growing interest in communication and round complexity lower bounds, e.g.~\cite{NIPS2013_4902, NIPS2014_5386, NIPS2015_5731, vempala2020communication, alistarh2020improved}. In this context, our work is among the first to address the bit complexity of optimization methods which explicitly employ curvature information, and shows that such methods can indeed be made communication-efficient.

\citet{4048825} gave the first upper and lower bounds for the communication (bit) complexity of distributed convex optimization, considering the case of two nodes. Their algorithm is a variant of gradient descent which performs \emph{adaptive} quantization, in the sense that nodes adapt the number of bits they send and the quantization grid depending on the iteration. 
Follow-up work, e.g.~\cite{khirirat2018distributed, alistarh2016qsgd} generalized their algorithm to an arbitrary number of nodes, and continued to improve complexity.

In this line, the work closest to ours is that of \citet{magnusson2020maintaining}, who introduce a family of adaptive gradient quantization schemes which can enable linear convergence in any norm for gradient-descent-type algorithms, in the same system setting considered in our work. 
However, we emphasize that this work did \emph{not} consider preconditioning.
(\citet{alistarh2020improved} also focus on GD, but use different quantizers and a more refined analysis to obtain truly tight communication bounds for quadratics.)


%
Conceptually, the quantization techniques we introduce serve a similar purpose---to allow the convergence properties of the algorithm to be preserved, despite noisy directional information. 
At the technical level, however, the schemes we describe and analyze are different, and arguably more complex. For instance, since only the gradient information is quantized,~\citep{magnusson2020maintaining} can use grid quantization adapted to gradient norms, whereas employ more complex quantization, as well as fine-grained bookkeeping with respect to the concentration of quantized matrices and descent directions. 

There has been significant work on distributed approximate second-order methods with the different goal of minimizing the \emph{number of communication rounds} required for convergence. 
One of the first such works is~\cite{shamir2014communication}, who considered the strongly convex case, and proposed a method called DANE, where each worker solves a subproblem using full gradients at each iteration, and the global iterate is the average of these sub-solutions. 
Follow-up work~\cite{zhang2015disco, reddi2016aide, wang2018giant, zhang2020distributed} proposed improvements  both in terms of generalizing the structure of the loss functions, but also in terms of convergence rates. 
Recently,~\citet{hendrikx2020statistically} also proposed a round-efficient distributed preconditioned accelerated gradient method for our setting, where preconditioning is done by solving a local optimization problem over a subsampled dataset at
the server. Their convergence rate depends on the square root of the relative condition number between the global and local loss functions. 

Concurrent work by~\citet{islamov2021distributed} considers the same problem of reducing the bit cost of distributed second-order optimization, and proposes a series of algorithms based on the novel idea of \emph{learning parameters of the Hessian at the optimum} in a communication-efficient manner. 
The resulting algorithms allow for $\ell_2$-regularization, and can achieve local linear and superlinear rates, independent of the condition number, with \emph{linear} communication cost per round in the dimension $d$. 

Relative to our setting, their results require two additional assumptions: The first is that, for linear communication cost, either the coordinator must have access to \emph{all the training data} at the beginning of the optimization process, or the data should be highly \emph{sparse}.   
The second assumption is  on the structure of the individual loss functions, which are weaker than the assumptions we make for our ``warm-up'' algorithm for GLMs, but stronger than the ones required for our generalized quantized Newton's method.  
Their results are therefore not directly comparable to ours, however, we note that our communication cost should be lower in e.g. the case where the data is dense and the number of points $m$ is larger than the dimension $d$. The algorithmic techniques are rather different. 
Follow-up work extended their approach to the federated learning setting~\citep{safaryan2021fednl}.

\vspace{-2mm}
\section{Preliminaries}
\label{sec:background}

\paragraph{Distributed Setting.}
As discussed, we are in a standard distributed optimization setting, where we have $n$ nodes, and each node $i$ has its own local cost function $f_i: \mathbb{R}^d \rightarrow \mathbb{R}$ (where $d$ is the dimension of the problem). We wish to minimize the average cost 
$f=\frac{1}{n} \sum_{i=1}^n f_i$
and, for that, some communication between nodes is required. We denote the (unique) minimizer of $f$ by $x^*$ and the (unique) minimizer of each $f_i$ by $x_i^*$ (minimizers are unique since these functions are assumed to be strongly convex). Communication may be performed over various network topologies, but in this work we assume a simple structure where an arbitrary node plays the role of the central server, i.e. receives messages from the others, processes them, and finally sends the result back to all. (Such topologies are also common in practice~\cite{PS}.)
Then, the nodes compute an update based on their local cost, and subsequently transmit this information again to the master, repeating the pattern until convergence. 

The two main usually considered complexity metrics are the total number of rounds, or iterations, which the algorithm requires, and the total number of bits transmitted. 
In this paper, we focus on the latter metric, and assume that nodes cannot communicate their information with infinite precision, but instead aim to limit the number of bits that each node can use to encode messages. Thus, we measure complexity in terms of the total number of bits that the optimization algorithm needs to use, in order to minimize $f$ within some accuracy.

\paragraph{Matrix Vectorization.} One of the main technical tools of our work is quantization of matrices. All the matrices that we care to quantize turn out to be symmetric. The first step for quantizing is to vectorize them. We do so by using the mapping
\vspace{-1mm}
\begin{equation*}
    \phi: \mathbb{S}(d) \rightarrow \mathbb{R}^{\frac{d(d+1)}{2}}
\end{equation*}
\vspace{-1mm}
defined by
\begin{equation*}
    \phi(P)=(p_{11},...,p_{1d},p_{22},...,p_{2d},...,p_{dd}),
    \vspace{-1mm}
\end{equation*}
where $P=(p_{ij})_{i,j=1}^d$ and $\mathbb{S}(d)$ is the space of $d \times d$ symmetric matrices. Thus, the mapping $\phi$ just isolates the upper triangle of a symmetric matrix and writes it as a vector. It is direct to check that $\phi$ is a linear isomorphism ($\textnormal{dim}(\mathbb{S}(d))=d(d+1)/2$).
\newline
We can now bound the deformation of distances produced by this mapping for the $\ell_2$ norm in $\mathbb{S}(d)$ and the $\ell_2$ one in $\mathbb{R}^{\frac{d(d+1)}{2}}$:
\begin{restatable}{lemma}{basic}
\label{le:norm_distortion}
For any matrices $P,P' \in \mathbb{S}(d)$, we have
\begin{equation*}
    \frac{1}{\sqrt{d}}\| \phi(P)-\phi(P') \|_2 \leq \|P-P'\|_2 \leq \sqrt{2} \| \phi(P)-\phi(P') \|_2.
\end{equation*}
\label{prop:basic}
\end{restatable}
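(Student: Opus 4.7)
The plan is to relate both $\|P-P'\|_2$ (spectral norm) and $\|\phi(P)-\phi(P')\|_2$ (Euclidean norm) to the Frobenius norm $\|P-P'\|_F$, which will serve as a convenient intermediary. Set $A := P-P' \in \mathbb{S}(d)$ and write $A = (a_{ij})$.

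First I would expand both norms entrywise. By the definition of $\phi$,
\begin{equation*}
    \|\phi(A)\|_2^{\,2} \;=\; \sum_{i=1}^d a_{ii}^{\,2} + \sum_{i<j} a_{ij}^{\,2},
\end{equation*}
whereas by symmetry $a_{ij}=a_{ji}$,
\begin{equation*}
    \|A\|_F^{\,2} \;=\; \sum_{i,j} a_{ij}^{\,2} \;=\; \sum_{i=1}^d a_{ii}^{\,2} + 2\sum_{i<j} a_{ij}^{\,2}.
\end{equation*}
Comparing these two expressions term by term gives the sandwich
\begin{equation*}
    \|\phi(A)\|_2 \;\leq\; \|A\|_F \;\leq\; \sqrt{2}\,\|\phi(A)\|_2.
\end{equation*}

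Next I would invoke the standard inequalities between spectral and Frobenius norm. Since $A$ is $d\times d$ it has at most $d$ nonzero singular values, so
\begin{equation*}
    \|A\|_2 \;\leq\; \|A\|_F \;\leq\; \sqrt{d}\,\|A\|_2.
\end{equation*}
Chaining the two displayed pairs then yields both inequalities of the lemma: the upper bound is $\|A\|_2 \leq \|A\|_F \leq \sqrt{2}\,\|\phi(A)\|_2$, and the lower bound follows from $\|\phi(A)\|_2 \leq \|A\|_F \leq \sqrt{d}\,\|A\|_2$ by dividing through by $\sqrt{d}$. Resubstituting $A=P-P'$ gives the claim.

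There is no real obstacle here: the argument is entirely a bookkeeping exercise in elementary norm comparisons. The only mildly delicate point is remembering that the off-diagonal entries of $A$ contribute twice to $\|A\|_F^2$ but only once to $\|\phi(A)\|_2^2$, which is precisely what produces the factor $\sqrt{2}$ on the right. The factor $\sqrt{d}$ on the left is purely the usual loss incurred when bounding a Frobenius norm by a spectral norm on a $d$-dimensional matrix.
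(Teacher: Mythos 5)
Your proof is correct and takes essentially the same route as the paper's: both compare the two norms to the Frobenius norm of $P-P'$, obtaining the $\sqrt{2}$ factor from the fact that off-diagonal entries are counted twice in $\|\cdot\|_F^2$ but once in $\|\phi(\cdot)\|_2^2$, and the $\sqrt{d}$ factor from the standard spectral--Frobenius comparison. No gaps.
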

\vspace{-4mm}
The proof can be found in Appendix \ref{app:isomorphism}.
\newline
We will use the isomorphism $\phi$ later in our applications to Generalized Linear Models and Newton's method. This is the reason of appearance of the extra $d$ \textit{inside a logarithm} in our upper bounds. From now on we use $\| \cdot \|$ to denote the $\ell_2$ norm of either vectors or matrices.

\paragraph{Lattice Quantization.}
For estimating the gradient and Hessian in a distributed manner with limited communication, we use a quantization procedure developed in \cite{davies2021new}. The original quantization scheme involves randomness, but we use a \textit{deterministic} version of it, by picking up the closest point to the vector that we want to encode. This is similar to the quantization scheme used by \cite{alistarh2020improved} for standard gradient descent, and has the following properties:
\begin{Proposition} \cite{davies2021new,alistarh2020improved}
\label{lattice_quantization}
Denoting by $b$ the number of bits that each machine uses to communicate, there exists a quantization function
\vspace{-1mm}
\begin{equation*}
    Q: \mathbb{R}^d \times \mathbb{R}^d\times 	\mathbb{R_+} \times \mathbb{R_+} \rightarrow \mathbb{R}^d,
    \vspace{-1mm}
\end{equation*}
which, for each $\epsilon,y>0$, 
consists of an encoding function
$\textnormal{enc}_{\epsilon,y}:\mathbb{R}^d \rightarrow \lbrace 0,1 \rbrace ^b$ and a decoding one $\textnormal{dec}_{\epsilon,y}:\lbrace 0,1 \rbrace ^b \times \mathbb{R}^d \rightarrow \mathbb{R}^d$, such that, for all $x, x' \in \mathbb{R}^d$,
\vspace{-2mm}
\begin{itemize}
    \item $\textnormal{dec}_{\epsilon,y} (\textnormal{enc}_{\epsilon,y}(x),x') = Q(x,x', y ,\epsilon)$, if $\|x-x'\| \leq y$.
    \vspace{-6mm}
    \item $\|Q(x,x',y,\epsilon)-x\| \leq \epsilon$, if $\|x-x'\| \leq y$.
    \vspace{-2mm}
    \item If $y/\epsilon>1$, the cost of the quantization procedure in number of bits satisfies $b= \bigO(d \textnormal{log}_2 \left(\frac{y}{\epsilon})\right)$.
\end{itemize}
\end{Proposition}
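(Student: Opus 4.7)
The plan is to realize $Q$ via a deterministic lattice-based code of the kind used in \cite{davies2021new, alistarh2020improved}, with the randomized choice of lattice point replaced by the nearest-point rule. Both the encoder and decoder share $x'$, and the promise $\|x-x'\| \leq y$ localizes $x$ inside the closed ball $B(x',y) \subseteq \mathbb{R}^d$. Fix any lattice $\Lambda \subseteq \mathbb{R}^d$ with covering radius at most $\epsilon$ and fundamental-cell volume $\Theta(\epsilon^d)$: the scaled integer lattice $\Lambda = (\epsilon/\sqrt{d})\,\mathbb{Z}^d$ suffices, since its covering radius is $\epsilon/2$ and its fundamental volume is $(\epsilon/\sqrt{d})^d$. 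More efficient lattices can improve the hidden constants but not the asymptotics.

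The encoder/decoder protocol is then as follows. Compute, from $x'$, $y$ and $\epsilon$ alone, a canonical enumeration of the finite set $S := \Lambda \cap B(x', y+\epsilon)$. Given $x$, define $\mathrm{enc}_{\epsilon,y}(x)$ to output the index within $S$ of a closest lattice point $q^\star \in S$ to $x$ (ties broken by the canonical order). The decoder reads this index, reconstructs the same enumeration of $S$ from $x'$, and returns the corresponding $q^\star$. Setting $Q(x,x',y,\epsilon) := q^\star$, the first two bullets are immediate: the decoded value coincides with the encoder's choice whenever $\|x-x'\|\leq y$, and $\|Q(x,x',y,\epsilon)-x\| \leq \epsilon$ because $x$ lies within the covering radius of some point of $\Lambda$, which necessarily belongs to $S$ when $\|x-x'\|\leq y$.

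The only genuinely non-trivial step is bounding the bit cost, i.e.\ showing $|S| = \mathcal{O}((y/\epsilon)^d)$ so that $b = \lceil \log_2 |S|\rceil = \mathcal{O}(d \log_2(y/\epsilon))$ when $y/\epsilon>1$. This follows by a standard volume packing argument: the translates of a fundamental cell $F$ of $\Lambda$ centered at the points of $S$ are disjoint and all contained in the slightly enlarged ball $B(x', y+\epsilon+\mathrm{diam}(F))$, so
\begin{equation*}
  |S| \;\leq\; \frac{\mathrm{vol}\bigl(B(x', y+2\epsilon)\bigr)}{\mathrm{vol}(F)} \;=\; C_d \left(\frac{y+2\epsilon}{\epsilon}\right)^{\!d},
\end{equation*}
for a constant $C_d$ depending only on $d$. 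Under the hypothesis $y/\epsilon > 1$, this is $\mathcal{O}((y/\epsilon)^d)$ and, absorbing $\log C_d$ into the $\mathcal{O}(d\log(y/\epsilon))$ term, yields the claimed bit bound. The remaining details are bookkeeping; an equivalent construction is worked out in the cited references, from which our statement follows by replacing randomized rounding with deterministic nearest-point decoding.
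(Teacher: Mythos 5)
There is a genuine gap, and it sits exactly at the point your proposal treats as routine. The proposition's encoding map has signature $\textnormal{enc}_{\epsilon,y}:\mathbb{R}^d \rightarrow \{0,1\}^b$: the encoder sees only $x$ (and the public parameters $\epsilon, y$), while the reference point $x'$ is available \emph{only to the decoder}. This asymmetry is the entire content of the statement and reflects how $Q$ is used in the paper: node $i$ encodes $\phi(M_i)$ or $\bar M^{-1}\nabla f_i(x^{(t)})$ without knowing the master's reference $\phi(M_{i_0})$ or $v_i^{(t)}$. Your scheme has the encoder ``compute, from $x'$, $y$ and $\epsilon$ alone, a canonical enumeration of $S := \Lambda \cap B(x', y+\epsilon)$'' and transmit an index into $S$ — but $S$ depends on $x'$, so the encoder cannot form it. If both parties knew $x'$ the problem would indeed reduce to indexing a shared $\epsilon$-net of a ball, which is why the construction looks easy from where you stand; the difficulty the cited works actually solve is achieving the same bit count \emph{without} the encoder knowing where that ball is centered.

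The standard fix (and what \citet{davies2021new} do, as the experimental section's reference to a ``cubic lattice with $\bmod$-based coloring'' indicates) is: fix a lattice $\Lambda$ with covering radius at most $\epsilon$, and a coloring of $\Lambda$ with $N = \bigO((y/\epsilon)^d)$ colors such that any two lattice points of the same color are at distance greater than $2(y+\epsilon)$ (for $(\epsilon/\sqrt d)\mathbb{Z}^d$, reduce each coordinate index modulo $q = \Theta(\sqrt{d}\,y/\epsilon)$). The encoder rounds $x$ to its nearest lattice point $\lambda(x)$ and sends only its \emph{color}, costing $\log_2 N = \bigO(d\log_2(y/\epsilon))$ bits; the decoder outputs the unique lattice point of that color within distance $y+\epsilon$ of $x'$, which must be $\lambda(x)$ by the separation property together with the promise $\|x-x'\|\le y$ and $\|\lambda(x)-x\|\le\epsilon$. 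Your volume-packing bound on $|S|$ is essentially the correct count of how many colors are needed, so the quantitative part of your argument survives; it is the indexing mechanism that must be replaced by a coloring that the encoder can evaluate locally. (A smaller quibble: absorbing the $\bigO(d)$ additive term into $\bigO(d\log_2(y/\epsilon))$ requires $y/\epsilon$ bounded away from $1$, though the proposition as stated shares this looseness.)
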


\section{Quantized Preconditioned Gradient Descent for GLMs}
\label{sec:GLMs}

As a warm-up, we consider the case of a Generalized Linear Model (GLM) with data matrix $A \in \mathbb{R}^{m \times d}$. GLMs are particularly attractive models to distribute, because the distribution across nodes can
be performed naturally by partitioning the available data. For more background on distributing GLMs  see~\citep{mendler2020randomized}. 

The matrix $A$ consists of the data used for training in its rows, i.e. we have $m$-many $d$-dimensional data points. As is custom in regression analysis, we assume that $m \gg d$, i.e. we are in the case of big but low-dimensional data. If $m$ is very large, it can be very difficult to store the whole matrix $A$ in one node, so we distribute it in $n$-many nodes, each one owning $m_i$-many data points ($m=\sum_{i=1}^n m_i)$. We pack the data owned by node $i$ in a matrix $A_i \in \mathbb{R}^{m_i \times d}$ and denote the function used to measure the error on machine $i$ by $\ell_i: \mathbb{R}^{m_i} \rightarrow \mathbb{R}$. Then the local cost function $f_i:\mathbb{R}^d \rightarrow \mathbb{R}$ at machine $i$ reads
\vspace{-1mm}
\begin{equation*}
    f_i(x)=\ell_i(A_ix).
    \vspace{-1mm}
\end{equation*}
We can express the global cost function $f$ in the form
\vspace{-1mm}
\begin{equation*}
    f(x)=\ell(Ax)
    \vspace{-1mm}
\end{equation*}
where $\ell:\mathbb{R}^m \rightarrow \mathbb{R}$ is a global loss function defined by
\vspace{-1mm}
\begin{equation*}
    \ell(y)=\frac{1}{n} \sum_{i=1}^n \ell_i(y_i),
    \vspace{-1mm}
\end{equation*}
where $y_i$ are sets of $m_i$-many coordinates of $y$ obtained by the same data partitioning.
\begin{Assumption}
The local loss functions $\ell_i$ are $\mu_{\ell}$-strongly convex and $\gamma_{\ell}$-smooth.
\end{Assumption}
This assumption implies that the global loss function $\ell$ is $\frac{\mu_{\ell}}{n} $-strongly convex and $\frac{\gamma_{\ell}}{n}$-smooth. This is because the Hessian of $\ell$ has the block-diagonal structure
\begin{equation*}
    \nabla_y^2 \ell(y)=\frac{1}{n} \textnormal{diag} \left(\nabla_{y_1}^2 \ell_1(y_1),..., \nabla_{y_n}^2 \ell_n(y_n) \right)
\end{equation*}
and the eigenvalues of all matrices $\nabla_{y_i}^2 \ell_i(y_i)$ are between $\mu_{\ell}$ and $\gamma_{\ell}$. The Hessian of $f$ can be written as
\begin{equation*}
    \nabla^2 f(x)=A^T \nabla^2 \ell(Ax) A \in \mathbb{S}(d) \subseteq \mathbb{R}^{d \times d}.
\end{equation*}
We detail the computation of $\nabla^2 f$ in Appendix \ref{app:GLM_technicalities}.

\begin{Assumption}
The matrix $A \in \mathbb{R}^{m \times d}$ is of full rank (i.e. $rank(A)=d$, since $d<m$).
\end{Assumption}
This assumption is natural: if two columns of the matrix $A$ were linearly dependent, we would not need both the related features in our statistical model. Practically, we can prune one of them and get a new data matrix of full-rank. 
\begin{restatable}{Proposition} {improvedconditionnumber}
\label{prop:improved_condition_number}
The maximum eigenvalue $\lambda_{max}$ of $\nabla^2 f$ satisfies
\vspace{-1mm}
\begin{equation*}
    \gamma:=\lambda_{max}(\nabla^2 f) \leq \gamma_{\ell} \lambda_{max}\left(\frac{A^T A}{n} \right)
\end{equation*}
\vspace{-1mm}
and the minimum eigenvalue $\lambda_{min}$ of $\nabla^2 f$ satisfies
\vspace{-1mm}
\begin{equation*}
    \mu:=\lambda_{min}(\nabla^2 f) \geq \mu_{\ell} \lambda_{min}\left(\frac{A^T A}{n} \right) .
\end{equation*}
\end{restatable}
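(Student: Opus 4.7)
The plan is to exploit the sandwich inequality on Hessians implied by strong convexity and smoothness of the $\ell_i$, then conjugate by $A$ to obtain the corresponding sandwich for $\nabla^2 f$.

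First, I would note that the block-diagonal form
\begin{equation*}
    \nabla_y^2 \ell(y)=\frac{1}{n} \mathrm{diag}\bigl(\nabla_{y_1}^2 \ell_1(y_1),\dots,\nabla_{y_n}^2 \ell_n(y_n)\bigr)
\end{equation*}
combined with the assumption that each $\nabla_{y_i}^2 \ell_i(y_i)$ has spectrum contained in $[\mu_\ell,\gamma_\ell]$ (by strong convexity/smoothness of $\ell_i$) implies the Loewner inequality
\begin{equation*}
    \frac{\mu_\ell}{n}\, I_m \preceq \nabla^2 \ell(Ax) \preceq \frac{\gamma_\ell}{n}\, I_m
\end{equation*}
for every $x \in \mathbb{R}^d$, where $I_m$ is the $m \times m$ identity.

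Next, I would conjugate the above by $A$ on the right and $A^T$ on the left. Since conjugation by $A$ preserves the Loewner order, this yields
\begin{equation*}
    \frac{\mu_\ell}{n}\, A^T A \preceq A^T \nabla^2 \ell(Ax) A \preceq \frac{\gamma_\ell}{n}\, A^T A,
\end{equation*}
i.e.\ $\frac{\mu_\ell}{n} A^T A \preceq \nabla^2 f(x) \preceq \frac{\gamma_\ell}{n} A^T A$. Taking the largest eigenvalue on both sides of the upper bound (which preserves Loewner inequalities) gives $\lambda_{\max}(\nabla^2 f) \leq \frac{\gamma_\ell}{n} \lambda_{\max}(A^T A) = \gamma_\ell \lambda_{\max}(A^T A/n)$, and the analogous monotonicity applied to the lower bound yields $\lambda_{\min}(\nabla^2 f) \geq \mu_\ell \lambda_{\min}(A^T A/n)$. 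Equivalently, I can argue via the Rayleigh quotient: for any unit vector $v \in \mathbb{R}^d$, $v^T \nabla^2 f(x) v$ equals $(Av)^T \nabla^2 \ell(Ax) (Av)$, which lies between $\frac{\mu_\ell}{n}\|Av\|^2$ and $\frac{\gamma_\ell}{n}\|Av\|^2$, and then optimizing $\|Av\|^2$ over unit $v$ recovers the claimed bounds.

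The only subtle point is the lower bound: it is informative only if $\lambda_{\min}(A^T A) > 0$, which is precisely where Assumption 2 (the full-rank hypothesis on $A$) enters, since $\mathrm{rank}(A)=d$ guarantees $A^T A \succ 0$. Apart from this, the proof is a routine application of the Loewner order and the Courant--Fischer variational characterization of eigenvalues, so I do not anticipate a genuine technical obstacle. The main thing to be careful about is tracking the factor $1/n$ coming from the definition of $\ell$ as an average, which is what produces the $A^T A/n$ (rather than $A^T A$) appearing inside the eigenvalues in the statement.
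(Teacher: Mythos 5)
Your proof is correct, but it follows a genuinely different route from the paper's. You argue via the Loewner order: the block-diagonal structure gives $\frac{\mu_\ell}{n} I_m \preceq \nabla^2\ell(Ax) \preceq \frac{\gamma_\ell}{n} I_m$, congruence by $A$ gives $\frac{\mu_\ell}{n}A^TA \preceq \nabla^2 f(x) \preceq \frac{\gamma_\ell}{n}A^TA$, and monotonicity of eigenvalues under the Loewner order (equivalently, the Rayleigh-quotient computation you sketch) yields both bounds; the full-rank assumption enters only to make the lower bound nontrivial. The paper instead first invokes Lemma \ref{le:eig commute} to identify the spectrum of the $d\times d$ matrix $A^T\nabla^2\ell(Ax)A$ with the nonzero spectrum of the $m\times m$ matrix $\nabla^2\ell(Ax)\,AA^T$, uses the rank count from Corollary \ref{le:rank} to locate the minimum nonzero eigenvalue at position $m-d+1$, and then applies the product-eigenvalue bound of Lemma \ref{le:eig of product}. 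Your argument is shorter and more elementary for this particular statement, since it never leaves the $d$-dimensional symmetric setting. What the paper's heavier machinery buys is reuse: Lemmas \ref{le:eig commute} and \ref{le:eig of product} are needed again in Proposition \ref{prop:exact_precond} and Lemma \ref{le:inexact_precond} to control the spectrum of the non-symmetric product $M^{-1}\nabla^2 f(x)$, where a direct congruence argument does not apply as immediately. Both proofs are valid; no gap in yours.
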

\vspace{-1mm}
The proof is presented in Appendix \ref{app:GLM_technicalities}.
Thus, we have that the condition number $\kappa$ of our minimization problem satisfies
\vspace{-1mm}
\begin{equation*}
    \kappa \leq \kappa_{\ell} \kappa\left(\frac{A^T A}{n} \right),
    \vspace{-1mm}
\end{equation*}
where $\kappa\left(\frac{A^T A}{n} \right)$ is the condition number of the covariance matrix $A^T A$ averaged in the number of machines. 
The convergence rate of gradient descent generally depends on $\kappa$, which can be much larger than $\kappa_{\ell}$ in case that the condition number of $A^T A$ is large. The usual way to get rid of $\kappa\left(\frac{A^T A}{n} \right)$ is to precondition gradient descent using $\frac{A^T A}{n}$, which we denote by $M$ from now on (we recall the convergence analysis of this method in Appendix \ref{app:precond_gradient}). In our setting $M$ is not known to all machines simultaneously, since each machine owns only a part of the overall data. However, we observe that
\vspace{-1mm}
\begin{equation*}
    M= \frac{1}{n} \sum_{i=1}^n A_i^T A_i,
    \vspace{-1mm}
\end{equation*}
where $A_i^T A_i =: M_i$ is the local covariance matrix of the data owned by the node $i$. 

\subsection{The Algorithm}
In this section we present our QPGD-GLM algorithm and study its communication complexity. 
We structure the algorithm in four steps: first, we describe how to recover a quantized version of the averaged covariance matrices. 
Then, we describe how nodes perform initialization. Next, we describe how nodes can quantize the initial descent direction.
Finally, we describe how to quantize the descent directions for subsequent steps. Our notation for quantization operations follows Section~\ref{sec:background}. 

\noindent\rule[0.5ex]{\linewidth}{1pt}
\vspace{-8mm}
\begin{enumerate}
\item Choose an arbitrary master node, say $i_0$.
\subsection*{(A) Averaged Covariance Matrix Quantization:}
\item Compute $M_i:=A_i^T A_i$ in each node.
\vspace{-2mm}
\item Encode $M_i$ in each node $i$ and decode it in the master node using its information:
\vspace{-2mm}
\begin{align*}
\resizebox{0.9\hsize}{!}{$\bar M_i = \phi^{-1}\left(Q\left(\phi(M_i), \phi(M_{i_0}) , 2 \sqrt{d} n \lambda_{max}(M) , \frac{\lambda_{min}(M)}{16 \sqrt{2} \kappa_{\ell}} \right) \right)$}.
\end{align*}
\vspace{-5mm}

In detail, we first transform the local matrix $M_i$ via the isomorphism $\phi$,  and then quantize it via $Q$, with carefully-set parameters. The matrix will be then de-quantized relative to the master's reference point $\phi(M_{i_0})$, and then re-constituted (in approximate form) via the inverse isomorphism. 

\item Average the decoded matrices in the master node:
\newline
$S=\frac{1}{n} \sum_{i=1}^n \bar M_i$.
\vspace{-2mm}
\item Encode the average in the master node and decode in each node $i$ using its local information 
\vspace{-2mm}
\begin{equation*}
\resizebox{0.9\hsize}{!}{
   $\bar M=\phi^{-1} \left(Q(\phi(S),\phi(M_i), \sqrt{d} \left( \frac{\lambda_{min}(M)}{16 \kappa_{\ell}}+2 n \lambda_{max}(M) \right),\frac{\lambda_{min}(M)}{16 \sqrt{2} \kappa_{\ell}}) \right)$}. 
\end{equation*}
\vspace{-8mm}

\subsection*{(B) Starting Point and Parameters for Descent Direction Quantization:}
\item Choose $D>0$ and $x^{(0)} \in \mathbb{R}^d$, such that 
\begin{equation*}
   \max_i \lbrace \|x^{(0)}-x^*\| , \|x^{(0)}-x_i^*\| \rbrace \leq D.
\end{equation*}
 
\vspace{-2mm}
\item Define the parameters
\vspace{-2mm}
\begin{align*}
   & \xi:=1-\frac{1}{2 \kappa_{\ell}}, K:=\frac{2}{\xi}, \delta:=\frac{\xi(1-\xi)}{4},\\
   &  R^{(t)}:= \frac{\gamma_{\ell}}{2} K \left(1-\frac{1}{4 \kappa_{\ell}} \right)^t D.
\end{align*}
\vspace{-8mm}
\subsection*{(C) Quantizing the Initial Descent Direction:}
\item Compute $\bar M^{-1} \nabla f_i(x^{(0)})$ in each node.
\vspace{-2mm}
\item Encode $\bar M^{-1} \nabla f_i(x^{(0)})$ in each node and decode it in the master node using its local information:
\vspace{-2mm}
\begin{equation*}
    \resizebox{0.9 \hsize}{!}{$v_i^{(0)}=Q\left(\bar M^{-1} \nabla f_i(x^{(0)}),\bar M^{-1} \nabla f_{i_0}(x^{0}),4 n \kappa(M) R^{(0)},\frac{\delta R^{(0)}}{2} \right)$}.
\end{equation*}
\vspace{-8mm}
\item Average the quantized local information in the master node:
\newline
$r^{(0)}=\frac{1}{n} \sum_{i=1}^n v_i^{(0)}$.
\vspace{-2mm}
\item Encode $r^{(0)}$ in the master node and decode it in each machine $i$ using its local information:
\vspace{-2mm}
\begin{equation*}
   \resizebox{0.9 \hsize}{!}{$v^{(0)}=Q \left(r^{(0)},\bar M^{-1} \nabla f_i(x^{(0)}),\left(\frac{\delta}{2}+4 n \kappa(M) \right) R^{(0)},\frac{\delta R^{(0)}}{2} \right)$} .
\end{equation*}
\textbf{For} $t \geq 0$: 
\item Compute 
\vspace{-2mm}
\begin{equation*}
    x^{(t+1)}=x^{(t)}- \eta v^{(t)} 
\end{equation*}
\vspace{-2mm}
for $\eta>0$.
\subsection*{(D) Descent Direction Quantization for Next Steps:}
\item Encode $\bar M^{-1} \nabla f_i(x^{(t)})$ in each node $i$ and decode in the master node using the previous local estimate:
\vspace{-2mm}
\begin{equation*}
    \resizebox{0.9 \hsize}{!}{$v_i^{(t+1)}=Q \left(\bar M^{-1} \nabla f_i(x^{(t+1)} \right), v_i^{(t)},4 n \kappa(M) R^{(t+1)}, \frac{\delta R^{(t+1)}}{2})$}.
\end{equation*}
\vspace{-8mm}
\item Average the quantized local information:
\newline
$r^{(t+1)}=\frac{1}{n} \sum_{i=1}^n v_i^{(t+1)}$.
\vspace{-2mm}
\item Encode $r^{(t+1)}$ in the master node and decode it in each node using the previous global estimate:
\begin{equation*}
    \resizebox{0.9 \hsize}{!}{$v^{(t+1)}=Q \left(r^{(t+1)},v^{(t)}, \left(\frac{\delta}{2}+4 n \kappa(M) \right) R^{(t+1)}, \frac{\delta R^{(t+1)}}{2} \right)$}.
\end{equation*}
\end{enumerate}
\vspace{-4mm}
\noindent\rule[0.5ex]{\linewidth}{1pt}

We now discuss the algorithm's assumptions. First, we assume that an over-approximation $D$ for the distance of the initialization from the minimizer is known. This is practical, especially in the case of GLMs: since the loss functions $\ell_i$ are often quadratics, we can use strong convexity and write
\vspace{-1mm}
\begin{equation*}
    \|x^{(0)}-x^*\|^2 \leq \frac{2}{\mu} (f(x^{(0)})-f^*) \leq \frac{2}{\mu} f(x^{(0)})=:D^2.
    \vspace{-1mm}
\end{equation*}
and similarly for $\|x^{(0)}-x_i^*\|^2$. Further, following \citet{magnusson2020maintaining} (Assumption 2, page 5), the value $f(x^{(0)})$ is often available, for example in the case of logistic regression. Of course, if we are restricted in a compact domain as is the case of \cite{4048825} and \cite{alistarh2020improved}, then the domain itself provides an over approximation for all the distances inside it.
\newline
The parameters $\lambda_{max}(M), \lambda_{min}(M)$ used for quantization of the matrix $M$ are usually assumed to be known. Specifically, it is common in distributed optimization to assume that all nodes know estimates of the smoothness and strong convexity constants of each of the local cost functions \cite{4048825}. In our case this would imply knowing all $\lambda_{max}(M_i),\lambda_{min}(M_i)$. However, we assume knowledge of just $\lambda_{max}(M)$ and $\lambda_{min}(M)$. This also explains the appearance of the extra $\log n$ factor in our GLM bounds, relative to those for Newton's method.
\newline
The convergence and communication complexity of our algorithm are described in the following theorem:
\begin{tcolorbox}
\begin{restatable}{theorem}{convergenceGLM}
\label{thm:convergence_GLM}
The iterates $x^{(t)}$ produced by the previous algorithm with $\eta=\frac{2}{\mu_{\ell}+\gamma_{\ell}}$ satisfy
\begin{align*}
    \| x^{(t)}-x^* \| \leq \left(1-\frac{1}{4\kappa_{\ell}}\right)^t D
    \end{align*}
    and the total number of bits used for communication until $f(x^{(t)})-f^* \leq \epsilon$ is
    \begin{align}
    \label{eq:communication_glm}
    \begin{split}
        &\bigO \left(n d^2 \log \left(\sqrt{d} n \kappa_{\ell} \kappa(M) \right) \right) + \\ & \bigO \left(n d \kappa_{\ell} \log (n \kappa_{\ell} \kappa(M)) \log \frac{\gamma D^2}{\epsilon}\right).
        \end{split}
    \end{align}
\end{restatable}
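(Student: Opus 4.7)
The plan is to prove, by induction on $t$, the joint invariant $\|x^{(t)}-x^*\| \le (1-1/(4\kappa_\ell))^t D$ together with $\|v^{(t)} - \bar M^{-1}\nabla f(x^{(t)})\| \le \delta R^{(t)}$. Before touching the iterates I would verify that the two matrix-quantization rounds in block~(A) respect the radius requirement of Proposition~\ref{lattice_quantization}: using $\|M_i\| \le n\lambda_{\max}(M)$ and Lemma~\ref{le:norm_distortion} one gets $\|\phi(M_i)-\phi(M_{i_0})\|_2 \le \sqrt{d}\,\|M_i-M_{i_0}\| \le 2\sqrt{d}\,n\lambda_{\max}(M)$, and analogously for the second round. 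Inverting $\phi$ introduces an extra $\sqrt 2$ factor via Lemma~\ref{le:norm_distortion}, yielding $\|\bar M - M\| \le \lambda_{\min}(M)/(8\kappa_\ell)$. By Weyl's inequality this forces $\bar M \succeq (1 - 1/(8\kappa_\ell)) M$, so $\bar M^{-1}$ behaves essentially like $M^{-1}$ as a preconditioner for $\nabla^2 f$.

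Second, I would analyze the exact preconditioned step $x^+ = x - \eta \bar M^{-1}\nabla f(x)$. Writing $\nabla f(x^{(t)}) = \bar H^{(t)}(x^{(t)}-x^*)$ with integrated Hessian $\bar H^{(t)} = \int_0^1 \nabla^2 f(x^*+s(x^{(t)}-x^*))\,ds$, Proposition~\ref{prop:improved_condition_number} together with the bound on $\|\bar M-M\|$ confines the eigenvalues of $\bar M^{-1}\bar H^{(t)}$ to a tiny enlargement of $[\mu_\ell,\gamma_\ell]$, so with $\eta = 2/(\mu_\ell+\gamma_\ell)$ the matrix $I-\eta\bar M^{-1}\bar H^{(t)}$ has spectral norm at most $\xi = 1-1/(2\kappa_\ell)$ (the standard argument recalled in Appendix~\ref{app:precond_gradient}). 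The perturbed iterate therefore satisfies
\begin{equation*}
\|x^{(t+1)} - x^*\| \le \xi\|x^{(t)}-x^*\| + \eta\delta R^{(t)},
\end{equation*}
and substituting $R^{(t)} = (\gamma_\ell/2) K (1-1/(4\kappa_\ell))^t D$, $K=2/\xi$, $\delta=\xi(1-\xi)/4$, and using $\eta\gamma_\ell \le 2$, closes the iterate invariant at $(1-1/(4\kappa_\ell))^{t+1} D$.

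The main obstacle is the inductive verification of the descent-direction invariant, i.e.\ that both calls to $Q$ in block~(D) succeed at step $t+1$. For the node-to-master call, I must bound $\|\bar M^{-1}\nabla f_i(x^{(t+1)}) - v_i^{(t)}\|$ by $4n\kappa(M) R^{(t+1)}$. This follows by combining (i) the previous-step quantization error $\|v_i^{(t)} - \bar M^{-1}\nabla f_i(x^{(t)})\| \le \delta R^{(t)}/2$ from the inductive hypothesis; (ii) the per-node smoothness bound $\|\nabla f_i(x^{(t+1)}) - \nabla f_i(x^{(t)})\| \le n\gamma_\ell\|x^{(t+1)}-x^{(t)}\|$ coming from $\lambda_{\max}(M_i)\le n\lambda_{\max}(M)$; (iii) $\|\bar M^{-1}\| \le 2/\lambda_{\min}(M)$; and (iv) $\|x^{(t+1)}-x^{(t)}\| = \eta\|v^{(t)}\|$ controlled via the iterate invariant. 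The product $n\kappa(M)$ emerges precisely from (ii) combined with (iii). A symmetric averaging plus triangle-inequality argument handles the master-to-node call, after which Proposition~\ref{lattice_quantization} supplies $\|v^{(t+1)} - \bar M^{-1}\nabla f(x^{(t+1)})\| \le \delta R^{(t+1)}$ and closes the induction.

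Finally, the communication bound \eqref{eq:communication_glm} is obtained by summing the per-call cost of Proposition~\ref{lattice_quantization}: each of the $2n$ matrix calls in block~(A) uses $\bigO(d^2 \log(\sqrt d\,n\kappa_\ell\kappa(M)))$ bits (first term); each gradient-direction call per node uses $\bigO(d\log(n\kappa_\ell\kappa(M)))$ bits, with the $\kappa_\ell$ factor entering through $\delta^{-1}$; and the established linear rate yields $T = \bigO(\kappa_\ell\log(\gamma D^2/\epsilon))$ iterations to reach $f(x^{(T)})-f^* \le (\gamma/2)\|x^{(T)}-x^*\|^2 \le \epsilon$, producing the second term.
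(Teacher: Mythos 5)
Your proposal matches the paper's proof in both structure and detail: the same two-round matrix-quantization analysis yielding $\|\bar M - M\| \le \lambda_{\min}(M)/(8\kappa_\ell)$ and hence a contraction factor $\xi = 1-1/(2\kappa_\ell)$ for the exact step with the quantized preconditioner, the same joint induction on the iterate invariant and the two descent-direction quantization invariants with identical parameter bookkeeping ($\delta K + \xi = 1 - 1/(4\kappa_\ell)$), and the same communication accounting. The only cosmetic differences are that the paper bounds $\|x^{(t+1)}-x^{(t)}\|$ by the triangle inequality through $x^*$ rather than via $\eta\|v^{(t)}\|$, and controls the exact step by explicitly splitting $\bar M^{-1}\nabla^2 f$ into $M^{-1}\nabla^2 f$ plus a perturbation term rather than by directly confining the spectrum of $\bar M^{-1}\bar H^{(t)}$.
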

\end{tcolorbox}

When the accuracy $\epsilon$ is sufficiently small (which is often the case in practice), the first summand is negligible and the total number of bits until reaching it is just
\vspace{-1mm}
\begin{equation*}
        b=\bigO \left(n d \kappa_{\ell} \log (n \kappa_{\ell} \kappa(M)) \log \frac{\gamma D^2}{\epsilon}\right)
        \vspace{-1mm}
\end{equation*}
which gains over quantized gradient descent in \cite{alistarh2020improved} the linear dependence on the condition number of $M$. We prove Theorem \ref{thm:convergence_GLM} in Appendix \ref{app:convergence_GLM}.

\vspace{-2mm}
\section{Quantized Newton's method}

After warming-up with quantizing fixed preconditioners in the case of Generalized Linear Models, we move forward to quantize non-fixed ones. The extreme case of a preconditioner is the whole Hessian matrix; preconditioning with it yields Newton's method, which is computationally expensive, but removes completely the dependency on the condition number from the iteration complexity. We develop a quantized version of Newton's method in order to address a question raised by \cite{alistarh2020improved} regarding whether the communication complexity of minimizing a sum of smooth and strongly convex functions depends linearly on the condition number of the problem. The main technical challenge towards that, is keeping track of the concentration of the Hessians around the Hessian evaluated at the optimum, while the algorithm converges. We show that the linear dependence of the communication cost on the condition number of the problem is not necessary, in exchange with extra dependence on the dimension of the problem, i.e. $d^2$ instead of $d$. This can give significant advantage for low-dimensional and ill-conditioned problems (training generalized linear models is among them). 
\newline
As it is natural for Newton's method, we make the following assumptions for the objective function $f$:
\begin{Assumption}
\label{ass:local_cost_newton}
The functions $f_i$ are all $\gamma$-smooth and $\mu$-strongly convex with a $\sigma$-Lipschitz Hessian, $\gamma,\mu,\sigma>0$.
\end{Assumption}
We note that the lower bound derived by \citet{alistarh2020improved} is obtained for the case that $f_i$ are quadratic functions; quadratic functions indeed satisfy Assumption \ref{ass:local_cost_newton}.
As in the case of GLMs, we define the condition number of the problem to be
\vspace{-1mm}
\begin{equation*}
    \kappa:=\frac{\gamma}{\mu}.
    \vspace{-1mm}
\end{equation*}
We also introduce a constant $\alpha \in [0,1)$, to be specified later, which will control the convergence of the algorithm.

\subsection{Algorithm Description}

We now describe our quantized Newton's algorithm. 
Again, we split the presentation into several parts: local initialization (A), estimating the initial Hessian modulo quantization (B), as well as the quantized initial descent direction (C), and finally, quantization and update for each iteration (D,E).

\noindent\rule[0.5ex]{\linewidth}{1pt}
\vspace{-8mm}
\begin{enumerate}
\item Choose the master node at random, e.g. $i_0$.
\subsection*{(A) Starting Point and Parameters for Hessian Quantization:}
\item Choose $x^{(0)} \in \mathbb{R}^d$, such that 
\begin{equation*}
    \max_i \lbrace\|x^{(0)}-x^*\|,\|x^{(0)}-x_i^*\| \rbrace \leq \frac{\alpha \mu}{2 \sigma}.
\end{equation*}
\vspace{-8mm}
\item We define the parameter
\begin{align*}
    G^{(t)}=\frac{\mu}{4} \alpha \left(\frac{1+\alpha}{2} \right)^t.
\end{align*}
\subsection*{(B) Initial Hessian Quantized Estimation:}
\item Compute $\nabla^2 f_i(x^{(0)})$ in each node.
\vspace{-2mm}
\item Encode $\nabla^2 f_i(x^{(0)})$ in each node $i$ and decode it in the master node $i_0$ using its information:
\vspace{-2mm}
\begin{align*}
\resizebox{0.9 \hsize}{!}{$H_0^i= \phi^{-1}\left(Q \left(\phi(\nabla^2 f_i(x^{(0)})), \phi(\nabla^2 f_{i_0}(x^{(0)})), 2 \sqrt{d} \gamma, \frac{G^{(0)}}{2 \sqrt{2} \kappa} \right)\right)$}.
\end{align*}
\vspace{-8mm}
\item Average the decoded matrices in the master node:
\newline
$S_0=\frac{1}{n} \sum_{i=1}^n H_0^i$.
\vspace{-2mm}
\item Encode the average in the master node and decode in each node $i$ using its local information 
\vspace{-2mm}
\begin{equation*}
\resizebox{0.9 \hsize}{!}{
   $H_0= \phi^{-1}\left(Q \left(\phi(S_0), \phi(\nabla^2 f_i(x^{(0)})),  \sqrt{d} \left( \frac{G^{(0)}}{2\kappa} + 2 \gamma \right) , \frac{G^{(0)}}{2 \sqrt{2} \kappa} \right)\right)$}. 
\end{equation*}
\vspace{-8mm}
\subsection*{Parameters for Descent Direction Quantzation:}
\item Define the parameters
\vspace{-2mm}
\begin{align*}
    &\theta:=\frac{\alpha(1-\alpha)}{4},
     K:=\frac{2}{\alpha},
     P^{(t)}:= \frac{\mu}{2 \sigma} K \alpha \left( \frac{1+\alpha}{2}\right)^t.
\end{align*}
\vspace{-8mm}
\subsection*{(C) Initial Descent Direction Quantized Estimation:}
\item Compute $H_0^{-1} \nabla f_i(x^{(0)})$ in each node.
\vspace{-2mm}
\item Encode $H_0^{-1} \nabla f_i(x^{(0)})$ in each node and decode it in the master node using its local information:
\vspace{-2mm}
\begin{equation*}
    \resizebox{0.9 \hsize}{!}{$v_i^{(0)}=Q \left(H_0^{-1} \nabla f_i(x^{(0)}), H_0^{-1} \nabla f_{i_0} (x^{(0)}),4 \kappa P^{(0)},\frac{\theta P^{(0)}}{2} \right)$}.
\end{equation*}
\vspace{-8mm}
\item Average the quantized local information:
\newline
$p^{(0)}=\frac{1}{n} \sum_{i=1}^n v_i^{(0)}$.
\vspace{-2mm}
\item Encode $p^{(0)}$ in the master node and decode it in each machine $i$ using its local information:
\vspace{-2mm}
\begin{equation*}
   \resizebox{0.9 \hsize}{!}{$v^{(0)}=Q \left(p^{(0)}, H_0^{-1} \nabla f_i (x^{(0)}),\left(\frac{\theta}{2}+4 \kappa \right) P^{(0)}, \frac{\theta P^{(0)}}{2} \right)$} .
\end{equation*}
\vspace{-4mm}
\newline
\textbf{For} $t \geq 0$: 
\item Compute 
\vspace{-2mm}
\begin{equation*}
    x^{(t+1)}=x^{(t)}-v^{(t)}. 
\end{equation*}
\vspace{-8mm}
\subsection*{(D) Hessian Quantized Estimation for Next Steps:}
\item Compute $\nabla^2 f_i(x^{(t+1)})$ in each node.
\vspace{-2mm}
\item Encode $\nabla^2 f_i(x^{(t+1)})$ in each node $i$ and decode in the master node using the previous local estimate:
\vspace{-2mm}
\begin{equation*}
    \resizebox{0.9 \hsize}{!}{$H_{t+1}^i=\phi^{-1}\left(Q \left(\phi(\nabla^2 f_i(x^{(t+1)})), \phi(H_t^i), \frac{10 \sqrt{d}}{1+\alpha} G^{(t+1)}, \frac{ G^{(t+1)}}{2 \sqrt{2} \kappa} \right)\right)$}.
\end{equation*}
\vspace{-8mm}
\item Average the quantized local Hessian information:
\newline
$S_{t+1}=\frac{1}{n} \sum_{i=1}^{n} H_{t+1}^i$.
\vspace{-2mm}
\item Encode $S_{t+1}$ in the master node and decode it back in each node using the previous global estimate:
\vspace{-2mm}
\begin{equation*}
    \resizebox{0.9 \hsize}{!}{$H_{t+1}=\phi^{-1} \left(Q \left(\phi(S_{t+1}), \phi(H_t) , \sqrt{d} \left( \frac{1}{2 \kappa}+ \frac{10}{1+\alpha} \right) G^{(t+1)} , \frac{G^{(t+1)}}{2 \sqrt{2} \kappa} \right) \right)$}.
\end{equation*}
\vspace{-8mm}
\subsection*{(E) Descent Direction Quantized Estimation:}
\item Compute $H_{t+1}^{-1} \nabla f_i(x^{(t+1)})$ in each node.
\vspace{-2mm}
\item Encode $H_{t+1}^{-1} \nabla f_i(x^{(t+1)})$ in each node $i$ and decode in the master node using the previous local estimate:
\vspace{-2mm}
\begin{equation*}
    \resizebox{0.9 \hsize}{!}{$v^{(t+1)}_i=Q \left(H_{t+1}^{-1} \nabla f_i(x^{(t+1)}), v^{(t)}_i, 11 \kappa P^{(t+1)} , \frac{\theta P^{(t+1)}}{2} \right)$}.
\end{equation*}
\vspace{-8mm}
\item Average the quantized local Hessian information:
\newline
$p^{(t+1)}=\frac{1}{n} \sum_{i=1}^{n} v^{(t+1)}_i$.
\vspace{-2mm}
\item Encode $S_{t+1}$ in the master node and decode it back in each node using the previous global estimate:
\vspace{-2mm}
\begin{equation*}
    \resizebox{0.9 \hsize}{!}{$v^{(t+1)}=Q \left(p^{(t+1)}, v^{(t)},\left(\frac{\theta}{2}+11 \kappa \right) P^{(t+1)}, \frac{\theta P^{(t+1)}}{2} \right)$}.
\end{equation*}
\end{enumerate}
\vspace{-4mm}
\noindent\rule[0.5ex]{\linewidth}{1pt}
The restriction of the initialization $x^{(0)}$ is standard for Newton's method, which is known to converge only \textit{locally}. Usually $x^{(0)}$ is chosen such that $\alpha \geq \frac{\sigma}{\mu} \|x^{(0)}-x^*\|$, while we choose it such that $\alpha \geq 2 \frac{\sigma}{\mu} \|x^{(0)}-x^*\|$ (and the same for $x_i^*$ in the place of $x^*$). This difference occurs from the extra errors due to quantization. This assumption implies also that the minima of the local costs cannot be too far away from each other.
\newline
We now state our theorem on communication complexity of quantized Newton's algorithm, which is the main result of the paper. The proof is in Appendix \ref{app:quant_newton}, and relies on analyzing the behaviour of both the quantized Hessian estimates and the quantized descent direction estimates simultaneously, as can be seen in Lemma \ref{le:desc_direction_newton}.
\begin{tcolorbox}
\begin{restatable}{theorem}{maintheorem}
\label{thm:main_theorem}
The iterates of the quantized Newton's method starting from a point $x^{(0)}$, such that
\begin{equation*}
    \|x^{(0)}-x^*\| \leq \frac{\mu}{4 \sigma}  \left(\alpha=\frac{1}{2}\right)
\end{equation*}
satisfy
\begin{equation*}
    \|x^{(t)}-x^*\| \leq \frac{\mu}{4 \sigma} \left( \frac{3}{4} \right)^t 
\end{equation*}
and the communication cost until reaching accuracy $\epsilon$ in terms of function values is
\begin{equation}
\label{eq:communication_newton}
   \bigO\left(nd^2 \log\left( \sqrt{d} \kappa \right) \log \frac{\gamma \mu^2}{\sigma^2 \epsilon} \right)
\end{equation}
many bits in total.
\end{restatable}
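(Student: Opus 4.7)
The plan is to establish the theorem by a simultaneous induction that tracks three invariants at each step $t$: the iterate error $\|x^{(t)} - x^*\|$, the Hessian quantization error $\|H_t - \nabla^2 f(x^{(t)})\|$ (controlled by $G^{(t)}$), and the descent-direction quantization error $\|v^{(t)} - H_t^{-1}\nabla f(x^{(t)})\|$ (controlled by $\theta P^{(t)}$). Given geometric decay of the latter two, a standard perturbed Newton analysis will yield the contraction $\|x^{(t+1)} - x^*\| \le \tfrac{1+\alpha}{2}\|x^{(t)} - x^*\|$, producing the $(3/4)^t$ rate for $\alpha=1/2$.

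First I would handle the base case. Each $H_0^i$ is obtained by $\phi$-vectorizing $\nabla^2 f_i(x^{(0)})$ and quantizing against $\phi(\nabla^2 f_{i_0}(x^{(0)}))$; the validity of the input distance bound $2\sqrt{d}\gamma$ follows from $\gamma$-smoothness of each $f_i$ combined with Lemma \ref{le:norm_distortion}, and the precision $G^{(0)}/(2\sqrt{2}\kappa)$ translates through the same lemma to a matrix-norm error of at most $G^{(0)}/(2\kappa)$. After averaging and the second encode/decode, the total $\|H_0 - \nabla^2 f(x^{(0)})\|$ stays bounded by $G^{(0)}/\kappa$. An analogous argument for $H_0^{-1}\nabla f_i(x^{(0)})$ (using $\|H_0^{-1}\| \le 2/\mu$ when $G^{(0)}$ is small enough) bounds the initial descent-direction error by $\theta P^{(0)}$.

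The hard part is the inductive step, where I must verify that the ``distance'' arguments fed to $Q$ in parts (D) and (E) remain valid upper bounds at step $t+1$. For the Hessian this reduces to checking
\[
\|\phi(\nabla^2 f_i(x^{(t+1)})) - \phi(H_t^i)\| \le \tfrac{10\sqrt{d}}{1+\alpha}\,G^{(t+1)},
\]
which follows from $\sigma$-Lipschitzness of $\nabla^2 f_i$, the inductive bound $\|x^{(t+1)} - x^{(t)}\| \le P^{(t)}$, the previous Hessian error bound, and Lemma \ref{le:norm_distortion}. A more delicate bookkeeping shows that $H_{t+1}^{-1}\nabla f_i(x^{(t+1)})$ differs from $v_i^{(t)}$ by at most $11\kappa\,P^{(t+1)}$, by decomposing the difference $H_{t+1}^{-1}\nabla f_i(x^{(t+1)}) - H_t^{-1}\nabla f_i(x^{(t)})$ into a Hessian-difference term and a gradient-difference term and using the respective Lipschitz/smoothness bounds together with $\|H_t^{-1}\|, \|H_{t+1}^{-1}\| = \bigO(1/\mu)$. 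This is the content of Lemma \ref{le:desc_direction_newton}, and is where all the constants in the algorithm are tuned.

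With the two error invariants in hand, the iterate contraction follows from $x^{(t+1)} - x^* = (x^{(t)} - x^*) - v^{(t)}$ combined with $\nabla f(x^{(t)}) = \int_0^1 \nabla^2 f(x^* + s(x^{(t)}-x^*))(x^{(t)}-x^*)\,ds$: the classical second-order Taylor error contributes $\bigO(\sigma\|x^{(t)}-x^*\|^2/\mu)$, while the Hessian and direction quantization errors contribute $\bigO(G^{(t)}\|x^{(t)}-x^*\|/\mu) + \theta P^{(t)}$, all of which are dominated by $\tfrac{1+\alpha}{2}\|x^{(t)} - x^*\|$ for the chosen parameters. For the bit complexity I invoke Proposition \ref{lattice_quantization}: each Hessian quantization handles a $\tfrac{d(d+1)}{2}$-dimensional vector with ratio $y/\epsilon = \bigO(\sqrt{d}\kappa)$, costing $\bigO(d^2\log(\sqrt{d}\kappa))$ bits per transmission, while each direction quantization costs $\bigO(d\log \kappa)$ bits. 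Summing over $n$ nodes, two transmissions per round, and $T = \bigO(\log(\gamma\mu^2/(\sigma^2\epsilon)))$ rounds (required so that $\gamma\|x^{(T)}-x^*\|^2 \le \epsilon$) yields exactly \eqref{eq:communication_newton}.
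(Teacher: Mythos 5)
Your proposal follows essentially the same route as the paper's proof: the same simultaneous induction on the iterate error, the (local and global) Hessian quantization errors, and the (local and global) descent-direction errors — this is exactly the content of the paper's Lemma \ref{le:desc_direction_newton} — followed by the same perturbed-Newton contraction and the same per-round bit count of $\bigO(nd^2\log(\sqrt{d}\kappa))$ for Hessians and $\bigO(nd\log\kappa)$ for directions over $\bigO(\log(\gamma\mu^2/(\sigma^2\epsilon)))$ rounds. One small imprecision: since the direction-quantization error $\theta P^{(t)}$ is an absolute quantity rather than proportional to the current distance, the induction yields geometric decay of the \emph{bound} $\frac{\mu}{2\sigma}\alpha\left(\frac{1+\alpha}{2}\right)^t$ rather than the pointwise contraction $\|x^{(t+1)}-x^*\| \le \frac{1+\alpha}{2}\|x^{(t)}-x^*\|$ you state, but this is exactly what the theorem requires.
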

\end{tcolorbox}
We note that the lower bound derived in \cite{alistarh2020improved} is for the case that all functions $f_i$ are quadratics. For quadratics, the Hessian is constant, thus $\sigma=0$ and $\alpha$ can be chosen equal to $0$ as well. Then, (non-distributed) Newton's method converges in only one step. However, in the distributed case, $\sigma = 0$ implies $G^{(t)}=0$, thus the estimation of $\nabla^2 f(x^{(t)})$ must be exact. This would mean that we need to use an infinite number of bits, and this can be seen also in our communication complexity results. In order to apply our result in a practical manner, we need to allow the possibility for strictly positive quantization error of the Hessian, thus we must choose $\sigma>0$.

\vspace{-2mm}
\section{Estimation of the Minimum in the Master}
In the previous sections we computed an approximated minimizer of our objective function up to some accuracy and counted the communication cost of the whole process. We now extend our interest to the slightly harder problem of estimating the minimum $f^*$ of the function $f$ (which is again assumed to be $\gamma$-smooth and $\mu$-strongly convex) in the master node with accuracy $\epsilon$. This extension is not considered in \cite{magnusson2020maintaining}, but is discussed in \cite{alistarh2020improved}. To that end, we estimate the minimizer $x^*$ of $f$ by a vector $x^{(t)}$, such that $f(x^{(t)}) -f^* \leq \frac{\epsilon}{2}$, and the communication cost of doing that is again given by expression (\ref{eq:communication_glm}) for GLM training and expression (\ref{eq:communication_newton}) for Newton's method.
\newline
We denote $x_i^*$ the minimizer of the local cost function $f_i$ and $f_i^*:=f_i(x_i^*)$ its minimum. We also assume that we are aware of an over approximation $C>0$ of the maximum distance of $x^*$ from the minimizers of the local costs $x_i^*$, i.e.
    $\max_{i=1,...,n} \|x^*-x_i^*\| \leq C$
and a radius $c>0$ for the minima of the local costs:
    $\max_{i=1,...,n} \mid f_i^* \mid \leq c$.
Estimating these constants can be feasible in many practical situations: 
\vspace{-3mm}
\begin{itemize}
    \item We can always bound the quantity $\max_{i=1,...,n} \|x^*-x_i^*\| $ by a known constant if we set our problem in a compact domain as it is the case in \cite{4048825} and \cite{alistarh2020improved}. Also, if our local data are obtained from the same distribution, then we do not expect the minimizers of the local costs to be too far away from the global minimizer.
    \vspace{-2mm}
    \item The minima $f_i^*$ of the local costs are often exactly $0$ (as assumed in \cite{alistarh2020improved}). This is because the local cost functions $f_i$ are often quadratics, as it happens in the case of GLMs. In the worst case, knowing just that $f_i \geq 0$, we can write
    \vspace{-1mm}
    \begin{equation*}
       \mid f_i^* \mid = f_i^* \leq f_i(x^{(0)}) \leq n f(x^{(0)})
       \vspace{-1mm}
    \end{equation*}
    and the value $f(x^{(0)})$ is often available as discussed in Section \ref{sec:GLMs} and in \cite{magnusson2020maintaining}.
\end{itemize}

For estimating the minimum $f^*$, we start by computing $f_i(x^{(t)})$ in each node $i$ and communicate them to the master node $i_0$ as follows:
\begin{equation*}
    q_i^{(t)}:=Q(f_i(x^{(t)}), f_{i_0}(x^{(t)}),2  (\gamma C^2+c), \epsilon/2).
\end{equation*}
Then the master node computes and outputs the average 
\begin{equation*}
    \bar f= \frac{1}{n} \sum_{i=1}^n q_i^{(t)}.
\end{equation*}
\begin{restatable}{Proposition}{functionvalue}
The value $\bar f$ which occurs from the previous quantization procedure is an estimate of the true minimum $f^*$ of $f$ with accuracy $\epsilon$ and the cost of quantization is
\begin{equation*}
    \bigO \left(n \log \frac{\gamma C^2+c}{\epsilon} \right).
\end{equation*}
if $\epsilon$ is sufficiently small.
\end{restatable}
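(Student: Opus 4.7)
The plan is to verify that the quantizer call in the definition of $q_i^{(t)}$ is valid, propagate the resulting per-node error through the average to control $|\bar f - f(x^{(t)})|$, and then combine with the preceding optimization accuracy $f(x^{(t)}) - f^{*} \le \epsilon/2$ to obtain $|\bar f - f^{*}| \le \epsilon$. The communication cost follows directly from Proposition \ref{lattice_quantization} applied in dimension one, since only scalars are transmitted in this final round.

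The most substantive step is verifying $|f_i(x^{(t)}) - f_{i_0}(x^{(t)})| \le 2(\gamma C^2 + c)$, which is the precondition needed to invoke Proposition \ref{lattice_quantization} with range parameter $y = 2(\gamma C^2 + c)$. I would use $\gamma$-smoothness of $f_i$ around its local minimizer $x_i^{*}$, together with $|f_i^{*}| \le c$, to obtain $|f_i(x^{(t)})| \le c + \tfrac{\gamma}{2}\|x^{(t)} - x_i^{*}\|^2$. Then, since $f(x^{(t)}) - f^{*} \le \epsilon/2$ and $f$ is $\mu$-strongly convex, $\|x^{(t)} - x^{*}\| \le \sqrt{\epsilon/\mu}$, and combining with the hypothesis $\|x^{*} - x_i^{*}\| \le C$ yields $\|x^{(t)} - x_i^{*}\| \le C + \sqrt{\epsilon/\mu}$. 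For $\epsilon$ sufficiently small (explicitly $\epsilon \le (\sqrt{2}-1)^2 \mu C^2$), the right-hand side is at most $\sqrt{2}\,C$, so $|f_i(x^{(t)})| \le c + \gamma C^2$, and the triangle inequality delivers the required range bound.

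With the quantizer call validated, Proposition \ref{lattice_quantization} applied with $d = 1$, $y = 2(\gamma C^2 + c)$, and precision $\epsilon/2$ yields $|q_i^{(t)} - f_i(x^{(t)})| \le \epsilon/2$ for every node $i$, at a per-scalar bit cost of $\bigO(\log((\gamma C^2 + c)/\epsilon))$. Averaging the per-node bounds gives
\begin{equation*}
|\bar f - f(x^{(t)})| \le \frac{1}{n} \sum_{i=1}^n |q_i^{(t)} - f_i(x^{(t)})| \le \epsilon/2,
\end{equation*}
and a final triangle inequality with $|f(x^{(t)}) - f^{*}| \le \epsilon/2$ produces $|\bar f - f^{*}| \le \epsilon$. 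Summing the per-node communication costs over all $n$ nodes yields the claimed $\bigO(n \log((\gamma C^2 + c)/\epsilon))$ total bits.

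The main obstacle is exactly the range-validation step: it requires an a priori quantitative control on $\|x^{(t)} - x_i^{*}\|$ which is obtained by chaining the strong-convexity estimate on $\|x^{(t)} - x^{*}\|$ (which only tightens once $\epsilon$ is small, giving the ``sufficiently small $\epsilon$'' qualifier in the statement) with the hypothesized bound $C$ on $\|x^{*} - x_i^{*}\|$. Any slack here would either force the quantizer range to depend on additional, possibly unknown, quantities, or degrade the logarithmic constant in the communication bound.
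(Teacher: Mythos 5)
Your proposal is correct and follows essentially the same route as the paper's proof: validate the quantizer's range parameter via $\gamma$-smoothness around the local minimizers and the bounds $C$ and $c$, show $\|x^{(t)}-x_i^*\|^2\le 2C^2$ for small $\epsilon$, average the per-node quantization errors, and close with a triangle inequality plus a one-dimensional bit count. The only cosmetic difference is that you derive $\|x^{(t)}-x^*\|=\bigO(\sqrt{\epsilon/\mu})$ from strong convexity, while the paper uses the algorithm's termination criterion $\|x^{(t)}-x^*\|\le\sqrt{\epsilon/\gamma}$; both yield the same conclusion.
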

The proof is presented in Appendix \ref{app:function_value}.
\newline
Thus, for the problem that the master node needs to output estimates for both the minimizer and the minimum with accuracy $\epsilon$ in terms of function values, the total communication cost is at most
\begin{align*}
\bigO \left(n d \kappa_{\ell} \log (n \kappa_{\ell} \kappa(M)) \log \frac{\gamma( C^2 + D^2)+c) }{\epsilon}\right)
\end{align*}
many bits in total for QPGD-GLM
\begin{align*}
    \bigO\left(nd^2 \log\left( \sqrt{d} \kappa \right) \log \left( \left( \gamma \left(\frac{ \mu^2}{\sigma^2}+ C^2 \right)+c \right) \frac{1}{\epsilon} \right)  \right).
\end{align*}
many bits in total for quantized Newton's method when $\epsilon$ is sufficiently small.
\vspace{-2mm}
\section{Experiments}
\label{sec:experiments}

\hide{
\begin{figure*}[h]
\begin{center}
\begin{minipage}[b]{0.48\linewidth}
 \centering
  \includegraphics[width=\columnwidth]{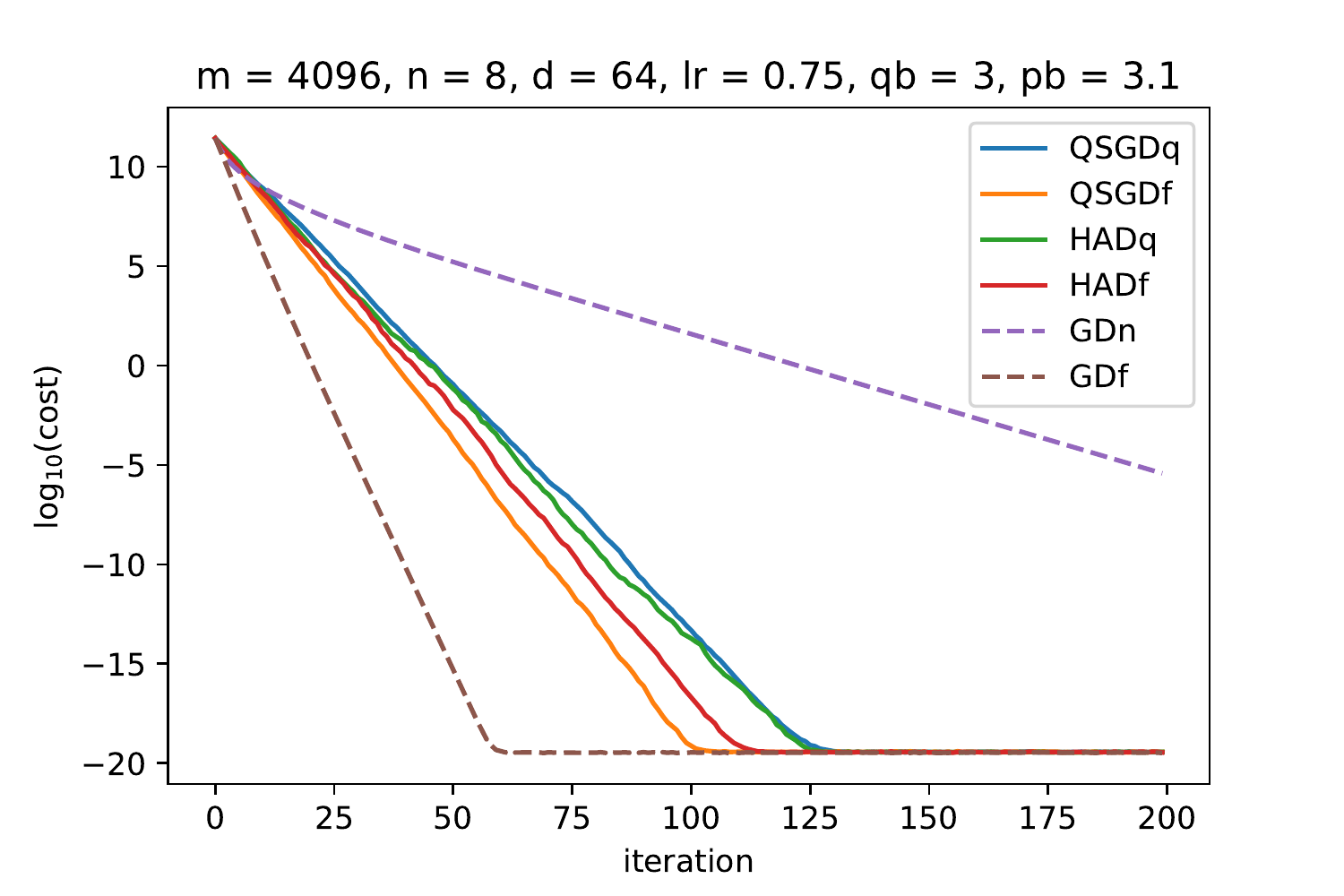}
\subcaption{Performance on synthetic data}
\label{fig:synthetic}
  \end{minipage}
    \quad
    \begin{minipage}[b]{0.48\linewidth}
        \centering
        \includegraphics[width=\columnwidth]{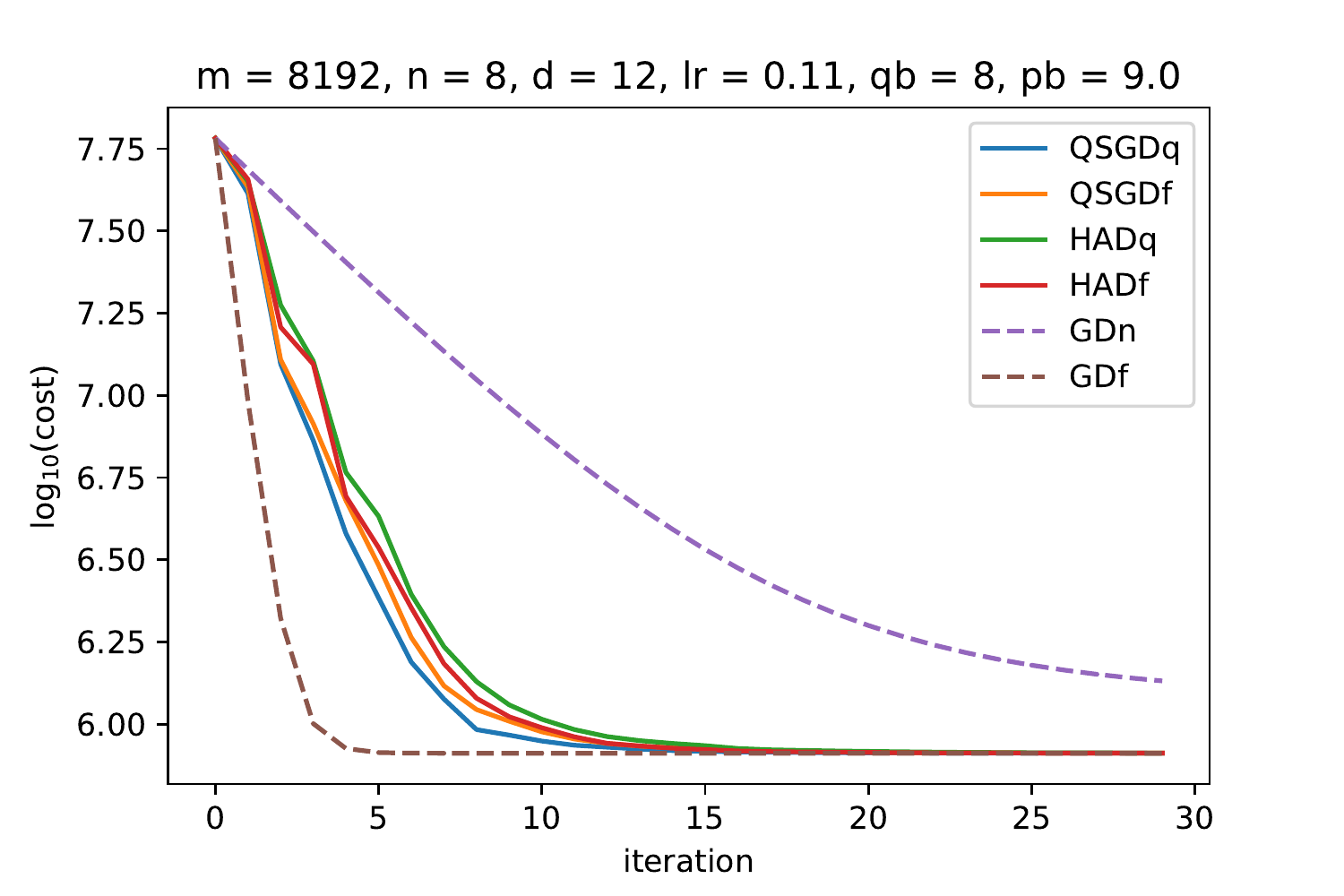}
\subcaption{Performance on \texttt{cpusmall\_scale}}
\label{fig:cpusmall}
    \end{minipage}
\end{center}
\vskip -0.2in
\end{figure*}

\begin{figure*}[h]
\begin{center}
\begin{minipage}[b]{0.45\linewidth}
 \centering
  \includegraphics[width=\columnwidth]{Figures/german_numer.txt_lr_0.005_n_5_qb_82.pdf}
\subcaption{Logistic Regression (placeholder)}
\label{fig:synthetic}
  \end{minipage}
    \quad
    \begin{minipage}[b]{0.45\linewidth}
        \centering
  \includegraphics[width=\columnwidth]{Figures/german_numer.txt_lr_0.005_n_5_qb_82.pdf}
\subcaption{Logistic Regression (placeholder)}
\label{fig:cpusmall}
    \end{minipage}
\end{center}
\vskip -0.2in
\end{figure*}

    \begin{figure*}
        \centering
        \begin{subfigure}[b]{0.45\textwidth}
            \centering
            \includegraphics[width=\textwidth]{Figures/convergence_lr_0.75_n_8_qb_3.pdf}
            \caption[Network2]%
            {{\small Network 1}}    
            \label{fig:mean and std of net14}
        \end{subfigure}
        \hfill
        \begin{subfigure}[b]{0.45\textwidth}  
            \centering 
            \includegraphics[width=1.1\textwidth]{Figures/cpu_small_lr_0.11_n_8_qb_8.pdf}
            \caption[]%
            {{\small Network 2}}    
            \label{fig:mean and std of net24}
        \end{subfigure}
        \vskip\baselineskip
        \begin{subfigure}[b]{0.4\textwidth}   
            \centering 
            \includegraphics[width=\textwidth]{Figures/german_numer.txt_lr_0.005_n_5_qb_82.pdf}
            \caption[]%
            {{\small Network 3}}    
            \label{fig:mean and std of net34}
        \end{subfigure}
        \hfill
        \begin{subfigure}[b]{0.4\textwidth}   
            \centering 
            \includegraphics[width=\textwidth]{Figures/german_numer.txt_lr_0.005_n_5_qb_82.pdf}
            \caption[]%
            {{\small Network 4}}    
            \label{fig:mean and std of net44}
        \end{subfigure}
        \caption[ The average and standard deviation of critical parameters ]
        {\small The average and standard deviation of critical parameters: Region R4} 
        \label{fig:mean and std of nets}
    \end{figure*}
 }
 
\begin{figure*}[h]
\begin{center}
\begin{minipage}[b]{0.33\linewidth}
 \centering
\includegraphics[width=\textwidth]{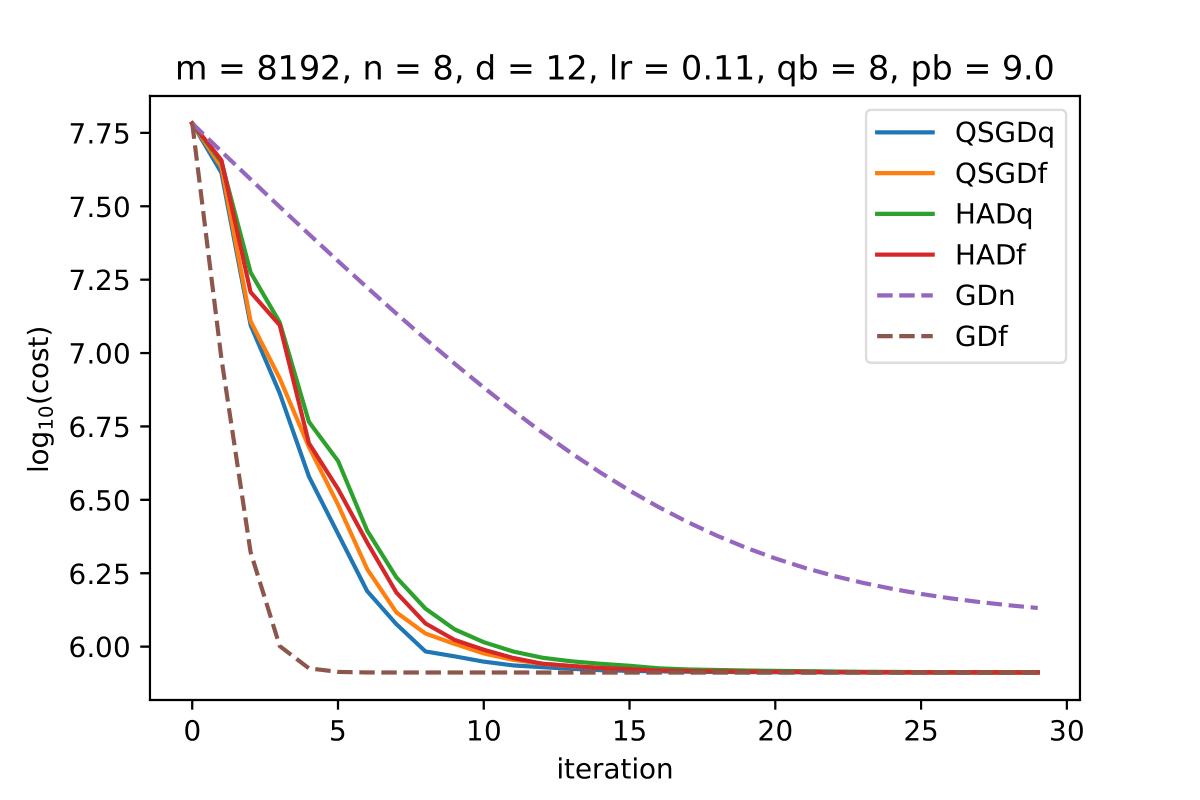}
\subcaption{Least-squares regression performance on \texttt{cpusmall\_scale}}
\label{fig:cpusmall}
  \end{minipage}
    \quad
    \begin{minipage}[b]{0.305\linewidth}
        \centering
        \includegraphics[width=\textwidth]{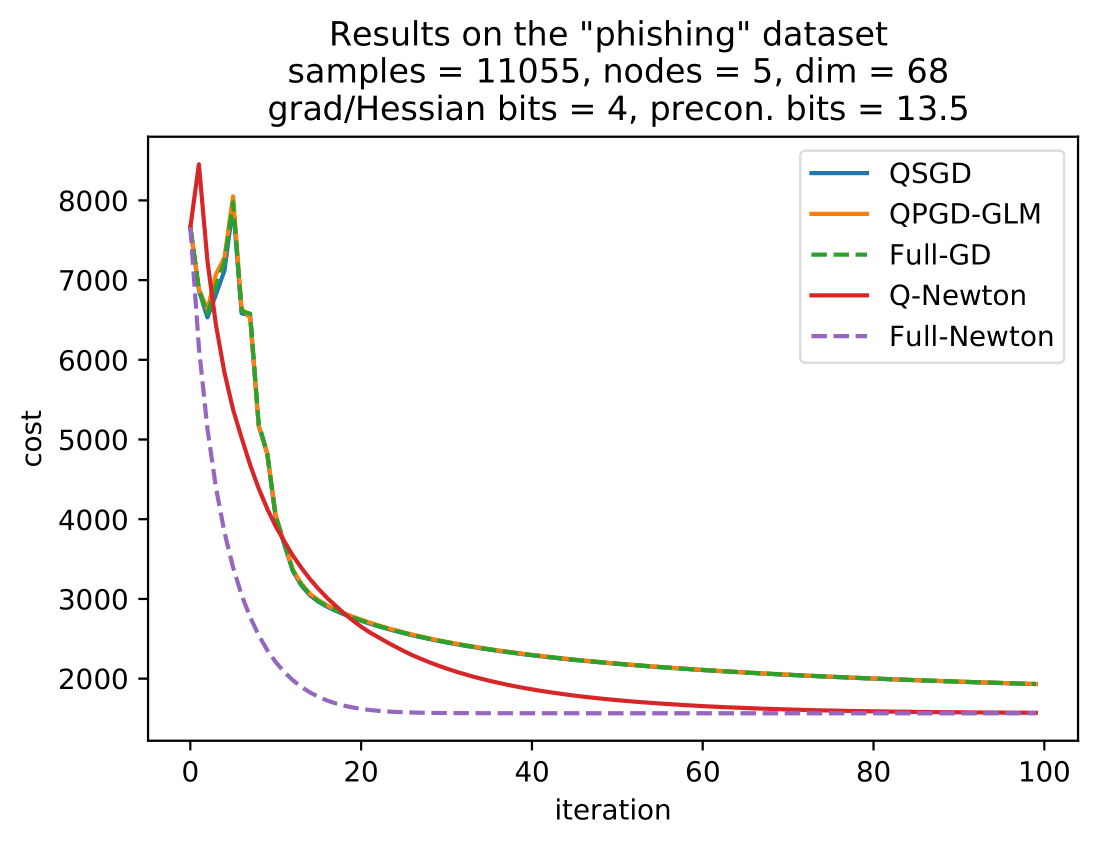}
\subcaption{Logistic regression performance on \texttt{phishing} }
\label{fig:phishing}
\end{minipage}
        \quad
\begin{minipage}[b]{0.305\linewidth}
\centering
\includegraphics[width=\textwidth]{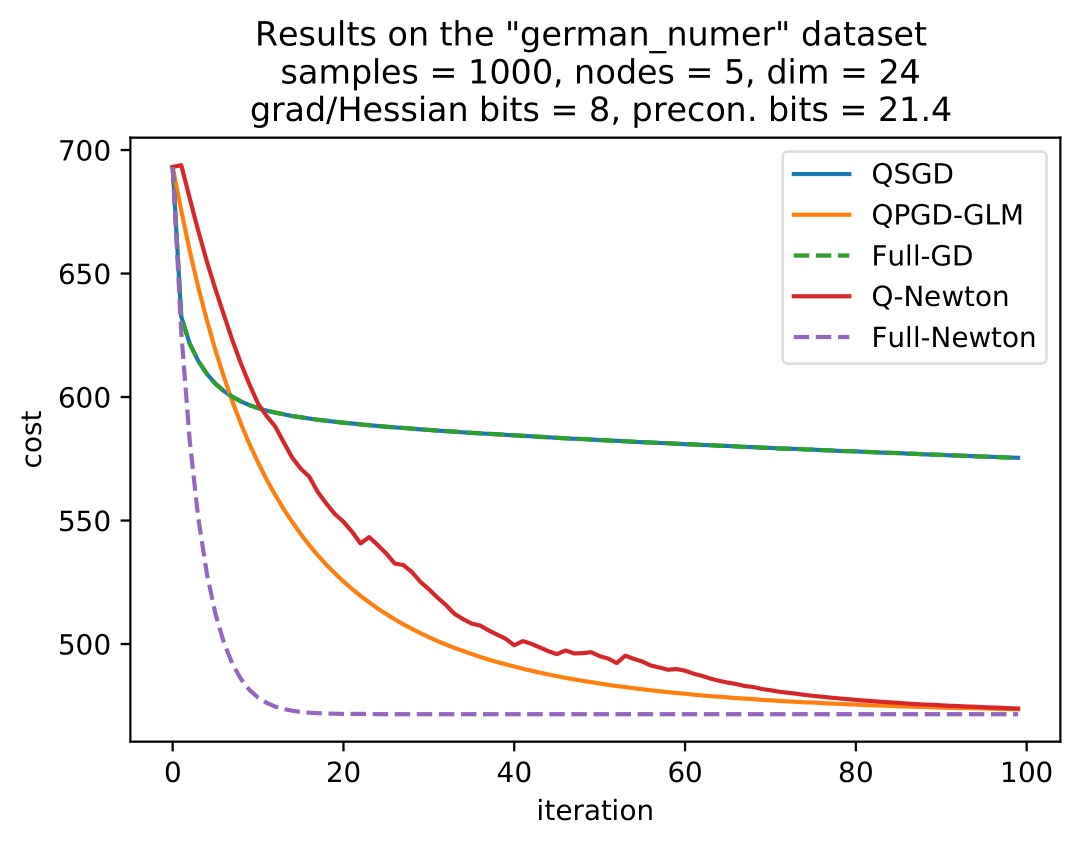}
\subcaption{Logistic regression performance on \texttt{german\_numer}}
\label{fig:german_numer}
    \end{minipage}
\end{center}
\vskip -0.2in
\end{figure*}

\subsection{Experiment 1: Least-Squares Regression}
We first test our method experimentally to compress a parallel solver for least-squares regression. The setting is as follows: we are given as input a data matrix $A$, with rows randomly partitioned evenly among the nodes, and a target vector $b$, with the goal of finding $ x^* = \text{argmin}_{ x}\|A x- b\|^2_2$. Since this loss function $f(x):= \|A x- b\|^2_2$ is quadratic, its Hessian is constant, and so Newton's method and QPGD-GLM are equivalent: in both cases, we need only to provide the preconditioner matrix $A^T A$ in the first iteration, and machines can henceforth use it for preconditioning in every iteration.

To quantize the preconditioner matrix, we apply the `practical version' (that is, using the cubic lattice with $\bmod$-based coloring) of the quantization method of \cite{davies2021new}, employing the `error detection' method in order to adaptively choose the number of bits required for the decoding to succeed. Each node $i$ quantizes the matrix $A_i^T A_i$, which is decoded by the master node $i_0$ using $A_{i_0}^T A_{i_0}$. Node $i_0$ computes the average, quantizes, and returns the result to the other nodes, who decode using  $A_i^T A_i$.

To quantize gradients, we use two leading gradient quantization techniques: QSGD \cite{alistarh2016qsgd}, and the Hadamard-rotation based method of \cite{pmlr-v70-suresh17a}, since these are optimized for such an application.\footnote{There is a wide array of other gradient quantization methods; we use these two as a representative examples, since we are mostly concerned with the effects of preconditioner quantization.} In each iteration (other than the first), we quantize the \emph{difference} between the current local gradient and that of last iteration, average these at the master node $i_0$, and quantize and broadcast the result.

\paragraph{Compared Methods.}
In Figure \ref{fig:cpusmall} we compare the following methods: GDn and GDf are full-precision (i.e., using 32-bit floats) gradient descent using \emph{no preconditioning} and \emph{full-precision preconditioning} respectively, as baselines. QSGDq and QSGDf use QSGD for gradient quantization, and the \emph{quantized} and \emph{full-precision} preconditioner respectively. HADq and HADf are the equivalents using instead the Hadamard-rotation method for gradient quantization. When using a preconditioner, we rescale preconditioned gradients to preserve $\ell_2$-norm, so that our comparison is based only on update direction and not step size.

\paragraph{Parameters.}
In addition to $m$, $n$, and $d$, we also have the following parameters: the learning rate (lr in the figure titles) is set close to the maximum for which gradient descent will converge, since this is the regime in which preconditioning can help. The number of bits per coordinate used to quantize gradients (qb) and preconditioners (pb) are also shown; the latter is an average since the quantization method uses a variable number of bits\footnote{These quantization methods (and most others) also require exchange of two full-precision scalars, which are not included in the per-coordinate costs since they are independent of dimension.}. The results presented are an average of the cost function per descent iteration, over $10$ repetitions with different random seeds.

\hide{
\paragraph{Synthetic Data.}
We first apply the methods to synthetic data: our data matrix $A$ consists of independent Gaussian entries with variance $1$, we choose our target optimum $x^*$ to be Gaussian with variance $1000$, and set $B=Ax^*$.

Figure \ref{fig:synthetic} shows that, even using only $\sim 3$ bits per coordinate for both gradients and preconditioner, we achieve signicantly faster convergence than full-precision gradient descent without preconditioning, and converge to essentially as good a solution as full-precision gradient descent with preconditioning.}

\paragraph{Dataset}
We use the dataset \texttt{cpusmall\_scale} from LIBSVM \cite{LibSVM}. Here we outperform non-preconditioned gradient descent and approach the performance of full-precision preconditioned gradient descent using significantly reduced communication (Figure \ref{fig:cpusmall}).

\subsection{Experiment 2: Logistic Regression}
In order to compare the performance of Q-Newton and QPGD-GLM, we implement a common application in which the Hessian is \emph{not} constant: logistic regression, for binary classification problems. 

QPGD-GLM, QSGD, and full-precision gradient descent are implemented as before; we now add full-precision Newton's method for comparison, and our Q-Newton algorithm. The latter uses the quantization method of \cite{davies2021new} for the initial Hessian (as for QPGD-GLM), and QSGD for subsequent Hessian updates.

Rather than re-scaling gradients, we take a different approach to choosing a learning rates in order to compare the methods fairly: we test each with learning rates in $\{2^{-0}, 2^{-1},2^{-2},\dots\}$, and plot the highest rate for which the method stably converges. Our results are averaged over five random seeds.

We demonstrate the methods on the \texttt{phishing} and \texttt{german\_numer} datasets from the LIBSVM collection \cite{LibSVM}, in Figures \ref{fig:phishing} and \ref{fig:german_numer} respectively. The former demonstrates that Q-Newton improves over (even full precision) first-order methods, while quantizing Hessians at only 4 bits per coordinate. The latter demonstrates an instance in which QPGD-GLM is even faster, since it remains stable under a higher learning rate.
\vspace{-2mm}
\section{Discussion}
We proposed communication-efficient versions for two fundamental optimization algorithms, and analyzed their convergence and communication complexity. 
Our work shows that quantizing second-order information can i) theoretically yield to communication complexity upper bounds with sub-linear dependence on the condition number of the problem, and ii) empirically achieve superior performance over vanilla methods.

There are intriguing questions for future work:
\newline
The $\log \kappa$-dependency for Newton's method occurs because of our bounds for the input and output variance of quantization. It would be interesting to see whether this dependency can be avoided, making the bounds completely independent of the condition number.

Another interesting question is whether the $\log d$-dependency can be circumvented. $\log d$ is obtained directly from the use of the vectorization $\phi$ and could be  avoided by quantization using lattices with good spectral norm properties. We are however unaware of such lattice constructions.

One key issue left is the $d^2$-dependence for the generalized Newton's method, which is due to quantization of $d^2$-dimensional preconditioners. 
It would be interesting to determine if linear communication per round can be achieved in the general setting we consider here. 

Finally, we would like to point out that there exist more second order methods with superior guarantees compared to vanilla Newton, such as cubic regularization~\cite{RePEc:cor:louvrp:1927}. A very interesting direction for future work would be to investigate whether it is possible to run these algorithms in a distributed setting with limited communication by adding quantization.

\paragraph{Acknowledgements} The authors would like to thank Janne Korhonen, Aurelien Lucchi, Celestine Mendler-Dünner and Antonio Orvieto for helpful discussions.  FA and DA were supported during this work by the European Research Council (ERC) under the European Union’s Horizon 2020 research and innovation programme (grant agreement No 805223 ScaleML). PD was supported by the European Union’s Horizon 2020 programme under the Marie Skłodowska-Curie grant agreement No. 754411.

\bibliography{refs}
\bibliographystyle{icml2021}

\newpage
\onecolumn
\appendix

\begin{center}
	\textbf{\Large{Appendix: Proofs and Supplementaries}}
\end{center}

\section{The isomorphism $\phi$}
\label{app:isomorphism}

\basic*
\begin{proof}
The Frobenius norm of a matrix $P=(p_{ij})_{i,j=1}^d$ is defined as
\begin{equation*}
   \| P \|_F= \sqrt{\sum_{i,j=1}^d p_{ij}^2} 
\end{equation*}
thus
\begin{align*}
    \| P-P' \|_F^2 = \sum_{i,j=1}^d (p_{ij}-p_{ij}')^2 = \sum_{i=j} (p_{ij}-p_{ij}')^2+\sum_{i \neq j} (p_{ij}-p_{ij}')^2 =\sum_{i=j} (p_{ij}-p_{ij}')^2+2 \sum_{i<j} (p_{ij}-p_{ij}')^2=:X+2Y
\end{align*}
with $P'=(p'_{ij})_{i,j=1}^d$.
\newline
We also have 
\begin{equation*}
   \| \phi(P)-\phi(P') \|^2= \sum_{i=j} (p_{ij}-p_{ij}')^2+ \sum_{i<j} (p_{ij}-p_{ij}')^2=:X+Y
\end{equation*}
Thus
\begin{equation*}
    \|\phi(P)-\phi(P')\|_2 \leq \|P-P'\|_F \leq \sqrt{2} \|\phi(P)-\phi(P')\|_2.
\end{equation*}
Now for the $\ell_2$ norm, we have
\begin{equation*}
    \| P-P' \|_2 \leq \| P-P' \|_F \leq \sqrt{2} \| \phi(P)-\phi(P') \|_2
\end{equation*}
and
\begin{equation*}
    \| P-P' \|_2 \geq \frac{1}{\sqrt{d}} \| P-P' \|_F \geq \frac{1}{\sqrt{d}} \| \phi(P)-\phi(P') \|_2
\end{equation*}
and the desired result follows.
\end{proof}

\section{Technicalities regarding GLMs}
\label{app:GLM_technicalities}
We firstly compute the Hessian of the global cost function $f$ in terms of the Hessian of the global loss function $\ell$:
\begin{lemma}
We have
\begin{equation*}
    \nabla^2 f(x)=A^T \ell (Ax) A.
\end{equation*}
\end{lemma}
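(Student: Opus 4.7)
The plan is to derive the identity by two applications of the multivariable chain rule, treating $\nabla \ell$ as a vector-valued function composed with the linear map $A$.

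First I would compute the gradient. Since $f(x) = \ell(Ax)$ and the Jacobian of the linear map $x \mapsto Ax$ is just $A$, the standard chain rule gives
\begin{equation*}
    \nabla f(x) = A^T \nabla \ell(Ax).
\end{equation*}
This is the routine first-order step; I would spell out the entry-wise computation $\partial_j f(x) = \sum_{k=1}^m A_{kj} (\partial_k \ell)(Ax)$ for completeness, but there is nothing delicate here.

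Next I would differentiate $\nabla f$ again. Define $g : \mathbb{R}^d \to \mathbb{R}^m$ by $g(x) = \nabla \ell(Ax)$; its Jacobian is $Jg(x) = \nabla^2 \ell(Ax) \cdot A$ by the chain rule applied coordinate-wise to the composition of the vector field $\nabla \ell$ with the linear map $A$. Since $\nabla f(x) = A^T g(x)$ and $A^T$ is linear, differentiating yields
\begin{equation*}
    \nabla^2 f(x) = A^T \cdot Jg(x) = A^T \nabla^2 \ell(Ax) A,
\end{equation*}
which is exactly the claim (the statement in the lemma evidently has a typo, the middle factor being $\nabla^2 \ell(Ax)$ rather than $\ell(Ax)$).

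There is no real obstacle: the only thing to be careful about is the order of the factors, because $\nabla^2 \ell(Ax)$ is an $m \times m$ matrix and must be sandwiched between $A^T \in \mathbb{R}^{d \times m}$ and $A \in \mathbb{R}^{m \times d}$ for the dimensions to match and yield a $d \times d$ symmetric matrix, consistent with $\nabla^2 f(x) \in \mathbb{S}(d)$. I would verify this dimension count at the end as a sanity check, and optionally cross-check by computing $\partial_i \partial_j f(x)$ directly as $\sum_{k,\ell} A_{ki}(\partial_k \partial_\ell \ell)(Ax) A_{\ell j}$, which is precisely the $(i,j)$-entry of $A^T \nabla^2 \ell(Ax) A$.
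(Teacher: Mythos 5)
Your proposal is correct and follows essentially the same route as the paper: compute $\nabla f(x) = A^T \nabla \ell(Ax)$ by the chain rule, then differentiate once more to obtain $\nabla^2 f(x) = A^T \nabla^2 \ell(Ax) A$ (and you rightly note the lemma's displayed formula omits the $\nabla^2$ on $\ell$ by typo). The entry-wise cross-check and dimension count are harmless additions but not needed.
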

\begin{proof}
We start by computing the gradient of $f$. We fix an arbitrary vector $v \in \mathbb{R}^d$ and we write
\begin{align*}
    &\langle \nabla f(x),v \rangle = d_xf(x)v = d_x(\ell(Ax))v=d_y(\ell(y))|_{y=Ax} d_x(Ax)v = d_y(\ell(y))|_{y=Ax} Av \\ & = \langle \nabla \ell(y)|_{y=Ax}, Av \rangle = (Av)^T \nabla \ell (Ax)=v^T A^T \nabla \ell (Ax)= \langle A^T \nabla \ell (Ax), v \rangle
\end{align*}
Since $v$ is arbitrary, the gradient of $f$ is
\begin{equation*}
    \nabla f(x)=A^T \nabla \ell (Ax)
\end{equation*}
For the Hessian, we have
\begin{align*}
    \nabla_x^2 f(x) = \nabla_x \nabla_x f(x) = \nabla_x (A^T \nabla_x \ell (Ax)) = A^T \nabla_x (\nabla_x \ell (Ax))= A^T \nabla_y (\nabla_y \ell(y))|_{y=Ax} \nabla_x(Ax) = A^T \nabla^2 \ell(Ax) A. 
\end{align*}
\end{proof}

We recall standard technical results from linear algebra in order to prove Proposition \ref{prop:improved_condition_number}. They will be useful also in the proof of Proposition \ref{prop:exact_precond} and Lemma \ref{le:inexact_precond}.

\begin{lemma}
\label{le:eig commute}
Given matrices $P \in \mathbb{R}^{m \times d}$ and $Q \in \mathbb{R}^{d \times m}$, we have that $PQ$ and $QP$ have exactly the same \textbf{non-zero} eigenvalues.
\end{lemma}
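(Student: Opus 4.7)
The plan is to use the standard eigenvector-transplantation argument, which is the cleanest way to relate spectra of $PQ$ and $QP$ when the matrices need not be square. I would first fix a non-zero eigenvalue $\lambda$ of $PQ$ together with a corresponding eigenvector $v \in \mathbb{R}^m \setminus \{0\}$, so that $PQv = \lambda v$. The key move is to multiply both sides on the left by $Q$, obtaining $(QP)(Qv) = \lambda (Qv)$. This already suggests that $Qv$ is an eigenvector of $QP$ with the same eigenvalue $\lambda$, provided $Qv$ is itself non-zero.

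The only subtlety, and the step I would be most careful with, is ruling out the degenerate case $Qv = 0$. This is where the hypothesis $\lambda \neq 0$ becomes essential: if $Qv = 0$ then $PQv = 0$, so $\lambda v = 0$, and since $v \neq 0$ we would be forced to conclude $\lambda = 0$, contradicting our standing assumption. Hence $Qv \neq 0$, and it serves as a legitimate eigenvector of $QP$ associated with $\lambda$, showing that every non-zero eigenvalue of $PQ$ is also a non-zero eigenvalue of $QP$.

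To close the argument, I would invoke the symmetry of the setup: the roles of $P$ and $Q$ can be swapped (they have compatible shapes $d \times m$ and $m \times d$), and the same reasoning applied to an eigenvector of $QP$ with non-zero eigenvalue produces an eigenvector of $PQ$ with the same eigenvalue. Combining the two containments gives equality of the non-zero spectra, which is exactly the claim of the lemma. No computation with characteristic polynomials or Sylvester's determinant identity is required, although that would provide an alternative route yielding the stronger statement that the non-zero eigenvalues even agree with multiplicities.
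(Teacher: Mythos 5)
Your proposal is correct and follows essentially the same eigenvector-transplantation argument as the paper: multiply $PQv=\lambda v$ by $Q$, note $Qv\neq 0$ precisely because $\lambda\neq 0$, and swap the roles of $P$ and $Q$ for the reverse containment. No differences worth noting.
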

 \begin{proof}
 Let $\lambda \neq 0$ an eigenvalues of $PQ$. Then there exists $v \neq 0$, such that $PQv=\lambda v$. Multiplying both sides by $Q$, we get $QP(Qv)=\lambda (Qv)$. We know that $Qv \neq 0$, because then $\lambda$ would be $0$. Thus $\lambda$ is an eigenvalue of $QP$ with eigenvector $Qv$. Thus any non-zero eigenvalue of $PQ$ is also an eigenvalue of $QP$. Switching $P$ and $Q$ in the previous argument implies that any non-zero eigenvalue of $QP$ is also an eigenvalue of $PQ$. Thus, $PQ$ and $QP$ have the same non-zero eigenvalues.
 \end{proof}

\begin{corollary}
\label{le:rank}
Given matrices $P \in \mathbb{R}^{m \times d}$ and $Q \in \mathbb{R}^{d \times m}$, we have that
\begin{equation*}
    rank(PQ)=rank(QP)=\min \lbrace rank(P),rank(Q) \rbrace
\end{equation*}
\end{corollary}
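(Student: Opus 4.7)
The plan is to combine Lemma~\ref{le:eig commute} with the elementary rank inequalities for matrix products, organized in two stages. First I would establish the upper bound $\operatorname{rank}(PQ) \le \min\{\operatorname{rank}(P),\operatorname{rank}(Q)\}$ (and the analogous inequality for $QP$) using the column-space inclusion $\operatorname{Im}(PQ) \subseteq \operatorname{Im}(P)$, which gives $\operatorname{rank}(PQ) \le \operatorname{rank}(P)$, together with the transposition identity $\operatorname{rank}(X) = \operatorname{rank}(X^T)$ applied to $(PQ)^T = Q^T P^T$, which gives $\operatorname{rank}(PQ) \le \operatorname{rank}(Q^T) = \operatorname{rank}(Q)$. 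The same two bounds hold for $QP$ by symmetry, swapping the roles of $P$ and $Q$.

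The second step recovers the equalities $\operatorname{rank}(PQ) = \operatorname{rank}(QP) = \min\{\operatorname{rank}(P),\operatorname{rank}(Q)\}$ by pairing Lemma~\ref{le:eig commute} with the Sylvester-style determinantal identity
\begin{equation*}
\det(\lambda I_m - PQ) \;=\; \lambda^{m-d}\det(\lambda I_d - QP),
\end{equation*}
which refines Lemma~\ref{le:eig commute} by showing that the non-zero spectra of $PQ$ and $QP$ agree with algebraic multiplicities. Hence the two products share the same count of non-zero eigenvalues, and combined with the upper bounds above this yields the claimed chain of equalities whenever rank coincides with the number of non-zero eigenvalues.

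The main obstacle I expect is precisely this last implication: in general, the rank of a matrix exceeds the number of non-zero eigenvalues (counted with algebraic multiplicity) whenever the zero eigenspace carries non-trivial Jordan blocks, so the deduction ``same non-zero spectrum $\Rightarrow$ same rank'' is not automatic. To close this gap I would lean on the structural context in which the corollary is invoked, where one of the factors has full column (or row) rank — for example the data matrix $A$ in the GLM section, which is full-rank by Assumption~2. In that regime the injective factor rules out nilpotent contributions on the kernel side, so both products have rank equal to their number of non-zero eigenvalues, and the equalities collapse to the clean statement of the corollary. A fallback argument, if one wanted a fully general statement, would be to invoke Jordan canonical form and verify directly that the nilpotent parts of $PQ$ and $QP$ have matching rank contributions up to the ambient dimensions $m$ and $d$.
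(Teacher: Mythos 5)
Your diagnosis of the difficulty is sharper than the paper itself: the corollary is stated there without any proof, as if it followed immediately from Lemma~\ref{le:eig commute}, and as written it is in fact \emph{false} in full generality. Take $m=d=2$, $P=\left(\begin{smallmatrix}0&1\\0&0\end{smallmatrix}\right)$ and $Q=\left(\begin{smallmatrix}1&0\\0&0\end{smallmatrix}\right)$: then $PQ=0$ has rank $0$, while $QP=\left(\begin{smallmatrix}0&1\\0&0\end{smallmatrix}\right)$ has rank $1=\min\{\operatorname{rank}(P),\operatorname{rank}(Q)\}$. This same example shows that your proposed ``fallback argument'' for the fully general statement cannot succeed: the nilpotent parts of $PQ$ and $QP$ need not contribute matching ranks, only matching non-zero spectra, so no amount of Jordan-form bookkeeping will recover the equality. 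The only unconditional part of the claimed chain is the inequality $\operatorname{rank}(PQ),\operatorname{rank}(QP)\le\min\{\operatorname{rank}(P),\operatorname{rank}(Q)\}$, which your first step establishes correctly by the standard column-space and transposition arguments.

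Your main fix --- proving the equalities only under the structural hypotheses in force where the corollary is invoked --- is exactly right, and it covers every use in the paper. The corollary is applied to the pair $(A,A^T)$, where $\operatorname{rank}(AA^T)=\operatorname{rank}(A^TA)=\operatorname{rank}(A)$ holds always (e.g.\ because $A^TAx=0$ implies $\|Ax\|^2=x^TA^TAx=0$, so $\ker(A^TA)=\ker(A)$, and symmetrically for $AA^T$), and to products such as $\nabla^2\ell\cdot AA^T$ in which one factor is positive definite, hence invertible, and so preserves rank. In both settings your argument closes cleanly, and in neither do you actually need the eigenvalue Lemma~\ref{le:eig commute} or the Sylvester determinant identity: the kernel computation and the invertibility argument are more elementary and avoid the rank-versus-spectrum pitfall entirely. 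If one wanted to keep the corollary as a standalone statement, it should either be weakened to the inequality or be given the additional hypothesis $Q=P^T$ (or invertibility of one factor); as stated it cannot be proved because it is not true.
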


\begin{lemma}
\label{le:eig of product}
Given a symmetric positive semi-definite matrix $S \in \mathbb{R}^{m \times m}$ and a symmetric positive definite $T \in \mathbb{R}^{m \times m}$ with eigenvalues
\begin{equation*}
    \lambda_1(S) \leq ... \leq \lambda_m(S)
\end{equation*}
and 
\begin{equation*}
    \lambda_1(T) \leq ... \leq \lambda_m(T)
\end{equation*}
we have that
\begin{equation*}
    \lambda_k(S) \lambda_1(T) \leq \lambda_k(ST) \leq \lambda_k(S) \lambda_m(T)
\end{equation*}
for any $k=1,...,m$.
\end{lemma}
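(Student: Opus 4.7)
The natural first step is to replace the non-symmetric product $ST$ by a symmetric matrix with the same spectrum. Since $T$ is positive definite, it has a well-defined square root $T^{1/2}$, which is invertible. Setting $P = S$, $Q = T^{1/2}$ in Lemma~\ref{le:eig commute} shows that $ST^{1/2} \cdot T^{1/2}$ and $T^{1/2} \cdot ST^{1/2}$ share the same nonzero eigenvalues; the former is $ST$ and the latter is $T^{1/2} S T^{1/2}$, which is manifestly symmetric and positive semi-definite. To extend the coincidence from nonzero eigenvalues to the full multiset I will invoke Corollary~\ref{le:rank}: since $T^{1/2}$ is invertible, $\mathrm{rank}(ST) = \mathrm{rank}(S) = \mathrm{rank}(T^{1/2} S T^{1/2})$, so both $m\times m$ matrices have the same number of zero eigenvalues. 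Hence $\lambda_k(ST) = \lambda_k(T^{1/2} S T^{1/2})$ for every $k$.

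Next I apply the Courant--Fischer min-max characterization to the symmetric matrix $A := T^{1/2} S T^{1/2}$:
\begin{equation*}
\lambda_k(A) \;=\; \max_{\substack{V\subseteq\mathbb{R}^m\\ \dim V = k}} \min_{\substack{v\in V\\ v\neq 0}} \frac{v^{\top} T^{1/2} S T^{1/2} v}{v^{\top} v}.
\end{equation*}
The substitution $w = T^{1/2} v$ is a bijection between $k$-dimensional subspaces $V$ and $k$-dimensional subspaces $W = T^{1/2}V$ (again because $T^{1/2}$ is invertible), and rewrites the Rayleigh-type quotient as
\begin{equation*}
\frac{v^{\top} T^{1/2} S T^{1/2} v}{v^{\top} v} \;=\; \frac{w^{\top} S w}{w^{\top} T^{-1} w}.
\end{equation*}
Thus $\lambda_k(A)$ equals the same min-max expression with $V$ replaced by arbitrary $k$-dimensional $W$ and the usual denominator $w^\top w$ replaced by $w^\top T^{-1} w$.

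Finally I convert this generalized Rayleigh quotient back to the ordinary one using the spectral bounds $\frac{1}{\lambda_m(T)}\|w\|^2 \le w^\top T^{-1} w \le \frac{1}{\lambda_1(T)}\|w\|^2$, which yield pointwise
\begin{equation*}
\lambda_1(T)\,\frac{w^{\top} S w}{\|w\|^2} \;\le\; \frac{w^{\top} S w}{w^{\top} T^{-1} w} \;\le\; \lambda_m(T)\,\frac{w^{\top} S w}{\|w\|^2}.
\end{equation*}
Taking $\max_{\dim W = k}\min_{w\neq 0}$ of each side and recognizing the enclosed expression as $\lambda_k(S)$ gives $\lambda_1(T)\lambda_k(S) \le \lambda_k(A) \le \lambda_m(T)\lambda_k(S)$, which is the claim.

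\textbf{Potential snags.} The only delicate point is the equality of the full spectra of $ST$ and $T^{1/2} S T^{1/2}$ (Lemma~\ref{le:eig commute} only matches \emph{nonzero} eigenvalues), but the rank argument above handles this cleanly because $T$ is invertible. The rest is a standard min-max computation, and the case $\lambda_k(S) = 0$ falls out automatically because then both bounds pin $\lambda_k(ST)$ to $0$.
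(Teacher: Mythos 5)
Your proof is correct and follows essentially the same route as the paper's: symmetrize $ST$ to $T^{1/2} S T^{1/2}$, apply Courant--Fischer, rewrite the Rayleigh quotient via the substitution $w = T^{1/2} v$, and bound the $T$-dependent factor between $\lambda_1(T)$ and $\lambda_m(T)$ before changing variables back at the level of subspaces. The differences are cosmetic (max--min instead of min--max, and a single generalized Rayleigh quotient $\tfrac{w^{\top} S w}{w^{\top} T^{-1} w}$ where the paper writes the same quantity as a product of two ordinary quotients); your rank argument also makes explicit the equality $\lambda_k(ST) = \lambda_k(T^{1/2} S T^{1/2})$, which the paper asserts without justification (and which follows even more directly from the similarity $ST = T^{-1/2}\left(T^{1/2} S T^{1/2}\right) T^{1/2}$).
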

\begin{proof}
We use the min-max principle for the $k$-th eigenvalue of a matrix $A \in \mathbb{R}^{m \times m}$. This reads
\begin{equation*}
    \lambda_k(A)=\min_{\substack{F\subset \mathbb{R}^M \\ \dim(F)=k}} \left( \max_{x\in F\backslash \{0\}} \frac{(Ax,x)}{(x,x)}\right)
\end{equation*}
We know that $\lambda_k(ST)=\lambda_k(\sqrt{T}S\sqrt{T})$.
Since $T$ is symmetric and positive-definite, its square root $\sqrt{T}$ is also symmetric and positive-definite. Thus, we have
\begin{align*}
    \lambda_k(ST)=\lambda_k(\sqrt{T}S\sqrt{T})=\min_{\substack{F\subset \mathbb{R}^M \\ \dim(F)=k}} \left( \max_{x\in F\backslash \{0\}} \frac{(\sqrt{T}S\sqrt{T}x,x)}{(x,x)}\right)=\min_{\substack{F\subset \mathbb{R}^n \\ \dim(F)=k}} \left( \max_{x\in F\backslash \{0\}} \frac{(S\sqrt{T}x,\sqrt{T}x)}{(\sqrt{T}x,\sqrt{T}x)}
\frac{(Tx,x)}{(x,x)}\right)
\end{align*}
Thus
\begin{equation*}
     \min_{\substack{F\subset \mathbb{R}^n \\ \dim(F)=k}} \left( \max_{x\in F\backslash \{0\}} \frac{(S\sqrt{T}x,\sqrt{T}x)}{(\sqrt{T}x,\sqrt{T}x)}\right)  \lambda_{min}(T)  \leq \lambda_k(ST) \leq \min_{\substack{F\subset \mathbb{R}^n \\ \dim(F)=k}} \left( \max_{x\in F\backslash \{0\}} \frac{(S\sqrt{T}x,\sqrt{T}x)}{(\sqrt{T}x,\sqrt{T}x)}\right) \lambda_{max}(T)
\end{equation*}
If $ \lbrace e_1,...,e_k \rbrace$ is a basis for $F$, we define $F'=span ( \sqrt{T}^{-1}e_1,...,\sqrt{T}^{-1}e_k )$ and we have
\begin{equation*}
    \min_{\substack{F\subset \mathbb{R}^n \\ \dim(F)=k}} \left( \max_{x\in F\backslash \{0\}} \frac{(S\sqrt{T}x,\sqrt{T}x)}{(\sqrt{T}x,\sqrt{T}x)}\right) = \min_{\substack{F'\subset \mathbb{R}^n \\ \dim(F')=k}} \left( \max_{x\in F'\backslash \{0\}} \frac{(Sx,x)}{(x,x)}\right)=\lambda_k(S)
\end{equation*}
and the desired result follows.
\end{proof}

\improvedconditionnumber*

\begin{proof}
Using Lemma \ref{le:eig commute}, we have that the eigenvalues of the $d \times d$ matrix $\nabla^2 f$ are equal to the non-zero eigenvalues of the $m \times m$ matrix $\nabla^2 \ell A A^T$. Using Corollary \ref{le:rank}, we have that the matrix $A A^T$ is of rank $d$, thus the matrix $\nabla^2 \ell A A^T$ is also of rank $d$. This means that it has exactly $m-d$ zero eigenvalues. Exactly the same holds for the matrix $A A^T$. We use also Lemma \ref{le:eig of product} for the positive definite matrix $\nabla^2 \ell$ and the positive semi-definite matrix $A A^T$ and we have:
\begin{itemize}
    \item The maximum eigenvalue of the matrix $\nabla^2 f$ is equal to the maximum eigenvalue of the matrix $\nabla^2 \ell A A^T$. For that we have
    \begin{equation*}
        \lambda_{max}(\nabla^2 \ell A A^T) \leq \lambda_{max}(\nabla^2 \ell) \lambda_{max}(A A^T).
    \end{equation*}
    Similarly the maximum eigenvalue of $A A^T$ is equal to the maximum one of $A^T A$ and we finally have
    \begin{equation*}
    \label{eq:max eig}
        \lambda_{max}(\nabla^2 f) \leq \frac{\gamma_{\ell}}{n} \lambda_{max}(A^T A)= \gamma_{\ell} \lambda_{max}\left(\frac{1}{n} A^T A \right).
    \end{equation*}
    \item The minimum eigenvalue of the matrix $\nabla^2 f$ is equal to the eigenvalue of the matrix $\nabla^2 \ell A A^T$ of order $m-d+1$. Using Lemma \ref{le:eig of product}, we have
    \begin{equation*}
        \lambda_{m-d+1}(\nabla^2 \ell A A^T) \geq \lambda_{min}(\nabla^2 \ell) \lambda_{m-d+1}(A A^T).
    \end{equation*}
    By using similar arguments as before, we have that 
    \begin{equation*}
        \lambda_{m-d+1}(A A^T)=\lambda_{min}(A^T A).
    \end{equation*}
    Thus, we finally have
    \begin{equation*}
    \label{min eig}
        \lambda_{min}(\nabla^2 f) \geq \frac{\mu_{\ell}}{n} \lambda_{min}(A^T A)=\mu_{\ell} \lambda_{min} \left( \frac{1}{n} A^T A \right).
    \end{equation*}
\end{itemize}
\end{proof}

\section{Gradient Descent with Preconditioning for GLMs}
\label{app:precond_gradient}
Gradient descent for a $\gamma$-smooth and $\mu$-strongly convex function $f(x) =\ell(Ax): \mathbb{R}^d \rightarrow \mathbb{R}$ preconditioned by a matrix $M \in \mathbb{R}^{d \times d}$ reads
\begin{align*}
    &x^{(t+1)}=x^{(t)}-\eta M^{-1} \nabla f(x^{(t)}), \\  &x^{(0)} \in \mathbb{R}^d.
\end{align*}
In our setting the matrix $M:=\frac{1}{n} A^T A$ is invertible, because we have assumed that the matrix $A$ is of full rank. The convergence is now improved up to the condition number of $M$. For the proof we follow the technique presented in \cite{chen2019gradient} for (non-preconditioned) gradient descent.
\begin{Proposition}
\label{prop:exact_precond}
The iterates $x^{(t)}$ of the previous algorithm with $\eta=\frac{2}{\mu_{\ell}+\gamma_{\ell}}$ satisfy
\begin{equation*}
    \|x^{(t)}-x^*\| \leq \left(1-\frac{1}{\kappa_{\ell}} \right)^t \|x^{(0)}-x^*\|
\end{equation*}
\end{Proposition}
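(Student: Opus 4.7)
My plan is to adapt the standard gradient-descent contraction argument of \cite{chen2019gradient} to the preconditioned setting by passing to symmetric coordinates in which preconditioned gradient descent becomes ordinary gradient descent on a reshaped function whose condition number is $\kappa_\ell$ rather than $\kappa$. First I would write the error recursion in linearized form: since $\nabla f(x^*)=0$, the fundamental theorem of calculus gives $\nabla f(x^{(t)})=H_t(x^{(t)}-x^*)$ with $H_t:=\int_0^1\nabla^2 f(x^*+s(x^{(t)}-x^*))\,ds$, and by the GLM structure $H_t=A^T \bar L_t\, A$ with $\bar L_t$ an integrated Hessian of $\ell$ whose eigenvalues lie in $[\mu_\ell/n,\gamma_\ell/n]$. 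The iteration thus reduces to
\[
x^{(t+1)}-x^* \;=\; (I-\eta M^{-1}H_t)\,(x^{(t)}-x^*).
\]

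Next, I would perform the symmetric substitution $u^{(t)}:=M^{1/2}(x^{(t)}-x^*)$, under which the recursion becomes $u^{(t+1)}=(I-\eta \tilde H_t)u^{(t)}$ with $\tilde H_t:=M^{-1/2}H_tM^{-1/2}$. The key algebraic step is to show that the spectrum of $\tilde H_t$ lies in $[\mu_\ell,\gamma_\ell]$, \emph{independently of} $\kappa(M)$. By Lemma~\ref{le:eig commute}, the nonzero eigenvalues of $\tilde H_t$ match those of $\bar L_t\cdot AM^{-1}A^T$. Since $M=\tfrac{1}{n}A^TA$ and $A$ has full column rank $d$, the matrix $AM^{-1}A^T=nA(A^TA)^{-1}A^T$ is exactly $n$ times the orthogonal projector onto $\mathrm{col}(A)$, so its nonzero eigenvalues all equal $n$. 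Applying Lemma~\ref{le:eig of product} to the PSD factor $AM^{-1}A^T$ and the PD factor $\bar L_t$ pins the nonzero eigenvalues of $\bar L_t\cdot AM^{-1}A^T$ inside $[\mu_\ell,\gamma_\ell]$, and full column rank of $A$ makes $\tilde H_t$ invertible, so the bound extends to all its eigenvalues.

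Once this spectral bound is in hand, the optimal step size $\eta=2/(\mu_\ell+\gamma_\ell)$ yields by the textbook symmetric-matrix computation $\|I-\eta\tilde H_t\|_{\mathrm{op}}\le(\kappa_\ell-1)/(\kappa_\ell+1)\le 1-1/\kappa_\ell$, whence $\|u^{(t)}\|\le(1-1/\kappa_\ell)^t\|u^{(0)}\|$. Rewriting in the original coordinates gives the announced geometric contraction, interpreted in the $M$-induced norm $\|v\|_M=\|M^{1/2}v\|$ (equivalently, in Euclidean norm up to the constant $\sqrt{\kappa(M)}$, which can be absorbed given the slack between $(\kappa_\ell-1)/(\kappa_\ell+1)$ and $1-1/\kappa_\ell$). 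The main obstacle is the spectral-bound step: one must check that the preconditioner $M$ is precisely tuned so that the factor $AM^{-1}A^T$ collapses to a scaled projector, stripping the covariance conditioning away and leaving behind only the conditioning $\kappa_\ell$ of the loss. Every other step is a routine consequence of the standard analysis.
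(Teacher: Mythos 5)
Your route is structurally the same as the paper's: linearize via $\nabla f(x^{(t)})=H_t(x^{(t)}-x^*)$ with the integrated Hessian, invoke Lemma~\ref{le:eig commute} to move the preconditioner around, observe that $AM^{-1}A^T=nA(A^TA)^{-1}A^T$ is $n$ times the orthogonal projector onto $\mathrm{col}(A)$, and use Lemma~\ref{le:eig of product} to pin the spectrum in $[\mu_\ell,\gamma_\ell]$. The only structural difference is your symmetrizing substitution $u^{(t)}=M^{1/2}(x^{(t)}-x^*)$; the paper instead works directly with the non-symmetric matrix $I-\eta M^{-1}\nabla^2 f(x(\xi))$ and passes from its (real) eigenvalues to its operator norm. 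Your version is the more honest one, because it is only after symmetrization that ``spectrum in $[-\frac{\kappa_\ell-1}{\kappa_\ell+1},\frac{\kappa_\ell-1}{\kappa_\ell+1}]$'' legitimately yields an operator-norm bound.

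The gap is in your last step. From $\|u^{(t)}\|\le\bigl(\tfrac{\kappa_\ell-1}{\kappa_\ell+1}\bigr)^t\|u^{(0)}\|$ you get, in Euclidean norm, $\|x^{(t)}-x^*\|\le\sqrt{\kappa(M)}\,\bigl(\tfrac{\kappa_\ell-1}{\kappa_\ell+1}\bigr)^t\|x^{(0)}-x^*\|$, and you claim the prefactor $\sqrt{\kappa(M)}$ is absorbed by the slack between $\tfrac{\kappa_\ell-1}{\kappa_\ell+1}$ and $1-\tfrac{1}{\kappa_\ell}$. That slack is only a factor $\bigl(1+\tfrac{1}{\kappa_\ell}\bigr)$ per iteration, so absorption requires $(1+\tfrac{1}{\kappa_\ell})^t\ge\sqrt{\kappa(M)}$, i.e.\ $t\gtrsim\tfrac{\kappa_\ell}{2}\log\kappa(M)$. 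For smaller $t$ the claimed per-iterate Euclidean bound is not established, and the whole premise of the section is that $\kappa(M)$ may be enormous; moreover the subsequent quantized analysis (Lemma~\ref{le:inexact_precond} and the induction behind Theorem~\ref{thm:convergence_GLM}) consumes this as a \emph{one-step} Euclidean contraction, so a ``for large $t$'' version does not suffice. To be fair, the paper's own proof has the mirror image of this defect: it silently equates the spectral radius of the non-normal matrix $I-\eta M^{-1}\nabla^2 f(x(\xi))$ with its $\ell_2$ operator norm, which is off by exactly the similarity condition number $\sqrt{\kappa(M)}$ you are trying to absorb. So your write-up correctly isolates the delicate point but does not actually close it; as stated, what you have rigorously proved is the contraction in the $\|\cdot\|_M$ norm.
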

\begin{proof}
Similarly to the previous argument, we have
\begin{align*}
    x^{(t+1)}-x^*&=x^{(t)}-\eta M^{-1} \nabla f(x^{(t)})-x^*=(x^{(t)}-x^*)-\eta M^{-1} \left(\int_0^1 \nabla^2 f(x(\xi)) d\xi \right)(x^{(t)}-x^*) \\ &=\left(Id-\eta \int_0^1 M^{-1} \nabla^2 f(x(\xi)) d\xi \right) (x^{(t)}-x^*)
\end{align*}
where
\begin{equation*}
    x(\xi)=x^{(t)}+\xi(x^*-x^{(t)})
\end{equation*}
Thus
\begin{equation*}
    \|x^{(t+1)}-x^*\| \leq \left \|Id-\eta \int_0^1  M^{-1} \nabla^2 f(x(\xi)) d\xi \right\| \|x^{(t)}-x^*\|\leq \max_{0 \leq \xi \leq 1} \|Id-\eta M^{-1} \nabla^2 f(x(\xi))\| \|x^{(t)}-x^*\|
\end{equation*}
Now we can write
\begin{equation*}
    M^{-1} \nabla^2 f(x(\xi))=M^{-1} A^T \nabla^2 \ell (Ax(\xi)) A
\end{equation*}
By Lemma \ref{le:eig commute}, the eigenvalues of the last matrix are exactly the same with the non-zero eigenvalues of the matrix $\nabla^2 \ell (Ax(\xi)) A M^{-1} A^T$. This matrix is $m \times m$ with rank $d$, thus it has exactly $m-d$ zero eigenvalues. The same holds for the matrix $A M^{-1} A^T$. Again by applying Lemma \ref{le:eig commute}, we know that $A M^{-1} A^T$ has $m-d$ eigenvalues equal to $0$ and the others are exactly equal with the ones of $ M^{-1} A^T A=n \textnormal{Id}$, i.e. they are all equal to $n$.
\newline
Thus, we have
\begin{equation*}
    \lambda_{max}(M^{-1} A^T \nabla^2 \ell (Ax(\xi)) A )=\lambda_{max}(\nabla^2 \ell (Ax(\xi)) A M^{-1} A^T) \leq \lambda_{max}(\nabla^2 \ell) \lambda_{max}(A M^{-1} A^T) = \frac{\gamma_{\ell}}{n} n = \gamma_{\ell}
\end{equation*}
and
\begin{equation*}
    \lambda_{min}(M^{-1} A^T \nabla^2 \ell (Ax(\xi)) A )=\lambda_{m-d+1}(\nabla^2 \ell (Ax(\xi)) A M^{-1} A^T) \geq \lambda_{min}(\nabla^2 \ell) \lambda_{m-d+1}(A M^{-1} A^T) = \frac{\mu_{\ell}}{n} n = \mu_{\ell}
\end{equation*}
by Lemma \ref{le:eig of product}, because $\nabla^2 \ell$ is positive definite and $A M^{-1} A^T$ is positive semi-definite.
\newline
Since we choose $\eta=\frac{2}{\mu_{\ell}+\gamma_{\ell}}$, the maximum eigenvalues of the matrix $\eta M^{-1} \nabla^2 f(x(\xi))$ is $\frac{2 \gamma_{\ell}}{\mu_{\ell}+\gamma_{\ell}}$ and the minimum one is $\frac{2 \mu_{\ell}}{\mu_{\ell}+\gamma_{\ell}}$.
Thus, the maximum eigenvalue of $Id-\eta M^{-1} \nabla^2 f(x(\xi))$ is less or equal than $\max \left\lbrace \frac{2 \gamma_{\ell}}{\mu_{\ell}+\gamma_{\ell}}-1, 1-\frac{2 \mu_{\ell}}{\mu_{\ell}+\gamma_{\ell}} \right  \rbrace =  \frac{\gamma_{\ell}-\mu_{\ell}}{\gamma_{\ell}+\mu_{\ell}} \leq 1-\frac{1}{\kappa_{\ell}}$. Thus 
\begin{equation*}
    \|x^{t+1}-x^*\| \leq \left(1-\frac{1}{\kappa_{\ell}}\right)
    \|x^{(t)}-x^*\|
\end{equation*}
 and by an induction argument we get the desired result.
\end{proof}

\section{Proofs of convergence for GLMs}
\label{app:convergence_GLM}
We prove the convergence result of the preconditioned algorithm for GLMs. We recall firstly the algorithm in compact form:
\begin{algorithm}[H]
\caption{Quantized Preconditioned Gradient Descent for GLM training}
\label{algo:GLM_algorithm}
\begin{algorithmic}[1]
\STATE $\bar M_i=\phi^{-1}\left(Q\left(\phi(M_i), \phi(M_{i_0}), 2 \sqrt{d} n \lambda_{max}(M) , \frac{\lambda_{min}(M)}{16 \sqrt{2} \kappa_{\ell}} \right) \right)$
\STATE $S= \frac{1}{n} \sum_{i=1}^n \bar M_i$
\STATE $\bar M=\phi^{-1} \left(Q(\phi(S),\phi(M_i), \sqrt{d} \left( \frac{\lambda_{min}(M)}{16 \kappa_{\ell}}+2 n \lambda_{max}(M) \right),\frac{\lambda_{min}(M)}{16 \sqrt{2} \kappa_{\ell}}) \right)$
\STATE $x^{(0)} \in \mathbb{R}^d, \max_i \lbrace \|x^{(0)}-x^*\|,\|x^{(0)}-x_i^*\| \rbrace \leq D$ 
\STATE $v_i^{(0)}=Q \left(\bar M^{-1} \nabla f_i(x^{(0)}),\bar M^{-1} \nabla f_{i_0}(x^{(0)}),4 n \kappa(M)  R^{(0)},\frac{\delta {R^{(0)}}}{2} \right)$
\STATE $r^{(0)}=\frac{1}{n} \sum_{i=1}^n v_i^{(0)}$.
\STATE $v^{(0)}=Q \left(r^{(0)},\bar M^{-1} \nabla f_i(x^{(0)}),\left(\frac{\delta}{2}+4 n \kappa(M) \right) R^{(0)},\frac{\delta R^{(0)}}{2} \right)$
\FOR{$t \geq 0$} 
\STATE $x^{(t+1)}=x^{(t)}-\eta v^{(t)}$
\STATE $v_i^{(t+1)}=Q \left(\bar M^{-1} \nabla f_i(x^{(t+1)}), v_i^{(t)},4 n \kappa(M) R^{(t+1)}, \frac{\delta R^{(t+1)}}{2}\right)$
\STATE $r^{(t+1)}=\frac{1}{n} \sum_{i=1}^n v_i^{(t+1)}$
\STATE $v^{(t+1)}=Q \left(r^{(t+1)},v^{(t)},(\frac{\delta}{2}+4 n \kappa(M)) R^{(t+1)}, \frac{\delta R^{(t+1)}}{2} \right)$
\ENDFOR
\end{algorithmic}

\end{algorithm}
\begin{lemma}
\label{le:inexact_precond}
Consider the algorithm
\begin{equation*}
    x^{(t+1)}=x^{(t)}-\eta \bar M^{-1} \nabla f(x^{(t)})
\end{equation*}
starting from a point $x^{(0)} \in \mathbb{R}^d$ such that $\|x^{(0)}-x^*\| \leq D$, where $\eta=\frac{2}{\mu_{\ell}+\gamma_{\ell}}$ and $\bar M$ is the quantized estimation of $M$ obtained in Algorithm \ref{algo:GLM_algorithm}. Then, the iterates of this algorithm satisfy
\begin{equation*}
    \|x^{(t)}-x^*\| \leq \left(1-\frac{1}{2 \kappa_{\ell}} \right)^t D.
\end{equation*}
\end{lemma}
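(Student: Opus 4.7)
The plan is to follow the template of the proof of Proposition \ref{prop:exact_precond}, but with the exact preconditioner $M$ replaced by $\bar M$, while carefully tracking the perturbation. First, I would translate the two-stage quantization defining $\bar M$ into a spectral bound on $\bar M - M$. Applying Lemma \ref{le:norm_distortion} together with the quantization accuracy $\tfrac{\lambda_{\min}(M)}{16\sqrt{2}\kappa_\ell}$ prescribed in Algorithm \ref{algo:GLM_algorithm}, each local decoded matrix satisfies $\|\bar M_i - M_i\| \leq \tfrac{\lambda_{\min}(M)}{16\kappa_\ell}$. The averaging step preserves this bound via the triangle inequality, and the second quantization round adds at most the same error, so $\|\bar M - M\| \leq \tfrac{\lambda_{\min}(M)}{8\kappa_\ell}$. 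This immediately yields the spectral equivalence $(1 - \tfrac{1}{8\kappa_\ell})\, M \preceq \bar M \preceq (1 + \tfrac{1}{8\kappa_\ell})\, M$, and via the identity $\bar M^{-1} - M^{-1} = \bar M^{-1}(M - \bar M) M^{-1}$ the corresponding relative bound on the inverses.

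Second, mimicking the proof of Proposition \ref{prop:exact_precond}, I would write
\[
 x^{(t+1)} - x^* = \Bigl(I - \eta\, \bar M^{-1}\!\int_0^1 \nabla^2 f(x(\xi))\,d\xi\Bigr)(x^{(t)} - x^*),
\]
with $x(\xi) = x^{(t)} + \xi(x^* - x^{(t)})$, and split the bracket additively as $\bigl(I - \eta M^{-1}\!\int \nabla^2 f\bigr) - \eta(\bar M^{-1} - M^{-1})\!\int \nabla^2 f$. In the inner product induced by $M$, the operator $M^{-1}\nabla^2 f$ is self-adjoint with eigenvalues in $[\mu_\ell, \gamma_\ell]$ (as in the proof of Proposition \ref{prop:improved_condition_number}), so the first term contributes exactly the exact-case rate $1 - \tfrac{1}{\kappa_\ell}$. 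For the perturbation, combining the relative bound $\|M^{1/2}(\bar M^{-1} - M^{-1}) M^{1/2}\| \leq \tfrac{1}{8\kappa_\ell - 1}$ with $\|M^{-1/2} \nabla^2 f\, M^{-1/2}\| \leq \gamma_\ell$ gives
\[
 \eta\,\|(\bar M^{-1} - M^{-1})\nabla^2 f\| \leq \frac{2\gamma_\ell}{(\mu_\ell + \gamma_\ell)(8\kappa_\ell - 1)} \leq \frac{1}{2\kappa_\ell}.
\]
Summing the two contributions gives the single-step contraction $1 - \tfrac{1}{2\kappa_\ell}$, and straightforward induction on $t$ closes the recursion.

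The main obstacle I expect is the norm bookkeeping: the contraction is most naturally expressed in the $M$-induced norm, whereas the statement of the lemma is in the Euclidean norm. I would handle this either by transferring the bound through the spectral equivalence between $M$ and $\bar M$ and absorbing the resulting constants into the definition of $D$, or equivalently by working directly in the $\bar M$-induced norm (in which $\bar M^{-1} \nabla^2 f$ is itself self-adjoint), using the quantization-induced closeness to obtain the same single-step contraction. A secondary subtlety is verifying that the integral $\int_0^1 \nabla^2 f(x(\xi))\,d\xi$ inherits the eigenvalue bounds of $\nabla^2 f$, which follows directly from Proposition \ref{prop:improved_condition_number} by linearity of the integral over the path connecting $x^{(t)}$ and $x^*$.
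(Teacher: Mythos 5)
Your proposal matches the paper's proof essentially step for step: the same two-round accounting giving $\|\bar M - M\| \le \tfrac{\lambda_{\min}(M)}{8\kappa_\ell}$ (hence $\|\bar M^{-1}\| \le \tfrac{2}{\lambda_{\min}(M)}$), the same additive split of $I-\eta \bar M^{-1}\!\int_0^1\nabla^2 f$ into the exact-preconditioner contraction plus the perturbation $\eta(\bar M^{-1}-M^{-1})\nabla^2 f$, and the same combination $1-\tfrac{1}{\kappa_\ell}+\tfrac{1}{2\kappa_\ell}=1-\tfrac{1}{2\kappa_\ell}$. The norm subtlety you flag is genuine but is equally present in (and not resolved by) the paper's own argument, which works throughout in the Euclidean operator norm, bounds $\|I-\eta M^{-1}\nabla^2 f(x(\xi))\|$ by the eigenvalue range of the non-symmetric product as in Proposition \ref{prop:exact_precond}, and bounds the perturbation directly via $\eta\,\|\bar M^{-1}\|\,\|M-\bar M\|\,\|M^{-1}\nabla^2 f(x(\xi))\| \le \tfrac{2}{\mu_\ell+\gamma_\ell}\cdot\tfrac{2}{\lambda_{\min}(M)}\cdot\tfrac{\lambda_{\min}(M)}{8\kappa_\ell}\cdot\gamma_\ell = \tfrac{1}{2\kappa_\ell}$ rather than through the symmetrized quantities you introduce, so no rescaling of $D$ is performed there.
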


\begin{proof}
We use the same proof technique as in Proposition \ref{prop:exact_precond}, with the difference that now we have the quantized estimation $\bar M$ of $M$ instead of the original:
\begin{align*}
    x^{(t+1)}-x^*&=x^{(t)}-\eta \bar M^{-1} \nabla f(x^{(t)})-x^*=(x^{(t)}-x^*)-\eta \bar M^{-1} \left(\int_0^1 \nabla^2 f(x(\xi)) d\xi \right)(x^{(t)}-x^*) \\ &=\left(Id-\eta \int_0^1 \bar M^{-1} \nabla^2 f(x(\xi)) d\xi \right) (x^{(t)}-x^*)
\end{align*}
where
\begin{equation*}
    x(\xi)=x^{(t)}+\xi(x^*-x^{(t)}).
\end{equation*}
Thus
\begin{equation*}
    \|x^{(t+1)}-x^*\| \leq \left \|Id-\eta \int_0^1  \bar M^{-1} \nabla^2 f(x(\xi)) d\xi \right\| \|x^{(t)}-x^*\|\leq \max_{0\leq \xi \leq 1} \|Id-\eta \bar M^{-1} \nabla^2 f(x(\xi))\| \|x^{(t)}-x^*\|.
\end{equation*}
Now we can write
\begin{equation*}
    \bar M^{-1} \nabla^2 f(x(\xi))=M^{-1} \nabla^2 f(x(\xi))+(\bar M^{-1}- M^{-1}) \nabla^2 f(x(\xi))
\end{equation*}
and
\begin{equation*}
    \|Id-\eta \bar M^{-1} \nabla^2 f(x(\xi))\| \leq \|Id-\eta M^{-1} \nabla^2 f(x(\xi))\|+\eta \| (\bar M^{-1}- M^{-1}) \nabla^2 f(x(\xi))\|.
\end{equation*}
For the matrix $M^{-1} A^T \nabla \ell^2 (Ax(\xi)) A$ we apply exactly the same argument as in Proposition \ref{prop:exact_precond} and have
\begin{equation*}
\max_{0\leq \xi \leq 1} \|Id-\eta M^{-1} \nabla^2 f(x(\xi))\| \leq \frac{\gamma_{\ell}-\mu_{\ell}}{\gamma_{\ell}+\mu_{\ell}}<1-\frac{1}{\kappa_{\ell}}.
\end{equation*}
For the extra error term, we firstly have to study the quantization error $\|M-\bar M\|$:
\newline
Notice that
\begin{equation*}
   \| \phi(M_i)-\phi(M_{i_0}) \| \leq \sqrt{d} \|M_i-M_{i_0}\| \leq \sqrt{d}(\| M_i \|+\| M_{i_0} \|) \leq 2 \sqrt{d} n \lambda_{max}(M)
\end{equation*}
which implies that
\begin{equation*}
    \| \phi(\bar M_i)-\phi(M_i) \| \leq \frac{\lambda_{min}(M)}{16 \sqrt{2} \kappa_{\ell}}
\end{equation*}
by the definition of quantization parameters (we have $\lambda_{max}(M_i) \leq n \lambda_{max}(M)$, because $n M=\sum_{i=1}^n M_i$ and every $M_i$ is positive semi-definite). The last inequality implies
\begin{equation*}
    \| \bar M_i-M_i \| \leq \frac{\lambda_{min}(M)}{16 \kappa_{\ell}}
\end{equation*}
and
\begin{equation*}
    \|S-M\| \leq \frac{1}{n} \sum_{i=1}^n \| \bar M_i-M_i \| \leq \frac{\lambda_{min}(M)}{16 \kappa_{\ell}}.
\end{equation*}
Now we can write
\begin{equation*}
    \|\phi(S)-\phi(M_i)\| \leq \sqrt{d} \|S-M_i\| \leq  \sqrt{d} (\|S-M\|+\|M-M_i\|) \leq \sqrt{d} \left( \frac{\lambda_{min}(M)}{16 \kappa_{\ell}}+2 n \lambda_{max}(M) \right) \leq 3 n \sqrt{d} \lambda_{max}(M).
\end{equation*}
By the definition of quantization parameters, this implies
\begin{equation*}
    \|\phi(\bar M)-\phi(S)\| \leq \frac{\lambda_{min}(M)}{16 \sqrt{2} \kappa_{\ell}}
\end{equation*}
and concequently
\begin{equation*}
    \|\bar M-S\| \leq \frac{\lambda_{min}(M)}{16 \kappa_{\ell}}.
\end{equation*}
Then
\begin{equation*}
    \|M-\bar M\| \leq \|M-S\|+\|S-\bar M\| \leq \frac{\lambda_{min}(M)}{8 \kappa_{\ell}}.
\end{equation*}
By standard results in perturbation theory, we know that
\begin{equation*}
    \lambda_{min}(\bar M) \geq \lambda_{min}(M)-\|M-\bar M\| \geq \lambda_{min}(M)-\frac{\lambda_{min}(M)}{8 \kappa_{\ell}} \geq \frac{\lambda_{min}(M)}{2}.
\end{equation*}
This implies
\begin{equation*}
    \| \bar M^{-1} \|=\lambda_{max}(\bar M^{-1})=\frac{1}{\lambda_{min}(\bar M^{-1})} \leq \frac{2}{\lambda_{min}(M)}.
\end{equation*}
Now we have
\begin{align*}
    &\max_{0\leq \xi \leq 1} \frac{2}{\mu_{\ell}+\gamma_{\ell}} \|(\bar M^{-1}- M^{-1}) \nabla^2 f(x(\xi))\| \leq \max_{0\leq \xi \leq 1} \frac{2}{\mu_{\ell}+\gamma_{\ell}} \| \bar M^{-1} (M-\bar M) M^{-1} \nabla^2 f(x(\xi))\| \\ & \leq \max_{0\leq \xi \leq 1} \frac{2}{\mu_{\ell}+\gamma_{\ell}} \| \bar M^{-1} (M-\bar M)\| \|M^{-1} \nabla^2 f(x(\xi))\| \leq \frac{2}{\mu_{\ell}+\gamma_{\ell}} \| \bar M^{-1} \| \| M-\bar M \| \gamma_{\ell} \\ & \leq \frac{4}{\lambda_{min}(M)} \frac{\lambda_{min}(M)}{8\kappa_{\ell}} =\frac{1}{2 \kappa_{\ell}}.
\end{align*}
Thus, it holds
\begin{equation*}
    \|x^{(t+1)}-x^*\| \leq \left(1-\frac{1}{2 \kappa_{\ell}} \right) \|x^{(t)}-x^*\|
\end{equation*}
which implies
\begin{equation*}
    \|x^{(t+1)}-x^*\| \leq \left(1-\frac{1}{2 \kappa_{\ell}} \right)^t \|x^{(0)}-x^*\| \leq \left(1-\frac{1}{2 \kappa_{\ell}} \right)^t D.
\end{equation*}
\end{proof}
We recall the parameters
\begin{align*}
   & \xi=1-\frac{1}{2 \kappa_{\ell}}, \\ 
   & K=\frac{2}{\xi}, \\
   & \delta=\frac{\xi(1-\xi)}{4}, \\
   & R^{(t)}= \frac{\gamma_{\ell}}{2} K \left(1-\frac{1}{4 \kappa_{\ell}} \right)^t D.
\end{align*}
\begin{lemma}
The iterates of Algorithm \ref{algo:GLM_algorithm} satisfy the following inequalities:
\begin{align*}
    &\| x^{(t)}-x^* \| \leq \left(1-\frac{1}{4\kappa_{\ell}}\right)^t D, \\ & \| \bar M^{-1} \nabla f_i(x^{(t)})- v^{(t)}_i \| \leq \frac{\delta R^{(t)}}{2},
    \\& \|\bar M^{-1} \nabla f(x^{(t)})-v^{(t)} \| \leq \delta R^{(t)}.
\end{align*}
\end{lemma}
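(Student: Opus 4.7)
I would prove all three inequalities \emph{simultaneously by induction on $t$}. The three estimates are coupled: the first (contraction of iterates) is a noisy version of Lemma \ref{le:inexact_precond} whose noise is exactly what the third estimate controls; the second and third (quantization errors) are applications of Proposition \ref{lattice_quantization}, which only requires that the guard hypothesis $\|x-x'\|\le y$ be met at each encode/decode, so the work is to verify those guards using the previously established bounds.

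\textbf{Base case $t=0$.} The first inequality is by the choice of $x^{(0)}$. For the second, I would bound
$\|\bar M^{-1}\nabla f_i(x^{(0)}) - \bar M^{-1}\nabla f_{i_0}(x^{(0)})\|
\le \|\bar M^{-1}\|\,(\|\nabla f_i(x^{(0)})\| + \|\nabla f_{i_0}(x^{(0)})\|)$
using $\|\bar M^{-1}\|\le 2/\lambda_{\min}(M)$ (shown in the proof of Lemma \ref{le:inexact_precond}) and the smoothness of each $f_i$, which is at most $\gamma_\ell\lambda_{\max}(M_i)\le n\gamma_\ell\lambda_{\max}(M)$, combined with $\|x^{(0)}-x_i^*\|\le D$. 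A direct check then shows this is at most $4n\kappa(M)R^{(0)}$, so Proposition \ref{lattice_quantization} gives $\|v_i^{(0)}-\bar M^{-1}\nabla f_i(x^{(0)})\|\le \delta R^{(0)}/2$. Averaging, $r^{(0)}$ lies within $\delta R^{(0)}/2$ of $\bar M^{-1}\nabla f(x^{(0)})$; the guard of the second quantization call is verified by triangle inequality, and one more application of Proposition \ref{lattice_quantization} yields the third inequality.

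\textbf{Inductive step.} Assume the three bounds hold at step $t$. Writing
$x^{(t+1)}-x^* = \bigl(\mathrm{Id}-\eta\bar M^{-1}\textstyle\int_0^1\nabla^2 f(x(\xi))\,d\xi\bigr)(x^{(t)}-x^*) + \eta\bigl(\bar M^{-1}\nabla f(x^{(t)}) - v^{(t)}\bigr)$
and reusing the spectral estimates from the proof of Lemma \ref{le:inexact_precond} on the operator factor, I get $\|x^{(t+1)}-x^*\|\le \xi\|x^{(t)}-x^*\|+\eta\delta R^{(t)}$. With $R^{(t)}=(\gamma_\ell/2)K(1-1/(4\kappa_\ell))^tD$, $\eta=2/(\mu_\ell+\gamma_\ell)$, $K=2/\xi$ and $\delta=\xi(1-\xi)/4$, a short geometric-series calculation shows this is bounded by $(1-1/(4\kappa_\ell))^{t+1}D$, giving the first inequality. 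For the second, split
$\|\bar M^{-1}\nabla f_i(x^{(t+1)})-v_i^{(t)}\|
\le \|\bar M^{-1}\|\,\|\nabla f_i(x^{(t+1)})-\nabla f_i(x^{(t)})\| + \|\bar M^{-1}\nabla f_i(x^{(t)})-v_i^{(t)}\|$,
bound the gradient difference by $n\gamma_\ell\lambda_{\max}(M)\|x^{(t+1)}-x^{(t)}\|$, control $\|x^{(t+1)}-x^{(t)}\|$ by $\|x^{(t+1)}-x^*\|+\|x^{(t)}-x^*\|$ using the first inequality already established at $t+1$, and use the inductive second bound for the remaining term. Since $R^{(t)}/R^{(t+1)}$ is a fixed constant close to $1$, this falls below $4n\kappa(M)R^{(t+1)}$; Proposition \ref{lattice_quantization} then gives the second inequality at $t+1$. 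Averaging and one more quantization call (whose guard is again by triangle inequality) yield the third.

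\textbf{Main obstacle.} The delicate work is the \emph{simultaneous tuning of constants}: the additive noise $\eta\delta R^{(t)}$ must be small enough that the contraction rate $\xi = 1-1/(2\kappa_\ell)$ from Lemma \ref{le:inexact_precond} degrades only to $1-1/(4\kappa_\ell)$; at the same time, the guard radii $4n\kappa(M)R^{(t+1)}$ and $(\delta/2+4n\kappa(M))R^{(t+1)}$ must cover both the propagated previous quantization error and the drift between $x^{(t)}$ and $x^{(t+1)}$. The specific choices $K=2/\xi$ and $\delta=\xi(1-\xi)/4$ are engineered so that both conditions close simultaneously; verifying this closure cleanly is the main bookkeeping burden.
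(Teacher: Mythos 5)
Your proposal is correct and follows essentially the same route as the paper's proof: a simultaneous induction on $t$, with the contraction step obtained by adding the quantization error $\eta\,\delta R^{(t)}$ to the one-step rate $\xi$ from Lemma \ref{le:inexact_precond} (the identity $\delta K+\xi=1-\tfrac{1}{4\kappa_\ell}$ closes the recursion), and the guard radii for each quantization call verified exactly as you describe. The only cosmetic difference is that the paper carries out the constant-chasing for the guard $4n\kappa(M)R^{(t+1)}$ explicitly rather than appealing to $R^{(t)}/R^{(t+1)}$ being a bounded constant, but the substance is identical.
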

\begin{proof}
We firstly prove the inequalities for $t=0$. The first one is direct by the definition of $D$. For the second one, we notice that
\begin{align*}
    &\| \bar M^{-1} \nabla f_i(x^{(0)})-\bar M^{-1} \nabla f_{i_0}(x^{(0)}) \| \leq \frac{2}{\lambda_{min}(M)} (\|\nabla f_i(x^{(0)})\|+\|\nabla f_{i_0}(x^{(0)})\|) \leq \\ & \frac{2}{\lambda_{min}(M)} (\gamma_i \|x^{(0)}-x_i^*\|+\gamma_{i_0} \|x^{(0)}-x_{i_0}^*\|) \leq 2 \gamma_{\ell} \frac{\lambda_{max}(M_i)}{\lambda_{min}(M)} D + 2 \gamma_{\ell} \frac{\lambda_{max}(M_{i_0})}{\lambda_{min}(M)} D \leq 4 n \kappa(M) R^{(0)}.
\end{align*}
The last inequality follows because $K \geq 2$ and $\lambda_{max}(M_i) \leq n \lambda_{max}(M)$.
\newline
(We recall also that $\| \bar M^{-1} \| \leq \frac{2}{\lambda_{min}(M)}$, because $\lambda_{min}(\bar M) \geq \lambda_{min}(M)-\|M-\bar M\| \geq \lambda_{min}(M)/2$.)
\newline
By the definition of $v_i^{(0)}$, we have
\begin{equation*}
    \|v_i^{(0)}-\bar M^{-1} \nabla f_i(x^{(0)})\| \leq \frac{\delta R^{(0)}}{2}.
\end{equation*}
Towards the third inequality at $t=0$, we have
\begin{equation*}
    \| r^{(0)}-\bar M^{-1} \nabla f(x^{(0)}) \| \leq \frac{1}{n} \sum_{i=1}^n \|v_i^{(0)}-\bar M^{-1} \nabla f_i(x^{(0)})\| \leq \frac{\delta R^{(0)}}{2}.
\end{equation*}
Also, it holds
\begin{equation*}
    \|r^{(0)}-\bar M^{-1} \nabla f_i(x^{(0)})\| \leq \|r^{(0)}-\bar M^{-1} \nabla f(x^{(0)})\|+\|\bar M^{-1} \nabla f(x^{(0)})-\bar M^{-1} \nabla f_i(x^{(0)})\| \leq  \left(\frac{\delta}{2} + 4 n \kappa(M) \right) R^{(0)} ,
\end{equation*}
thus, by the definition of $v^{(0)}$,
\begin{equation*}
    \|v^{(0)}-r^{(0)}\| \leq \frac{\delta R^{(0)}}{2}
\end{equation*}
and putting everything together, we have
\begin{equation*}
    \|v^{(0)}-\bar M^{-1} \nabla f(x^{(0)})\| \leq \|v^{(0)}-r^{(0)}\|+\|r^{(0)}-\bar M^{-1} \nabla f(x^{(0)})\| \leq \frac{\delta R^{(0)}}{2}+\frac{\delta R^{(0)}}{2}=\delta R^{(0)}.
\end{equation*}

Now we assume that the inequalities hold for $t$ and prove that they continue to hold for $t+1$. We start with the first one:
\begin{align*}
    \|x^{(t+1)}-x^*\|&=\|x^{(t)}-\eta v^{(t)}+\eta \bar M^{-1} \nabla f(x^{(t)})-\eta \bar M^{-1} \nabla f(x^{(t)})-x^*\| \\& \leq \eta \|\bar M^{-1} \nabla f(x^{(t)})-v^{(t)}\|+\|x^{(t)}-\eta \bar M^{-1} \nabla f(x^{(t)})-x^*\| \\ & \leq \frac{2}{\gamma_{\ell}} \delta R^{(t)}+\xi \left(1-\frac{1}{4 \kappa_{\ell}} \right)^t D \\ &= \frac{2}{\gamma_{\ell}} \delta \frac{\gamma_{\ell}}{2} K\left(1-\frac{1}{4 \kappa_{\ell}} \right)^t D+\xi \left(1-\frac{1}{4 \kappa_{\ell}} \right)^t D \\ &=\delta K \left(1-\frac{1}{4 \kappa_{\ell}} \right)^t D + \xi \left(1-\frac{1}{4 \kappa_{\ell}} \right)^t D \\ &= (\delta K + \xi) \left(1-\frac{1}{4 \kappa_{\ell}} \right)^t D= \left(1-\frac{1}{4 \kappa_{\ell}} \right)^{t+1} D.
\end{align*}
For the second inequality it suffices to show that
\begin{equation*}
    \| \bar M^{-1} \nabla f_i(x^{(t+1)})- v^{(t)}_i \| \leq 4 n \kappa(M) R^{(t+1)}.
\end{equation*}
To that end, we write
\begin{align*}
    \| \bar M^{-1} \nabla f_i(x^{(t+1)})- v^{(t)}_i \| & = \| \bar M^{-1} \nabla f_i(x^{(t+1)})- \bar M^{-1}\nabla f_i(x^{(t)})+ \bar M^{-1}\nabla f_i(x^{(t)})- v^{(t)}_i \| \\ & \leq \| \bar M^{-1} \nabla f_i(x^{(t+1)})- \bar M^{-1}\nabla f_i(x^{(t)}) \|+ \|\bar M^{-1}\nabla f_i(x^{(t)})-v^{(t)}_i \| \\ & \leq \gamma_i \| \bar M^{-1} \| \| x^{(t+1)}-x^{(t)} \|+ \delta R^{(t)} \\ & \leq
    \gamma_{\ell} \lambda_{max}(M_i) \frac{2}{\lambda_{min}(M)} (\|x^{(t+1)}-x^*\|+\|x^{(t)}-x^*\|)+ \delta R^{(t)} \\ & \leq 4 n \gamma_{\ell} \kappa(M) \left(1-\frac{1}{4 \kappa_{\ell}} \right)^t D + \delta \frac{\gamma_{\ell}}{2} K \left(1-\frac{1}{4 \kappa_{\ell}} \right)^t D \\ & \leq   2 n (2/K+\delta/4)K \gamma_{\ell} \kappa(M) \left(1-\frac{1}{4 \kappa_{\ell}} \right)^t D \\ & \leq 2 n (2/K+\delta K)K \gamma_{\ell} \kappa(M) \left(1-\frac{1}{4 \kappa_{\ell}} \right)^t D   \\ & \leq 4 n \kappa(M) R^{(t+1)}.
\end{align*}
Previously we have used again that $\lambda_{max}(M_i) \leq n \lambda_{max}(M)$, because $n M=\sum_{i=1}^n M_i$ and all matrices $M_i$ are positive semi-definite.
\newline
For the last inequality we have
\begin{align*}
\| \bar M^{-1} \nabla f(x^{(t+1)})-r^{(t+1)} \| \leq \frac{1}{n} \sum_{i=1}^n \| \bar M^{-1} \nabla f_i(x^{(t+1)})-v^{(t+1)}_i \| \leq \frac{\delta R^{(t+1)}}{2}   
\end{align*}
and
\begin{align*}
    \| r^{(t+1)}-v^{(t)} \| &= \| r^{(t+1)}- \bar M^{-1} \nabla f(x^{(t+1)})+\bar M^{-1}\nabla f(x^{(t+1)})-\bar M^{-1}\nabla f(x^{(t)})+\bar M^{-1}\nabla f(x^{(t)})-v^{(t)} \| \\ & \leq \| r^{(t+1)}-\bar M^{-1}\nabla f(x^{(t+1)}) \|+ \| \bar M^{-1}\nabla f(x^{(t+1)})-\bar M^{-1}\nabla f(x^{(t)}) \| + \| \bar M^{-1} \nabla f(x^{(t)})-v^{(t)} \| \\ & \leq
    \frac{\delta R^{(t+1)}}{2}+\gamma \frac{2}{\lambda_{min}(M)} \| x^{(t+1)}-x^{(t)} \|+ \delta R^{(t)} \\ & \leq \frac{\delta R^{(t+1)}}{2}+4 \gamma_{\ell} \kappa(M) (\| x^{(t+1)}-x^*\|+\| x^{(t)}-x^* \|)+ \delta R^{(t)} \\ & \leq
    \frac{\delta R^{(t+1)}}{2}+4 \kappa(M) R^{(t+1)} \leq \left(4 n \kappa(M) + \frac{\delta}{2} \right) R^{(t+1)}.  
\end{align*}
The last part of the inequality follows from the same argument used in deriving the second one.
\newline
The last inequality implies that
\begin{equation*}
    \| v^{(t+1)}-r^{(t+1)} \| \leq \frac{\delta R^{(t+1)}}{2}.
\end{equation*}
Thus, putting everything together, we have
\begin{equation*}
    \| \bar M^{-1} \nabla f(x^{(t+1)})-v^{(t+1)} \| \leq \|\bar M^{-1} \nabla f(x^{(t+1)})-r^{(t+1)} \|+\| r^{(t+1)}-v^{(t+1)}\| \leq \frac{\delta R^{(t+1)}}{2}+\frac{\delta R^{(t+1)}}{2} = \delta R^{(t+1)}.
\end{equation*}
\end{proof}

\convergenceGLM*

\begin{proof}
The inequality for the convergence rate of the distance of the iterates from the minimizer holds from the previous lemma. We now turn our interest to the total communication cost.
We start from the quantization of the matrix $M$:
\newline
The communication cost for encoding each $M_i$ and decoding in the master node is
\begin{equation*}
 \bigO \left( \frac{d (d+1)}{2} \textnormal{log}_2 \left(\frac{2 \sqrt{d} n \lambda_{max}(M)}{\frac{\lambda_{min}(M)}{16 \sqrt{2} \kappa_{\ell}}} \right) \right)=\bigO(d^2 \textnormal{log}_2 (\sqrt{d} n \kappa_{\ell} \kappa(M))).
\end{equation*}
The communication cost of encoding $S$ in the master node and then decode back in every machine is
\begin{equation*}
 \bigO \left( \frac{d (d+1)}{2} \textnormal{log}_2 \left(\frac{3 \sqrt{d} n \lambda_{max}(M)}{\frac{\lambda_{min}(M)}{16 \sqrt{2}  \kappa_{\ell}}} \right) \right)=\bigO(d^2 \textnormal{log}_2 (\sqrt{d} n \kappa_{\ell} \kappa(M))).
\end{equation*}
Since we have $n$-many communications of each kind, the total communication cost is
\begin{equation*}
    b_m=\bigO(n d^2 \textnormal{log}_2 (\sqrt{d}n \kappa_{\ell} \kappa(M))=\bigO(n d^2 \textnormal{log} (\sqrt{d}n \kappa_{\ell} \kappa(M)). 
\end{equation*}
The communication cost of quantizing the descent direction $v^{(t)}$ at step $t \geq 0$ is at most 
\begin{equation*}
    \bigO \left(n d \log_2 \frac{4 n \kappa(M)}{\delta/2} \right)= \bigO \left(n d \log \frac{n \kappa(M)}{\delta} \right)
\end{equation*}
for encoding the local descent directions and 
\begin{equation*}
    \bigO \left(n d \log_2 \frac{4 n \kappa(M)+\frac{\delta}{2}}{\delta/2} \right) \leq \bigO \left(n d \log \frac{9 n \kappa(M)}{\delta} \right)= \bigO \left(n d \log \frac{ n \kappa(M)}{\delta} \right) 
\end{equation*}
for decoding back. 
Since we have
\begin{equation*}
    \frac{1}{\delta} = \frac{4}{\xi(1-\xi)} = \frac{4}{\frac{1}{2 \kappa_{\ell}} \left(1-{\frac{1}{2 \kappa_{\ell}}} \right)} \leq 16 \kappa_{\ell}, 
\end{equation*}
we can bound the total communication cost by
\begin{equation*}
    b=\bigO \left(n d \log (n \kappa_{\ell} \kappa(M) \right).
\end{equation*}
We have $f(x^{(t)})-f(x^*) \leq \epsilon$ if $\|x^{(t)}-x^*\| \leq \sqrt{\frac{2 \epsilon}{\gamma}}$, thus we reach accuracy $\epsilon$ in terms of function values in at most $t=2 \kappa_{\ell} \log \frac{\gamma D^2}{2 \epsilon}$ and putting everything together we find the total communication cost for quantizing the descent directions along the whole optimization process to be
\begin{equation*}
    \bigO \left(n d \kappa_{\ell} \log (n \kappa_{\ell} \kappa(M)) \log \frac{\gamma D^2}{2 \epsilon}\right).
\end{equation*}
Thus, the total communication cost in number of bits is obtained by summing the cost for matrix and descent direction quantization:
\begin{equation*}
    b=   \bigO \left(n d^2 \log \left(\sqrt{d} n \kappa_{\ell} \kappa(M) \right) \right) +  \bigO \left(n d \kappa_{\ell} \log (n \kappa_{\ell} \kappa(M)) \log \frac{\gamma D^2}{\epsilon}\right).
\end{equation*}
\end{proof}

\section{Proofs for Quantized Newton's Method}
\label{app:quant_newton}
We firstly recall Quantized Newton's method in a compact form as we did also for GLMs:

\begin{algorithm}[H]
\caption{Quantized Newton's Method}
\label{algo:Newton}
\begin{algorithmic}[1]
\STATE $x^{(0)} \in \mathbb{R}^d, \max_i \lbrace \|x^{(0)}-x^*\|,\|x^{(0)}-x_i^*\| \rbrace \leq \frac{\alpha \mu}{2 \sigma}$
\STATE $H_0^i= \phi^{-1}\left(Q \left(\phi(\nabla^2 f_i(x^{(0)})), \phi(\nabla^2 f_{i_0}(x^{(0)})), 2 \sqrt{d} \gamma, \frac{G^{(0)}}{2 \sqrt{2}\kappa} \right)\right)$
\STATE $S_0=\frac{1}{n} \sum_{i=1}^n H_0^i$
\STATE $H_0= \phi^{-1}\left(Q \left(\phi(S_0), \phi(\nabla^2 f_i(x^{(0)})),  \sqrt{d} \left( \frac{G^{(0)}}{2 \kappa} + 2 \gamma \right) , \frac{G^{(0)}}{2 \sqrt{2} \kappa} \right)\right)$
\STATE $v_i^{(0)}=Q \left(H_0^{-1} \nabla f_i(x^{(0)}), H_0^{-1} \nabla f_{i_0} (x^{(0)}),4 \kappa P^{(0)},\frac{\theta P^{(0)}}{2} \right)$
\STATE $p^{(0)}=\frac{1}{n} \sum_{i=1}^n v_i^{(0)}$
\STATE $v^{(0)}=Q \left(P^{(0)}, H_0^{-1} \nabla f_i (x^{(0)}),\left(\frac{\theta}{2}+4 \kappa \right) P^{(0)}, \frac{\theta P^{(0)}}{2} \right)$
\FOR{$t \geq 0$} 
\STATE $x^{(t+1)}=x^{(t)}-v^{(t)}$
\STATE $H_{t+1}^i=\phi^{-1}\left(Q \left(\phi(\nabla^2 f_i(x^{(t+1)})), \phi(H_t^i), \frac{10 \sqrt{d}}{1+\alpha} G^{(t+1)}, \frac{G^{(t+1)}}{2 \sqrt{2} \kappa} \right)\right)$
\STATE $S_{t+1}=\frac{1}{n} \sum_{i=1}^{n} H_{t+1}^i$
\STATE $H_{t+1}=\phi^{-1} \left(Q \left(\phi(S_{t+1}), \phi(H_t) , \sqrt{d} \left( \frac{1}{2 \kappa}+ \frac{10}{1+\alpha} \right) G^{(t+1)} , \frac{G^{(t+1)}}{2 \sqrt{2} \kappa} \right) \right)$
\STATE $v^{(t+1)}_i=Q \left(H_{t+1}^{-1} \nabla f_i(x^{(t+1)}), v^{(t)}_i, 11 \kappa P^{(t+1)} , \frac{\theta P^{(t+1)}}{2} \right)$
\STATE $p^{(t+1)}=\frac{1}{n} \sum_{i=1}^n v_i^{(t+1)}$
\STATE $v^{(t+1)}=Q \left(r^{(t+1)}, v^{(t)},\left(\frac{\theta}{2}+11 \kappa \right) P^{(t+1)}, \frac{\theta P^{(t+1)}}{2} \right)$
\ENDFOR
\end{algorithmic}

\end{algorithm}

We recall the parameters
\begin{align*}
    &G^{(t)}=\frac{\mu}{4} \alpha \left(\frac{1+\alpha}{2} \right)^t, \\
    &\alpha \geq 2 \frac{\sigma}{\mu} \max_i \lbrace \|x^{(0)}-x^*\|,\|x^{(0)}-x_i^*\| \rbrace \\
    &\theta=\frac{\alpha(1-\alpha)}{4}, \\
    & K=\frac{2}{\alpha}, \\
    & P^{(t)}= \frac{\mu}{2 \sigma} K \alpha \left( \frac{1+\alpha}{2}\right)^t.
\end{align*}

\begin{restatable}{lemma}{newtondescquant}
\label{le:desc_direction_newton}
    The iterates $x^{(t)}$ of the quantized Newton's algorithm satisfy the inequalities
\begin{align*}
    & \|x^{(t)}-x^*\| \leq \frac{\mu}{2 \sigma} \alpha \left(\frac{1+\alpha}{2} \right)^t, \\
    & \|H_t^i-\nabla^2 f_i(x^{(t)})\| \leq \frac{G^{(t)}}{2 \kappa}, \\
    & \|H_t- \nabla^2 f(x^{(t)})\| \leq \frac{G^{(t)}}{\kappa}, \\
    & \|H_t^{-1} \nabla f_i(x^{(t)})-v^{(t)}_i\| \leq \frac{\theta P^{(t)}}{2}, \\
    & \|H_t^{-1} \nabla f(x^{(t)})-v^{(t)}\| \leq \theta P^{(t)}.
\end{align*}
\end{restatable}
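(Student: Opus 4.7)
The plan is to prove all five inequalities simultaneously by induction on $t$. The five statements are coupled: the Hessian bounds (2) and (3) are needed to control $\|H_t^{-1}\|$ and $\|H_t^{-1}-\nabla^2 f(x^{(t)})^{-1}\|$ for the descent-direction bounds (4) and (5), and all of these feed into the convergence bound (1) at the next step; conversely, the convergence bound at step $t+1$ is used together with the $\sigma$-Lipschitz Hessian to verify that the quantization input at step $t+1$ lies within the declared radius. I will verify the base case $t=0$ directly from the initialization $\|x^{(0)}-x^\star\| \leq \alpha\mu/(2\sigma)$, Proposition~\ref{lattice_quantization}, and Lemma~\ref{prop:basic}: the matrix-norm error obtained after quantizing $\phi(\nabla^2 f_i(x^{(0)}))$ with vector radius $G^{(0)}/(2\sqrt{2}\kappa)$ is at most $\sqrt{2}$ times the vector error, giving exactly $G^{(0)}/(2\kappa)$; averaging then yields $\|S_0-\nabla^2 f(x^{(0)})\|\leq G^{(0)}/(2\kappa)$, and a second quantization round combined with the triangle inequality yields (3). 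For (4) I will use perturbation to get $\|H_0^{-1}\|\leq 2/\mu$ (since $G^{(0)}/\kappa\leq\mu/8$), bound $\|H_0^{-1}\nabla f_i(x^{(0)})-H_0^{-1}\nabla f_{i_0}(x^{(0)})\|$ by the Lipschitz gradient and the initialization, and check that this falls within $4\kappa P^{(0)}$.

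For the inductive step, the key step is inequality (1), which I will attack by the decomposition
\begin{equation*}
x^{(t+1)}-x^\star = \bigl(x^{(t)}-x^\star - \nabla^2 f(x^{(t)})^{-1}\nabla f(x^{(t)})\bigr) + \bigl(\nabla^2 f(x^{(t)})^{-1} - H_t^{-1}\bigr)\nabla f(x^{(t)}) + \bigl(H_t^{-1}\nabla f(x^{(t)}) - v^{(t)}\bigr).
\end{equation*}
The first summand is the standard exact-Newton term, bounded by $\tfrac{\sigma}{2\mu}\|x^{(t)}-x^\star\|^2$. The second is bounded via $\|H_t^{-1}-\nabla^2 f(x^{(t)})^{-1}\|\leq \tfrac{2G^{(t)}}{\mu\gamma\kappa}\cdot\kappa = \tfrac{2G^{(t)}}{\mu\gamma}$ together with $\|\nabla f(x^{(t)})\|\leq\gamma\|x^{(t)}-x^\star\|$. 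The third is exactly inequality (5) at step $t$. Substituting the definitions of $G^{(t)}$, $P^{(t)}$, $\theta$, $K$ and the inductive bound on $\|x^{(t)}-x^\star\|$, the three contributions sum to at most $\tfrac{\mu\alpha}{2\sigma}\bigl(\tfrac{1+\alpha}{2}\bigr)^{t+1}$. This is the arithmetic core of the argument; it is where the particular choices $\theta=\alpha(1-\alpha)/4$ and $G^{(t)}=\tfrac{\mu\alpha}{4}\bigl(\tfrac{1+\alpha}{2}\bigr)^{t}$ become essential, and I expect this to be the main obstacle in terms of keeping the constants tight.

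Having (1) at step $t+1$, I will derive (2) at $t+1$ by noting
\begin{equation*}
\|\nabla^2 f_i(x^{(t+1)})-H_t^i\| \leq \sigma\bigl(\|x^{(t+1)}-x^\star\|+\|x^{(t)}-x^\star\|\bigr) + G^{(t)}/(2\kappa),
\end{equation*}
and checking via Lemma~\ref{prop:basic} that the vector norm $\|\phi(\nabla^2 f_i(x^{(t+1)}))-\phi(H_t^i)\|$ falls within the declared quantization radius $\tfrac{10\sqrt{d}}{1+\alpha}G^{(t+1)}$; the quantization property then produces an error at most $G^{(t+1)}/(2\kappa)$ in matrix norm, as desired. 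Inequality (3) follows by averaging the $H_{t+1}^i$ and quantizing once more, exactly mirroring the base case but with the new radii. Inequalities (4) and (5) at $t+1$ are handled by inserting $H_t^{-1}\nabla f_i(x^{(t)})$ (respectively $v_i^{(t)}$ and $v^{(t)}$) as a pivot, using the freshly proved Hessian bound to control $\|H_{t+1}^{-1}-H_t^{-1}\|$, the Lipschitz gradient to control changes in $\nabla f_i$, and the inductive hypotheses (4)–(5) at step $t$ for the residual; verifying that each difference lies within the quantization input radius $11\kappa P^{(t+1)}$ (respectively $(\theta/2+11\kappa)P^{(t+1)}$) is again a calculation that the parameters are designed to satisfy.
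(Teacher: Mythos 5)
Your proposal is correct and follows essentially the same route as the paper's proof: a simultaneous induction over all five inequalities in the same order (convergence at $t+1$ first, then the Hessian bounds, then the descent-direction bounds), with the same use of Lemma~\ref{prop:basic} to transfer between vector and matrix norms and the same verification that each quantization input lies within its declared radius. The only cosmetic difference is in inequality (1): the paper bounds $\|x^{(t)}-H_t^{-1}\nabla f(x^{(t)})-x^*\|\le\alpha\|x^{(t)}-x^*\|$ via the integral representation of the gradient and an operator-norm bound on $Id-H_t^{-1}\nabla^2 f(x(\xi))$, whereas you split the same quantity into the exact Newton residual plus the inverse-Hessian perturbation term at the vector level; both yield the identical arithmetic $(\theta K+\alpha)=\frac{1+\alpha}{2}$.
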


\begin{proof}
We firstly prove that the inequalities hold at $t=0$. The first one is trivial by the choice of $x^{(0)}$.

For the second one, it suffices to show that
\begin{equation*}
    \| \phi(H_0^i)-\phi(\nabla^2 f_i(x^{(0)})) \| \leq \frac{G^{(0)}}{2\sqrt{2} \kappa}
\end{equation*}
by Lemma \ref{le:norm_distortion},
and for that suffices
\begin{equation*}
    \| \phi(\nabla^2 f_i(x^{(0)}))-\phi(\nabla^2 f_{i_0}(x^{(0)})) \| \leq 2 \sqrt{d} \gamma,
\end{equation*}
which is indeed the case because
\begin{align*}
    &\| \phi(\nabla^2 f_i(x^{(0)}))-\phi(\nabla^2 f_{i_0}(x^{(0)})) \| \leq \sqrt{d} \|\nabla^2 f_i(x^{(0)})-\nabla^2 f_{i_0}(x^{(0)})\| \leq \sqrt{d} (\|\nabla^2 f_i(x^{(0)})\|+\|\nabla^2 f_{i_0}(x^{(0)})\|)  \leq 2 \sqrt{d} \gamma,
\end{align*}
again using Lemma \ref{le:norm_distortion}.

For the third inequality at $t=0$, we have
\begin{equation*}
    \| \nabla^2 f(x^{(0)}) - S_0 \| \leq \frac{1}{n} \sum_{i=1}^n \|\nabla^2 f_i(x^{(0)}) - H_0^i \| \leq \frac{G^{(0)}}{2 \kappa}.
\end{equation*}
We need also $\| S_0 - H_0 \| \leq \frac{G^{(0)}}{2 \kappa}$ and for that it suffices $\| \phi(S_0) - \phi(H_0) \| \leq \frac{G^{(0)}}{2 \sqrt{2} \kappa}$, which follows from 
\begin{equation*}
    \| \phi(S_0) - \phi(\nabla^2 f_i(x^{(0)})) \| \leq \sqrt{d} \left(\frac{G^{(0)}}{2 \kappa} + 2 \gamma \right).
\end{equation*}
In order to show the latter, we write
\begin{align*}
    &\| \phi(S_0) - \phi(\nabla^2 f_i(x^{(0)})) \| \leq \sqrt{d} \| S_0-\nabla^2 f_i(x^{(0)}) \| \leq \sqrt{d} (\| S_0-\nabla^2 f(x^{(0)}) \|+\| \nabla^2 f(x^{(0)})-\nabla^2 f_i(x^{(0)}) \|) \\ &  \leq \sqrt{d} \left(\frac{G^{(0)}}{2 \kappa} + 2 \gamma \right).
\end{align*}

For the fourth one it suffices to show that
\begin{equation*}
    \|H_0^{-1} \nabla f_i(x^{(0)})-H_0^{-1} \nabla f_{i_0}(x^{(0)})\| \leq 4 \kappa P^{(0)}.
\end{equation*}
Indeed,
\begin{align*}
    &\|H_0^{-1} \nabla f_i(x^{(0)})-H_0^{-1} \nabla f_{i_0}(x^{(0)})\| \leq \|H_0^{-1}\| (\|\nabla f_i(x^{(0)})\|+\|\nabla f_{i_0}(x^{(0)})\|) \leq \frac{2}{\mu} (\gamma \|x^{(0)}-x_i^*\|+ \gamma \|x^{(0)}-x_{i_0}^*\|) \\ & \leq 4 \frac{\gamma}{\mu} K \frac{\mu}{2 \sigma} \alpha= 4 \kappa P^{(0)}.
\end{align*}
In the previous inequality we used that $\|H_0^{-1}\| \leq \frac{2}{\mu}$ and this can be seen as follows:
\begin{align*}
   \|H_0^{-1}\|=\frac{1}{\lambda_{min}(H_0)} \leq \frac{1}{\lambda_{min}(\nabla^2 f(x^{(0)}))-\frac{G^{(0)}}{\kappa}} \leq \frac{1}{\mu-\frac{\mu}{4} \alpha} \leq \frac{2}{\mu}. 
\end{align*}

For the fifth inequality at $t=0$, we have
\begin{equation*}
    \|H_0^{-1} \nabla f(x^{(0)})-p^{(0)}\| \leq \frac{1}{n} \sum_{i=1}^n \|H_0^{-1} \nabla f_i(x^{(0)})-v_i^{(0)}\| \leq \frac{\theta P^{(0)}}{2}.
\end{equation*}
We need also
\begin{equation*}
    \| v^{(0)}-p^{(0)} \| \leq \frac{\theta P^{(0)}}{2}.
\end{equation*}
For that it suffices to show that
\begin{equation*}
  \|p^{(0)}-H_0^{-1} \nabla f_i(x^{(0)})\| \leq \left(\frac{\theta}{2}+4 \kappa \right) P^{(0)}.
\end{equation*}
Indeed
\begin{align*}
    &\|p^{(0)}-H_0^{-1} \nabla f_i(x^{(0)})\| \leq \|p^{(0)}-H_0^{-1} \nabla f(x^{(0)})\|+\|H_0^{-1} \nabla f(x^{(0)})-H_0^{-1} \nabla f_i(x^{(0)})\| \\ & \leq \frac{\theta P^{(0)}}{2}+\frac{2}{\mu} (\gamma \|x^{(0)}-x^*\|+ \gamma \|x^{(0)}-x_i^*\|) \leq \frac{\theta P^{(0)}}{2}+ 4 \kappa P^{(0)}=\left(\frac{\theta}{2}+4 \kappa \right) P^{(0)}.
\end{align*}

Now we assume that the inequalities hold for $t$ and wish to prove that they also hold for $t+1$. We start with an auxiliary result regarding taking a Newton iterate using the quantized version of the Hessian but the exact gradient: 
\begin{equation*}
    \|x^{(t)}-H_t^{-1} \nabla f(x^{(t)})-x^*\| \leq \alpha \|x^{(t)}-x^*\|.
\end{equation*}
For proving that we start by writing
\begin{align*}
    &x^{(t)}-H_t^{-1} \nabla f(x^{(t)})-x^*=(x^{(t)}-x^*)-H_t^{-1} \left(\int_0^1 \nabla^2 f(x(\xi)) d\xi \right)(x^{(t)}-x^*) = \\ &\left(Id- \int_0^1 H_t^{-1} \nabla^2 f(x(\xi)) d\xi \right) (x^{(t)}-x^*),
\end{align*}
where
\begin{equation*}
    x(\xi)=x^{(t)}+\xi(x^*-x^{(t)}).
\end{equation*}

Thus
\begin{equation*}
    \|x^{(t)}-H_t^{-1} \nabla f(x^{(t)})-x^*\| \leq \left \|Id-\int_0^1  H_t^{-1} \nabla^2 f(x(\xi)) d\xi \right\| \|x^{(t)}-x^*\|\leq \max_{0 \leq \xi \leq 1} \|Id-H_t^{-1} \nabla^2 f(x(\xi))\| \|x^{(t)}-x^*\|.
\end{equation*}

Now we need to deal with the quantity $\max_{0 \leq \xi \leq 1} \|Id-H_t^{-1} \nabla^2 f(x(\xi))\|$. 

We first write bound $\| H_t^{-1} \|$:
\begin{equation*}
    \|H_t^{-1}\|=\frac{1}{\lambda_{min}(H_t)} \leq \frac{1}{\lambda_{min}(\nabla^2 f(x_t)-\|\nabla^2 f(x_t))-H_t\|} \leq \frac{1}{\mu-\frac{G^{(t)}}{\kappa}}.
\end{equation*}
\newline
Now, we have
\begin{equation*}
    \frac{G^{(t)}}{\kappa} \leq \frac{\mu}{2}
\end{equation*}
and the result follows. This happens if
\begin{equation*}
    \frac{1}{\kappa} \leq \frac{2}{\alpha}
\end{equation*}
which holds always true, because $\frac{1}{\kappa},\alpha<1$.
Thus
\begin{equation*}
    \|H_t^{-1}\| \leq \frac{2}{\mu}.
\end{equation*}

Second, we bound the quantity $\|\nabla^2 f(x^{(t)})^{-1}-H_t^{-1}\|$:
\begin{equation*}
    \|\nabla^2 f(x^{(t)})^{-1}-H_t^{-1}\|= \|\nabla^2 f(x^{(t)})^{-1}(\nabla^2 f(x^{(t)})-H_t)H_t^{-1}\| \leq \|\nabla^2 f(x^{(t)})-H_t\| \|\nabla^2 f(x^{(t)}) ^{-1}\| \|H_t^{-1}\| \leq \frac{G^{(t)}}{\kappa} \frac{1}{\mu} \frac{2}{\mu}=\frac{2}{\mu^2} \frac{G^{(t)}}{\kappa}.
\end{equation*}

Using that and the fact that $f$ is $\mu$-strongly convex, $\gamma$-smooth, with a $\sigma$-Lipschitz Hessian, we get

\begin{align*}
   &\max_{0 \leq \xi \leq 1} \|Id-H_t^{-1} \nabla^2 f(x(\xi))\| = \max_{0 \leq \xi \leq 1} \|Id-\nabla^2 f(x^{(t)}) ^{-1} \nabla^2 f(x(\xi))+(\nabla^2 f(x^{(t)}) ^{-1} -H_t^{-1}) \nabla^2 f(x(\xi)) \| \leq \\ & \max_{0 \leq \xi \leq 1} \|Id-\nabla^2 f(x^{(t)}) ^{-1} \nabla^2 f(x(\xi)) \|+\max_{0 \leq \xi \leq 1} \|(\nabla^2 f(x^{(t)}) ^{-1} -H_t^{-1}) \nabla^2 f(x(\xi)) \| \leq \\ &  \max_{0 \leq \xi \leq 1} \|\nabla^2 f(x^{(t)})^{-1} (\nabla^2 f(x^{(t)})- \nabla^2 f(x(\xi))) \|+\|\nabla^2 f(x^{(t)}) ^{-1} -H_t^{-1}\|\max_{0 \leq \xi \leq 1} \| \nabla^2 f(x(\xi)) \| \leq \\ &
   \frac{\sigma}{\mu} \| x^{(t)}-x^* \| + \frac{2}{\mu^2} \frac{G^{(t)}}{\kappa} \gamma.
\end{align*}

Thus, we finally get
\begin{align*}
    \|x^{(t)}-H_t^{-1} \nabla f(x^{(t)})-x^*\| & \leq \frac{\sigma}{\mu} \|x^{(t)}-x^*\|^2+ G^{(t)} \frac{2}{\mu} \|x^{(t)}-x^*\| \\& \leq \frac{\sigma}{\mu} \|x^{(t)}-x^*\| \frac{\mu}{2 \sigma} \alpha \left(\frac{1+\alpha}{2} \right)^t + \frac{\mu}{4} \alpha \left(\frac{1+\alpha}{2} \right)^t \frac{2}{\mu} \|x^{(t)}-x^*\| \\ & \leq \alpha \left(\frac{1+\alpha}{2} \right)^t \|x^{(t)}-x^*\| \leq \alpha \|x^{(t)}-x^*\|,
\end{align*}
which is the desired result.

Now we pass to the exact iterate of our algorithm. Using the induction hypothesis and the previous inequality, we have

\begin{align*}
    \|x^{(t+1)}-x^*\| &=\|x^{(t)}-v^{(t)}+H_t^{-1} \nabla f(x^{(t)})-H_t^{-1} \nabla f(x^{(t)})-x^*\| \\ & \leq
    \|H_t^{-1} \nabla f(x^{(t)})-v^{(t)}\|+\|x^{(t)}-H_t^{-1} \nabla f(x^{(t)})-x^*\| \\ & \leq \theta P^{(t)} + \alpha \|x^{(t)}-x^*\| \\ & = \theta \frac{\mu}{2 \sigma} K \alpha  \left(\frac{1+\alpha}{2} \right)^t + \alpha \frac{\mu}{2 \sigma} \alpha \left(\frac{1+\alpha}{2} \right)^t\\ &=(\theta K+\alpha) \frac{\mu}{2 \sigma} \alpha \left(\frac{1+\alpha}{2} \right)^t  \\ &= \frac{\mu}{2 \sigma} \alpha \left(\frac{1+\alpha}{2} \right)^{t+1}.
\end{align*}

which is what we need.

For the second inequality it suffices to prove that
\begin{equation*}
    \| \phi(H_{t+1}^i)-\phi(\nabla^2 f_i(x^{(t+1)})) \| \leq \frac{G^{(t+1)}}{2 \sqrt{2} \kappa}
\end{equation*}
and for that it suffices
\begin{equation*}
 \|\phi(\nabla^2 f_i(x^{(t+1)}))-\phi(H_t^i)\| \leq \frac{10 \sqrt{d}}{1+\alpha} G^{(t+1)}.   
\end{equation*}
We indeed have
\begin{align*}
    \|\phi(\nabla^2 f_i(x^{(t+1)}))-\phi(H_t^i)\| & \leq \|\phi(\nabla^2 f_i(x^{(t+1)}))-\phi(\nabla^2 f_i(x^{(t)}))+\phi(\nabla^2 f_i(x^{(t)}))-\phi(H_t^i) \| \\ & \leq \|\phi(\nabla^2 f_i(x^{(t+1)}))-\phi(\nabla^2 f_i(x^{(t)}))\|+\|\phi(\nabla^2 f_i(x^{(t)}))-\phi(H_t^i) \| \\ & \leq \sqrt{d} (\|\nabla^2 f_i(x^{(t+1)})-\nabla^2 f_i(x^{(t)})\|+\|\nabla^2 f_i(x^{(t)})-H_t^i\|)  \\ & \leq \sqrt{d} \left(\sigma \|x^{(t+1)}-x^{(t)}\|+\frac{ G^{(t)}}{\kappa} \right) \\ & \leq
    \sqrt{d}\left(2 \sigma \frac{\mu}{2 \sigma} \alpha \left(\frac{1+\alpha}{2} \right)^t
 +\frac{1}{\kappa} \frac{\mu}{4} \alpha \left(\frac{1+\alpha}{2} \right)^t \right)  \\ & \leq \sqrt{d} \frac{5 \mu}{4} \alpha \left(\frac{1+\alpha}{2} \right)^t = \sqrt{d} \frac{5 \mu}{4 \frac{1+\alpha}{2}} \alpha \left(\frac{1+\alpha}{2} \right)^{t+1} = \frac{10 \sqrt{d}}{1+\alpha} G^{(t+1)}.
\end{align*}
For the third inequality, we have
\begin{equation*}
    \|\nabla^2 f(x^{(t+1)})-S_{t+1}\| \leq \frac{1}{n} \sum_{i=1}^{n} \|\nabla^2 f_i(x^{(t+1)})-H_{t+1}^i\| \leq \frac{G^{(t+1)}}{2 \kappa}.
\end{equation*}
Now it suffices to prove
\begin{equation*}
    \|S_{t+1}-H_{t+1}\| \leq \frac{G^{(t+1)}}{2 \kappa}
\end{equation*}
which holds if
\begin{equation*}
    \|\phi(S_{t+1})-\phi(H_{t+1})\| \leq \frac{ G^{(t+1)}}{2 \sqrt{2} \kappa}
\end{equation*}
and for that suffices
\begin{equation*}
    \|\phi(S_{t+1})-\phi(H_t)\| \leq \sqrt{d} \left( \frac{1}{2 \kappa}+\frac{10}{1+\alpha} \right) G^{(t+1)}.
\end{equation*}
We now have
\begin{align*}
    \|\phi(S_{t+1})-\phi(H_t)\| &\leq \sqrt{d} \|S_{t+1}-H_t\| \leq \sqrt{d} \|S_{t+1}-\nabla^2 f(x^{(t+1)})+\nabla^2 f(x^{(t+1)})-\nabla^2 f(x^{(t)})+\nabla^2 f(x^{(t)})-H_t\| \\ & \leq \sqrt{d} (\|S_{t+1}-\nabla^2 f(x^{(t+1)})\|+\|\nabla^2 f(x^{(t+1)})-\nabla^2 f(x^{(t)})\|+\|\nabla^2 f(x^{(t)})-H_t\|) \\ & \leq \sqrt{d} \left (\frac{ G^{(t+1)}}{2 \kappa}+\sigma \|x^{(t+1)}-x^{(t)}\|+\frac{G^{(t)}}{\kappa} \right) \leq \sqrt{d} \frac{G^{(t+1)}}{2 \kappa}+\frac{10 \sqrt{d}}{1+\alpha} G^{(t+1)} \\ & =\sqrt{d} \left( \frac{1}{2 \kappa}+\frac{10}{1+\alpha} \right) G^{(t+1)}
\end{align*}
which concludes the induction.

For the fourth inequality it suffices to prove
\begin{equation*}
    \|H_{t+1}^{-1} \nabla f_i(x^{(t+1)})-v^{(t)}_i\| \leq 11 \kappa P^{(t+1)}.
\end{equation*}

To that end, we use $\gamma$-smoothness of $f_i$, the bound $\| x^{(t)}-x_i^* \| \leq \| x^{(t)}-x^* \|+\|x^*-x_i^*\| \leq \|x^{(0)}-x^*\|+\|x^{(0)}-x^*\|+\|x^{(0)}-x_i^*\| \leq \frac{3 \mu}{2 \sigma} \alpha $, the fact that $\|H_t^{-1}\|,\|H_{t+1}^{-1}\| \leq \frac{2}{\mu}$ and the induction hypothesis. Also, we use that $alpha<1$, $\kappa \geq 1$ and $K \geq 2$. 

Indeed, we have
\begin{align*}
&\|H_{t+1}^{-1} \nabla f_i(x^{(t+1)})-v^{(t)}_i\|  =   \|H_{t+1}^{-1} \nabla f_i(x^{(t+1)})-H_{t+1}^{-1} \nabla f_i(x^{(t)})+ H_{t+1}^{-1} \nabla f_i(x^{(t)})- H_t^{-1} \nabla f_i(x^{(t)}) + H_t^{-1} \nabla f_i(x^{(t)})- v^{(t)}_i \| \\ &\leq
\|H_{t+1}^{-1} \nabla f_i(x^{(t+1)})-H_{t+1}^{-1} \nabla f_i(x^{(t)})\|+\|H_{t+1}^{-1}-H_t^{-1}\| \| \nabla f_i(x^{(t)})\| + \| H_t^{-1} \nabla f_i(x^{(t)})-v^{(t)}_i \| \\ & \leq \frac{2}{\mu} \gamma \|x^{(t+1)}-x^{(t)}\|+ \|H_{t+1}^{-1}\| \| H_t^{-1}\| \| H_{t+1}-H_t\| \gamma_i \| x^{(t)}-x_i^*\|+ \theta P^{(t)} \\ & \leq
2 \frac{\gamma}{\mu} \|x^{(t+1)}-x^{(t)}\| +\frac{4}{\mu^2} (\| H_{t+1}-\nabla^2 f(x^{(t+1)}) \|+\|\nabla^2 f(x^{(t+1)}) -\nabla^2 f(x^{(t)})\|+\|\nabla^2 f(x^{(t)})-H_t\|) \gamma \frac{3 \mu}{2 \sigma} \alpha +\theta P^{(t)} \\ & \leq 2 \frac{\gamma}{\mu} \|x^{(t+1)}-x^{(t)}\|+\frac{4}{\mu^2} \left(\frac{G^{(t+1)}}{\kappa}+\frac{G^{(t)}}{\kappa} + \sigma \|x^{(t+1)}-x^{(t)}\| \right) \gamma \frac{3\mu}{2 \sigma} \alpha+ \theta P^{(t)} \\ & \leq
4 \kappa \frac{\mu}{2 \sigma} \alpha \left(\frac{1+\alpha}{2} \right)^t+ \frac{4}{\mu^2} \left(2 \frac{\mu}{4 \kappa} \alpha \left(\frac{1+\alpha}{2} \right)^t + 2 \sigma \frac{\mu}{2 \sigma} \alpha \left(\frac{1+\alpha}{2} \right)^t \right) \gamma \frac{3\mu}{2 \sigma} \alpha + \theta \frac{\mu}{2 \sigma} K \alpha \left( \frac{1+\alpha}{2}\right)^t  \\ & =
4 \kappa \frac{\mu}{2 \sigma} \alpha \left(\frac{1+\alpha}{2} \right)^t + \frac{12 \gamma}{\mu^2} \left( \frac{\mu}{2 \kappa} \alpha + \mu \alpha \right) \frac{\mu}{2 \sigma} \alpha \left(\frac{1+\alpha}{2} \right)^t + \theta \frac{\mu}{2 \sigma} K \alpha \left( \frac{1+\alpha}{2}\right)^t \\ & = 4 \kappa \frac{\mu}{2 \sigma} \alpha \left(\frac{1+\alpha}{2} \right)^t + 12 \kappa  \left( \frac{\alpha}{2 \kappa} + \alpha \right) \frac{\mu}{2 \sigma} \alpha \left(\frac{1+\alpha}{2} \right)^t + \theta \frac{\mu}{2 \sigma} K \alpha \left( \frac{1+\alpha}{2}\right)^t
\\ & \leq (4 \kappa+ 6+12\kappa+ \theta K) \frac{\mu}{2 \sigma} \alpha \left(\frac{1+\alpha}{2} \right)^t \leq (22 \kappa+\theta K) \frac{\mu}{2 \sigma} \alpha \left(\frac{1+\alpha}{2} \right)^t \leq 11\kappa(2/K+\theta)K \frac{\mu}{2 \sigma} \alpha \left(\frac{1+\alpha}{2} \right)^t\\&=11 \kappa K\alpha \frac{\mu}{2 \sigma} \left(\frac{1+\alpha}{2} \right)^{t+1} \leq 11 \kappa P^{(t+1)}.
\end{align*}

For the third inequality, we have
\begin{equation*}
    \|H_{t+1}^{-1} \nabla f(x^{(t+1)})-p^{(t+1)}\| \leq \frac{1}{n} \sum_{i=1}^n \| H_{t+1}^{-1} \nabla f_i(x^{(t+1)})- v^{(t+1)}_i\| \leq \frac{\theta P^{(t+1)}}{2}.
\end{equation*}
We want to prove also that
\begin{equation*}
    \|p^{(t+1)}-v^{(t+1)}\| \leq \frac{\theta P^{(t+1)}}{2}.
\end{equation*}
For that it suffices to show that
\begin{align*}
    \|p^{(t+1)}-v^{(t)}\| \leq \left(\frac{\theta}{2}+11 \kappa \right) P^{(t+1)}.
\end{align*}
We have
\begin{align*}
    \|p^{(t+1)}-v^{(t)}\| &\leq \|p^{(t+1)}-H_{t+1}^{-1} \nabla f(x^{(t+1)})+H_{t+1}^{-1} \nabla f(x^{(t+1)})-H_{t+1}^{-1} \nabla f(x^{(t)})\\ &+H_{t+1}^{-1} \nabla f(x^{(t)})-H_t^{-1} \nabla f(x^{(t)})+H_t^{-1} \nabla f(x^{(t)})-v^{(t)}\| \\ & \leq
    \|p^{(t+1)}-H_{t+1}^{-1} \nabla f(x^{(t+1)})\|+\|H_{t+1}^{-1} \nabla f(x^{(t+1)})-H_{t+1}^{-1} \nabla f(x^{(t)})\|\\&+\|H_{t+1}^{-1} \nabla f(x^{(t)})-H_t^{-1} \nabla f(x^{(t)})\|+\|H_t^{-1} \nabla f(x^{(t)})-v^{(t)}\| \\ & \leq \frac{\theta P^{(t+1)}}{2} + \frac{2}{\mu} \gamma \|x^{(t+1)}-x^{(t)}\|+ \|H_{t+1}^{-1}\| \| H_t^{-1}\| \| H_{t+1}-H_t\| \| \nabla f(x^{(t)})\|+ \theta P^{(t)} \\ & \leq \left(\frac{\theta}{2}+11 \kappa \right) P^{(t+1)} 
\end{align*}
which completes the induction by the same argument as in the previous derivation.
\end{proof}

\maintheorem*

\begin{proof}
The claim about the convergence of the iterates follows easily by applying Lemma \ref{le:desc_direction_newton} with $\alpha=\frac{1}{2}$.
\newline
This means that we achieve $\|x^{(t)}-x^*\| \leq \epsilon$ in at most
\begin{equation*}
    t=\frac{1}{1-\frac{3}{4}} \log \frac{ \frac{\mu}{4 \sigma} }{\epsilon}=4  \log \frac{\mu}{4 \sigma \epsilon}
\end{equation*}
many iterates.
We have $f(x^{(t)})-f^* \leq \epsilon$, if $\|x^{(t)}-x^*\| \leq \sqrt{\frac{2 \epsilon}{\gamma}}$, thus in at most
\begin{equation*}
    t=4 \log \frac{\gamma \mu^2}{32 \sigma^2 \epsilon}
\end{equation*}
many iterates.
\newline
For the communication cost, we have that in order to pursue Hessian quantization at $t=0$, we need
\begin{equation*}
    \bigO\left(n \frac{d(d+1)}{2} \log \frac{2 \sqrt{d} \gamma}{G^{(0)}/2\sqrt{2} \kappa}  \right) = \bigO\left(n \frac{d(d+1)}{2} \log \frac{2 \sqrt{d} \gamma}{\frac{\mu^2}{16 \sqrt{2} \gamma}}  \right) = \bigO\left(n \frac{d(d+1)}{2} \log \frac{\sqrt{d} \gamma^2}{\mu^2}  \right) \leq \bigO\left(n d^2 \log \left( \sqrt{d} \kappa  \right) \right)
\end{equation*}
many bits for encoding the local Hessian matrices
\begin{equation*}
    \bigO \left(n \frac{d(d+1)}{2} \log \frac{\sqrt{d} \left( \frac{1}{2 \kappa} G^{(0)} + 2 \gamma \right)}{ G^{(0)}/2\sqrt{2} \kappa} \right)= \bigO \left(n \frac{d(d+1)}{2} \log  \frac{2 \sqrt{d} \gamma} { G^{(0)}/2\sqrt{2} \kappa} \right) \leq \bigO\left(n d^2 \log \left( \sqrt{d} \kappa  \right) \right)
\end{equation*}
for decoding their sum back to all machines (this is because $\frac{1}{2\kappa} G^{(0)} \leq 2 \gamma$). Thus the total communication cost for Hessian quantization at $t=0$ is
\begin{equation*}
    \bigO\left(n d^2 \log \left( \sqrt{d} \kappa  \right) \right).
\end{equation*}
For $t \geq 1$, we have that the cost for quantizing the local Hessians is
\begin{equation*}
    \bigO \left(n \frac{d(d+1)}{2} \log\frac{10 \sqrt{d} G^{(t+1)}/(1+\alpha)}{G^{(t+1)}/2 \sqrt{2} \kappa} \right)= \bigO \left(n \frac{d(d+1)}{2} \log\frac{10 \sqrt{d} /(1+\alpha)}{1/2 \sqrt{2} \kappa} \right)=\bigO \left(n d^2 \log \left( \sqrt{d} \kappa \right) \right)
\end{equation*}
and for communicating the sum back to all machines is
\begin{equation*}
    \bigO \left(n \frac{d(d+1)}{2} \log\frac{\sqrt{d}(1/2 \kappa+10/(1+\alpha)) G^{(t+1)}}{G^{(t+1)}/2 \sqrt{2} \kappa} \right)=\bigO \left(n \frac{d(d+1)}{2} \log\frac{10 \sqrt{d}/(1+\alpha) }{1/2 \sqrt{2} \kappa} \right)=\bigO \left(n d^2 \log \left( \sqrt{d} \kappa \right) \right),
\end{equation*}
again because $1/2 \kappa \leq 10/ (1+\alpha)$.
\newline
Thus the total cost of Hessian quantization along the whole optimization process until reaching accuracy $\epsilon$ is
\begin{equation*}
    b_m=\bigO \left(n d^2 \log \left( \sqrt{d} \kappa \right) \log \frac{\gamma \mu^2}{32 \sigma^2 \epsilon} \right)
\end{equation*}
many bits in total.
\newline
On the other hand, the cost of quantizing the local descent directions at $t\geq0$ is
\begin{equation*}
    \bigO \left(nd \log \frac{11 \kappa P^{(t)}}{\frac{\theta P^{(t)}}{2}} \right)= \bigO \left(nd\log \kappa \right)
\end{equation*}
because now $\theta$ is just $\frac{1}{16}$. The cost of sending the average of the quantized local directions back to any machine is
\begin{equation*}
    \bigO \left(nd \log \frac{(\theta/2+11
    \kappa)P^{(t)}}{\theta P^{(t)}/2} \right)=\bigO \left(nd \log \frac{11 \kappa P^{(t)}}{\frac{\theta P^{(t)}}{2}} \right)=\bigO \left(nd\log \kappa \right),
\end{equation*}
because $\frac{\theta}{2} \leq 11 \kappa$. Thus, the total communication cost for quantizing the descent directions until reaching accuracy $\epsilon$ is
\begin{equation*}
    b=\bigO \left(nd\log \kappa \log \frac{\gamma \mu^2}{32 \sigma^2 \epsilon} \right)
\end{equation*}
many bits.
\newline
The total communication cost of Quantized Newton's method overall is
\begin{equation*}
    \bigO \left(n d^2 \log \left( \sqrt{d} \kappa \right) \log \frac{\gamma \mu^2}{32 \sigma^2 \epsilon} \right)+\bigO \left(nd\log \kappa \log \frac{\gamma \mu^2}{32 \sigma^2 \epsilon} \right) = \bigO\left(nd^2 \log \left( \sqrt{d} \kappa \right) \log \frac{\gamma \mu^2}{\sigma^2 \epsilon} \right).
\end{equation*}
\end{proof}

\section{Estimation of the Minimum}
\label{app:function_value}

\functionvalue*

\begin{proof}
We have that
\begin{align*}
    &\mid f_i(x^{(t)})-f_{i_0}(x^{(t)}) \mid \leq \mid f_i(x^{(t)}) \mid + \mid  f_{i_0}(x^{(t)}) \mid \leq \frac{\gamma}{2} \|x^{(t)}-x_i^*\|^2 + \mid f_i^* \mid+ \frac{\gamma}{2} \|x^{(t)}-x_{i_0}^*\|^2+\mid f_{i_0}^* \mid
\end{align*}
In order $x^{(t)}$ to satisfy $f(x^{(t)})-f^* \leq \frac{\epsilon}{2}$, we compute $x^{(t)}$ by our main algorithms, such that $\|x^{(t)}-x^*\| \leq \sqrt{\frac{\epsilon}{\gamma}}$. This gives the respective communication complexities from the previous sections.
\newline
Given that, we can write
\begin{align*}
    &\|x^{(t)}-x_i^*\|^2 =\|x^*-x_i^*\|^2 + \|x^{(t)}-x^*\|^2+2 \langle x^*-x_i^*, x^{(t)}-x^* \rangle \leq \\ &  \|x^*-x_i^*\|^2 + \|x^{(t)}-x^*\|^2+2 \| x^*-x_i^* \| \| x^{(t)}-x^* \|  \leq \|x^*-x_i^*\|^2+\frac{\epsilon}{\gamma}+\sqrt{\frac{\epsilon}{\gamma}} \|x^*-x_i^*\| \leq \\ & C^2 +\frac{\epsilon}{\gamma}+\sqrt{\frac{\epsilon}{\gamma}} C \leq 2 C^2
\end{align*}
for sufficiently small $\epsilon$. Similarly we have 
\begin{equation*}
    \|x^{(T)}-x_{i_0}^*\|^2 \leq 2 C^2
\end{equation*}
for small $\epsilon$.
\newline
Thus 
\begin{equation*}
    \mid f_i(x^{(t)})-f_{i_0}(x^{(t)}) \mid \leq 2 (\gamma C^2+c)
\end{equation*}
and by the definition of the quantization, we have
\begin{equation*}
    \mid q_i^{(t)}-f_i(x^{(t)}) \mid \leq \frac{\epsilon}{2}.
\end{equation*}
which implies
\begin{equation*}
    \mid \bar f-f(x^{(t)}) \mid \leq \frac{1}{n} \sum_{i=1}^n \mid q_i^{(t)}-f_i(x^{(t)}) \mid \leq \frac{\epsilon}{2}.
\end{equation*}
Overall, we get
\begin{equation*}
    \bar f-f^* \leq \mid \bar f-f(x^{(t)}) \mid + f(x^{(t)})-f^* \leq \frac{\epsilon}{2}+\frac{\epsilon}{2}= \epsilon.
\end{equation*}
The communication cost for quantizing $f_i(x^{(t)})$ is
\begin{equation*}
    \bigO \left(n \log \frac{\gamma C^2+c}{\epsilon} \right)
\end{equation*}
since we quantize real numbers, which are $1$-dimensional, and we need to communicate $n$-times.
\end{proof}

\end{document}